\title[Critical Lyapunov exponent]{On the Lyapunov exponent for the random field Ising transfer matrix, in the critical case}
\numberwithin{equation}{section}
\newtheorem{theorem}{Theorem}[section]
\newtheorem{lemma}[theorem]{Lemma}
\newtheorem{proposition}[theorem]{Proposition}
\newtheorem{cor}[theorem]{Corollary}
\newtheorem{corollary}[theorem]{Corollary}
\newtheorem{rem}[theorem]{Remark}
\DeclareMathOperator{\p}{\mathbb{P}}
\DeclareMathOperator{\e}{\mathbb{E}}
\def\law{\buildrel {\rm (law)} \over {=}} 
\newcommand{\ind}{\mathbf{1}}
\newcommand{\R}{\mathbb{R}}
\renewcommand{\tilde}{\widetilde}
\newcommand{\cB}{{\ensuremath{\mathcal B}} }
\newcommand{\cN}{{\ensuremath{\mathcal N}} }
\newcommand{\cM}{{\ensuremath{\mathcal M}} }
\newcommand{\cL}{{\ensuremath{\mathcal L}} }
\newcommand{\cT}{{\ensuremath{\mathcal T}} }
\newcommand{\bN}{{\ensuremath{\mathbf N}} }
\DeclareMathSymbol{\leqslant}{\mathalpha}{AMSa}{"36} 
\DeclareMathSymbol{\geqslant}{\mathalpha}{AMSa}{"3E} 
\DeclareMathSymbol{\eset}{\mathalpha}{AMSb}{"3F}     
\renewcommand{\leq}{\;\leqslant\;}                   
\newcommand{\dd}{\text{\rm d}}             
\newcommand{\bbE}{{\ensuremath{\mathbb E}} }
\newcommand{\bbL}{{\ensuremath{\mathbb L}} }
\newcommand{\bbP}{{\ensuremath{\mathbb P}} }
\newcommand{\bbR}{{\ensuremath{\mathbb R}} }
\newcommand{\ga}{\alpha}
\newcommand{\gd}{\delta}
\newcommand{\gep}{\varepsilon}       
\newcommand{\gz}{\zeta}
\newcommand{\gG}{\Gamma}
\newcommand{\gs}{\sigma}
\def\captionfont@{\footnotesize}
\def\captionheadfont@{\scshape}
\long\def\@makecaption#1#2{%
  \vspace{2mm}
  \setbox\@tempboxa\vbox{\color@setgroup
    \advance\hsize-6pc\noindent
    \captionfont@\captionheadfont@#1\@xp\@ifnotempty\@xp
        {\@cdr#2\@nil}{.\captionfont@\upshape\enspace#2}%
    \unskip\kern-6pc\par
    \global\setbox\@ne\lastbox\color@endgroup}%
  \ifhbox\@ne 
    \setbox\@ne\hbox{\unhbox\@ne\unskip\unskip\unpenalty\unkern}%
  \fi
  \ifdim\wd\@tempboxa=\z@ 
    \setbox\@ne\hbox to\columnwidth{\hss\kern-6pc\box\@ne\hss}%
  \else 
    \setbox\@ne\vbox{\unvbox\@tempboxa\parskip\z@skip
        \noindent\unhbox\@ne\advance\hsize-6pc\par}%
\fi
  \ifnum\@tempcnta<64 
    \addvspace\abovecaptionskip
    \moveright 3pc\box\@ne
  \else 
    \moveright 3pc\box\@ne
    \nobreak
    \vskip\belowcaptionskip
  \fi
\relax
}
\def\writefig#1 #2 #3 {\rlap{\kern #1 truecm
\raise #2 truecm \hbox{#3}}}
\newcommand{\logZ}{\mathtt{z}}
\newcommand{\hup}{{\bf  h}}
\newcommand{\lar}{\vartriangleleft}
\newcommand{\rar}{\vartriangleright}
\newcommand{\cfromQM}{75}
\begin{document}

\author[O.\ Collin, G.\ Giacomin, R.L.\ Greenblatt and Y.\ Hu]{Orph\'ee Collin, Giambattista Giacomin, Rafael L.\ Greenblatt and Yueyun Hu}
\address[OC]{Universit\'e  Paris Cit\'e,  Laboratoire de Probabilit{\'e}s, Statistiques  et Mod\'elisation, UMR 8001,
            F-75205 Paris, France}
            \address[GG, corresponding author]{Universit\`a di Padova,    Dipartimento di Matematica ``Tullio Levi-Civita'', Via Trieste 63, I-35121 Padova, Italy; e-mail address giacomin@math.unipd.it
       }
	    \address[RLG]{Universit\`a di Roma ``Tor Vergata'', Dipartimento di Matematica, Rome, Italy
	    }
    \address[YH]{Universit\'e Paris XIII, LAGA,  Institut Galil\'ee, F-93430 Villetaneuse, France}

   \begin{abstract} 
	   We study the top Lyapunov exponent of a product of random $2 \times 2$ matrices appearing in the analysis of several statistical mechanical models with disorder, extending a previous treatment of the critical case (Giacomin and Greenblatt, ALEA {\bf 19} (2022), 701-728) by significantly weakening the assumptions on the disorder distribution. 
The argument we give completely revisits and improves the previous proof. As a key novelty we 
 build a probability that is close to the Furstenberg probability, i.e.\ the invariant probability of the Markov chain corresponding to the evolution of  the direction of a vector in $\R^2$ under the  action of the random matrices,
	     in terms of the ladder times of a centered  random walk which is  directly related to the   random matrix sequence. We then  show that sharp estimates   on the ladder times (renewal) process  lead to a sharp control on the probability measure we build and, in turn, to the control of its distance from the Furstenberg probability.
	    \bigskip

\noindent  \emph{AMS  subject classification (2010 MSC)}:
60K37,  
82B44, 
60K35, 
37H15  	

\smallskip
\noindent
\emph{Keywords}: product of random matrices, singular behavior of Lyapunov exponents, disordered systems, renewal theory 
    \end{abstract}
  
    \date{\today} 
   
\maketitle



\section{Introduction}
\label{sec:setup}

\subsection{The model we consider}
Let
\begin{equation}
\label{eq:keyM}
M\, =\,M(\gep, Z)\, :=\, 
\begin{pmatrix}
1& \gep\\ 
\gep Z  & Z
\end{pmatrix}\, , 
\end{equation}
with $\vert \gep \vert < 1$ and $Z \in (0, \infty)$. 
Consider now an IID sequence of positive random variables $(Z_j)_{j=1,2, \ldots}$ with $\bbE[\vert \log Z_1\vert ] < \infty$ and let us assume that the distribution of $Z$ is not trivial. 
The top Lyapunov exponent of the product of the random matrices $(M(\gep, Z_j))_{j=1,2, \ldots}$ is
\begin{equation}
\label{eq:L_Z}
	\cL_Z(\gep)\, :=\, \lim_{n \to \infty} \frac 1 n \bbE \log \left \Vert M(\gep , Z_1) M(\gep , Z_2) \cdots  M(\gep , Z_n)\right \Vert\, ,
\end{equation}
where $\Vert \cdot \Vert$ is an arbitrary matrix norm. $\cL_Z(\gep)$ is well defined and independent of the matrix norm. Existence of the limit  is a straightforward consequence of 
a well-known property of subadditive sequences (Fekete's Lemma) that can be applied  if $\Vert \cdot \Vert$ is  a submultiplicative norm.
The fact that
all matrix norms are equivalent then yields that $\cL_Z(\gep)$ does not depend on the choice of the norm. This result holds  for every $\gep\in \bbR$, however it will be soon important to focus on invertible matrices, i.e.\ $\vert \gep \vert \neq 1$ 
and we are interested in the behavior of $\cL_Z(\gep)$ when $\gep $ approaches 0, that is why  we restrict to $\vert \gep \vert < 1$. 

Let us remark that if $D$ is the diagonal matrix with $(1,-1)$ on the diagonal, we have $D=D^{-1}$ and 
$M(-\gep,Z)= D M(\gep, Z) D$, which implies  $\cL_Z(\gep)=\cL_Z(-\gep)$ and so without loss of generality we may consider only $\gep \in [0,1)$. 
Moreover 
\smallskip 

\begin{enumerate}[leftmargin=0.7 cm]
\item 
 it is straightforward  
to verify that the conditions of Furstenberg Theorem  (e.g., \cite[Th.~4.1, p.~30]{cf:BL})
are verified for $\gep \in (0,1)$ for   the sequence of unit determinant matrices $(M(\gep, Z_j)/\sqrt{(1-\gep^2) Z_j})_{j=1,2, \ldots}$
This has a number of consequences, in particular $\gep \mapsto \cL_Z(\gep)$ is $C^0$ and that there is a formula for $\cL_Z(\gep)$ that we give and (crucially) exploit below.
\item For $\gep \in (0,1)$ the problem we consider falls into the cone contraction theory of \cite{cf:Ruelle} and $\gep \mapsto  \cL_Z(\gep)$ is real analytic.
\end{enumerate}
\smallskip

The case $\gep=0$ is trivial in the sense that we explicitly compute $\cL_Z(0)= \max(0, \bbE[ \log Z])$. But the case $\gep=0$ fails to satisfy the hypotheses
of Furstenberg Theorem: in fact the irreducibility property   \cite[Th.~4.1]{cf:BL} fails because $M(0, Z)v\propto v$ both for 
$v=(1,0)^t$ and for $v=(0,1)^t$. So $\gep=0$ is a singular point in the sense that Furstenberg Theorem cannot be applied when $\gep=0$ and this motivates, at least from
the mathematical viewpoint, the study of $\cL_Z(\gep)$ as $\gep \to 0$: we will soon see that the motivation goes well beyond this. 

We notice that if furthermore $\bbE[ \log Z]= 0$, then not only the matrix is diagonal and fails the irreducibility property, but the exponential growth rate of the  terms on the diagonal 
is the same (zero in both cases), that suggests that this case is particularly singular (lack of hyperbolicity). The  $\bbE[ \log Z]= 0$ case is naturally referred as critical: we will come back to discuss and confirm this aspect in Sec.~\ref{sec:discussion}.

A change of perspective on the problem is notationally useful, particularly in the critical case: we set 
\begin{equation}
\gG \, :=\, -\log \gep\ \ \text{ and } \ \ \ \logZ\,:=\, \log Z\,,
\end{equation}
so 
  the original matrix $M$ of \eqref{eq:keyM}
becomes
 \begin{equation}
\label{eq:matrix2}
\begin{pmatrix}
1& \exp(-\gG )\\ 
\exp(-\gG +\logZ)  & \exp(\logZ)
\end{pmatrix}\, ,
\end{equation}
and we set 
\begin{equation}
\cL(\gG)\, :=\, \cL_Z(\gep)\, .
\end{equation}
In analogy with before, $\logZ_j= \log Z_j$, so $(\logZ_j)_{j=1,2, \ldots}$ is an IID sequence. With this notation the critical case means
\begin{equation}
\label{eq:critical}
\logZ\in \bbL^1
\ \ \text{ and } \ \ \bbE[\logZ]\,=\, \int_\bbR x \zeta(\dd x)\, =\, 0\,,
\end{equation}
where $\zeta $ is the law of $\logZ$. 

\subsection{Our main result}
\label{sec:mainresult}

In \cite{cf:GG22} two of the authors proved the following result: 
\medskip

\begin{theorem}
\label{th:main-old}
Choose $\zeta$ that satisfies 
 \eqref{eq:critical} and such that
 \begin{description}
		\item[{(h-1)}]  $\zeta$ has a density and its density is  H\"older continuous uniformly over $\bbR$;
\item[{(h-2)}] there exists $c>0$ such that $\int_\bbR \exp( c \vert x\vert) \zeta(\dd x) < \infty$.
\end{description}
Then there exists $\kappa_1>0$, $\kappa_2 \in \bbR$  and $\gd >0$ such that
\begin{equation}
\label{eq:main-old}
\cL(\gG) \stackrel{\gG \to \infty}= \frac{\kappa_1}{\gG+ \kappa_2} + O \left( \exp\left( - \gd \gG \right)\right)\,.
\end{equation}
\end{theorem}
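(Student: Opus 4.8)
The plan is to reduce the statement to a description, precise up to a relative error $O(e^{-\gd\gG})$, of the Furstenberg probability $\nu_\gG$, and then to read the asymptotics off from the behaviour of $\nu_\gG$ in an $O(1)$-window around one endpoint of its (essential) support.

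First I would set up the projective chain. For $\gep=e^{-\gG}\in(0,1)$ and $Z>0$ the matrix \eqref{eq:matrix2} has positive entries, so the open positive cone is invariant and $\nu_\gG$ is carried by its image in the projective line, which I coordinatise by $u=\log(v_2/v_1)\in\bbR$. One step of the Markov chain on directions then reads
\begin{equation}
\label{eq:chainsketch}
u_{n+1}=\psi_\gG(u_n)+\logZ_{n+1}\,,\qquad \psi_\gG(u):=u+\log\frac{1+e^{-(u+\gG)}}{1+e^{u-\gG}}\,,
\end{equation}
with $(\logZ_j)$ the IID centred sequence of \eqref{eq:critical}; one checks that $\psi_\gG$ is increasing, maps $\bbR$ into $(-\gG,\gG)$, is exponentially close to the identity in the bulk — the deviation $\psi_\gG(u)-u$ is exponentially small in the distance from $u$ to the nearer endpoint — and acts as a soft reflection within an $O(1)$-window of each endpoint: $\psi_\gG(-\gG+v)=-\gG+\log(1+e^{v})+O(e^{-\gG})$ for bounded $v$, and symmetrically at $+\gG$. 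Using the growth of the first coordinate as a gauge, the Furstenberg formula gives $\cL(\gG)=\int\log(1+e^{u-\gG})\,\nu_\gG(\dd u)$; since $\log(1+e^{u-\gG})=\bigl(u-\psi_\gG(u)\bigr)+\log(1+e^{-(u+\gG)})$ and $\psi_\gG(u_n)=u_{n+1}-\logZ_{n+1}$, the first summand telescopes and, because $\bbE[\logZ]=0$ and $(u_n)$ is tight, contributes nothing to $\lim_N\frac1N\sum_{n\le N}(\cdot)$, so that
\begin{equation}
\label{eq:Lbottom}
\cL(\gG)=\int_\bbR\log\bigl(1+e^{-(u+\gG)}\bigr)\,\nu_\gG(\dd u)\,,
\end{equation}
an identity whose integrand is exponentially localised in $\{u+\gG=O(1)\}$, i.e.\ near the bottom endpoint.

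The heart of the argument would be the description of $\nu_\gG$, carried out as follows. Away from both endpoints the only measure (up to a scalar) left invariant by a zero-mean random-walk kernel is Lebesgue, so $\nu_\gG$ has a density equal to a constant $A_\gG$ in the bulk up to $O(e^{-\gd\gG})$. After the shift $v=u+\gG$, the chain \eqref{eq:chainsketch} near the bottom is an $O(e^{-\gd\gG})$-perturbation of the soft-reflected walk $v_{n+1}=\log(1+e^{v_n})+\logZ_{n+1}$ on (essentially) $[0,\infty)$, whose $\sigma$-finite invariant measure $\varpi$ I would build — in the renewal/ladder-time spirit of this paper — from the ladder structure of $S_n:=\sum_{j\le n}\logZ_j$ and the excursions (meanders) between consecutive ladder epochs; the crucial input is the renewal statement $\varpi(\dd v)=\bigl(1+O(e^{-\gd v})\bigr)\,\dd v$ with a \emph{geometric} rate, for which hypothesis (h-2) supplies the exponential moments behind that rate and hypothesis (h-1) supplies the spread-out/regularity needed for a quantitative (Stone-type) renewal estimate. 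Matching the bulk value then forces $\nu_\gG$ to coincide, near the bottom, with $A_\gG\,\varpi$ and, near the top, with $A_\gG\,\widetilde\varpi$ (the analogous object for the walk with increments $-\logZ$) up to $O(e^{-\gd\gG})$, and normalisation gives $1=A_\gG\bigl(2\gG+\kappa_2'\bigr)+O(e^{-\gd\gG})$ with $\kappa_2'$ a finite constant depending only on $\zeta$ (the total ``excess'', relative to Lebesgue, carried by $\varpi$ and $\widetilde\varpi$ near their reflecting ends); hence $A_\gG=(2\gG+\kappa_2')^{-1}+O(e^{-\gd\gG})$.

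Inserting this into \eqref{eq:Lbottom} and using that $\log(1+e^{-v})$ is exponentially small for $v\ge0$ (so that only the bottom layer contributes) gives
\begin{equation}
\cL(\gG)=A_\gG\int_0^\infty\log(1+e^{-v})\,\varpi(\dd v)+O(e^{-\gd\gG})=\frac{\kappa_1}{\gG+\kappa_2}+O(e^{-\gd\gG})\,,
\end{equation}
with $\kappa_1:=\tfrac12\int_0^\infty\log(1+e^{-v})\,\varpi(\dd v)>0$ and $\kappa_2:=\kappa_2'/2$; as a check, if $\varpi(\dd v)\approx\dd v$ one gets $\kappa_1\approx\pi^2/24$. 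The hard part will be exactly the step ``$\nu_\gG\approx$ bulk $+$ two boundary layers, with seams of size $O(e^{-\gd\gG})$'': $\nu_\gG$ is only implicitly defined and the bulk relaxes on the diffusive time scale $\gG^2$, so soft mixing is far too weak; one must instead combine the exact random-walk structure in the bulk, the geometric ergodicity of the ladder renewal process at each endpoint (from (h-1)--(h-2)), and a projective-contraction bound to pin $\nu_\gG$ down, and it is the control of every renewal error at geometric — rather than merely $o(1)$ — rate that upgrades the remainder to the exponentially small $O(e^{-\gd\gG})$.
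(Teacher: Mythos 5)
Your plan follows essentially the same route as the paper: an edge-localised Furstenberg formula (your telescoping identity is the paper's \eqref{eq:Lyap-k2}, cf.\ Remark~\ref{rem:sym1}), the construction of the two $\sigma$-finite edge invariant measures by ladder epochs and quantitative (Stone/Rogozin-type) renewal estimates with exponential rate under {\bf (h-1)}--{\bf (h-2)} (this is \Cref{thm:stationary_asymptotics}), the patching and normalisation by $2\gG+2\kappa_2$ (the Derrida--Hilhorst probability $\gamma_\gG$ of \eqref{eq:DHprobability} and \eqref{eq:Ck}), and finally a projective-contraction bound to transfer the information to $\nu_\gG$; your formula $\kappa_1=\tfrac12\int\log(1+e^{-v})\varpi(\dd v)$ is exactly \eqref{eq:kappa_1_formula}.

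The one place where your formulation diverges from what can actually be proved this way is the local description of $\nu_\gG$: the claim that $\nu_\gG$ has a density equal to a constant $A_\gG$ in the bulk up to $O(e^{-\gd\gG})$, with boundary layers $A_\gG\varpi$, $A_\gG\tilde\varpi$ matched across ``seams'', is a pointwise statement about $\nu_\gG$ that the Choquet--Deny heuristic does not deliver ($\nu_\gG$ is only approximately walk-invariant on a finite window, and the chain relaxes on the diffusive scale $\gG^2$, so approximate invariance alone gives no exponential flatness). The paper never establishes, nor needs, any local structure of $\nu_\gG$: it works with the global Wasserstein-1 (equivalently $\bbL^1$ of risk functions) comparison of \Cref{th:bound-cor}, namely $\Vert G_{\nu_\gG}-G_{\gamma_\gG}\Vert_1\le \frac{\cfromQM}{\gs^2}\gG^2\,\Vert T_\gG G_{\gamma_\gG}-G_{\gamma_\gG}\Vert_1$, where the resolvent bound $\sum_n\Vert T_{\gG,0}^n\Vert\le C\gG^2$ (\Cref{thm:contractive}) is compensated by the exponentially small one-step defect of $\gamma_\gG$ (\Cref{thm:onestep}); since the integrand $\log(1+e^{-(u+\gG)})$ is $1$-Lipschitz, this transport-distance control suffices, and the normalisation $A_\gG=(2\gG+2\kappa_2)^{-1}$ comes for free from the definition of $\gamma_\gG$ rather than from a matching argument. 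So: replace ``bulk $+$ boundary layers with exponentially small seams'' by ``explicit patched probability, exponentially small one-step defect, multiplied by the $\gG^2$ contraction constant'', and your outline coincides with the paper's proof; as you anticipate, the genuinely hard inputs are the exponential-rate renewal estimate for the ladder process (where {\bf (h-1)} gives spread-outness and {\bf (h-2)} the exponential moments) and the quantitative contraction step.
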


\medskip

Here we prove the following result: for $n=1,2, \ldots $ denote by $\zeta^{*n}$ the law of $\logZ_1+ \ldots + \logZ_n$.
\medskip

\begin{theorem}
\label{th:main}
Choose $\zeta$ that satisfies 
 \eqref{eq:critical} and such that
 \begin{description}
		\item[{(H-1)}]  there exists $n_0\ge 1$ such that $\zeta^{*n_0}$ has a density;
	\end{description}
and either
 \begin{description}
		\item[{(H-2)}] there exists $\xi > 5$ such that $\int_\bbR \vert x\vert ^\xi \zeta (\dd x) < \infty$;
	\end{description}
	or
	\begin{description}
		\item[{(H-2$^\prime$)}] there exists $c>0$ such that $\int_\bbR \exp\left(c \vert x\vert\right) \zeta (\dd x) < \infty$.
	\end{description}
Then there exists $\kappa_1>0$ and $\kappa_2 \in \bbR$  such that 
\begin{equation}
\label{eq:main}
\cL(\gG) \stackrel{\gG \to \infty}= \frac{\kappa_1}{\gG+ \kappa_2} + \textsc{R}(\gG)  \, .
\end{equation}
where for $\gG\to \infty$
\begin{equation}
\textsc{R}(\gG)\, = \begin{cases}
O\left( \gG^{-(\xi-4)} \right) & \text{ under  {\bf (H-2)}},
\\
O\left(\exp\left(- \gd \gG\right) \right) & \text{ under {\bf (H-2$^\prime$)}},
\end{cases}
\end{equation}
for a suitable choice of $\gd>0$.
\end{theorem}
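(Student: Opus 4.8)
The plan is to follow the route announced in the abstract: build an explicit probability measure $\nu_\gG$ on the projective line (parametrized, say, by the angle or the slope of a vector in $\R^2$) which is a good approximation of the Furstenberg measure $\mu_\gG$ associated with the matrices \eqref{eq:matrix2}, and then plug $\nu_\gG$ into the Furstenberg-type integral formula for $\cL(\gG)$ alluded to in point (1) after \eqref{eq:L_Z}. Concretely, writing the action of $M$ in \eqref{eq:matrix2} on directions, one obtains a Markov chain on $\R$ (or on a compact interval after a change of variable) whose driving randomness is the IID sequence $(\logZ_j)$; when $\gep = e^{-\gG}$ is small this chain spends most of its time near the two fixed directions $(1,0)^t$ and $(0,1)^t$, and the transitions between the two ``phases'' are governed by the sign changes of the random walk $S_k := \logZ_1 + \cdots + \logZ_k$, which by \eqref{eq:critical} is centered. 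The first step is therefore to make this picture quantitative: show that, up to errors of the claimed order in $\textsc{R}(\gG)$, the invariant measure decomposes into a ``bulk'' part supported near the two fixed points with mass determined by the stationary fraction of time the walk $S$ spends positive versus negative, plus boundary-layer corrections of width $O(e^{-\gG})$ whose structure is dictated by the overshoot/undershoot (i.e.\ the ladder heights) of $S$ at its successive ladder times.

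The second step is the renewal-theoretic heart of the argument. The ascending and descending ladder times of the centered walk $S$ form (after the usual Wiener--Hopf bookkeeping) renewal processes with heavy, non-summable inter-arrival tails $\sim c k^{-3/2}$, so that the number of ladder epochs up to time $n$ is of order $\sqrt n$; this $\sqrt n$ is exactly what will produce the $1/\gG$ leading term in \eqref{eq:main} once one identifies $n$ with a quantity of order $e^{\gG}$ or $\gG$ coming from the small-$\gep$ asymptotics of the escape time from a boundary layer. I would invoke sharp renewal estimates — local limit theorems and second-order expansions for the renewal mass function and for the expected overshoot, in the precise form available under a finite $\xi$-th moment ($\xi>5$, hypothesis \textbf{(H-2)}) or under exponential moments (\textbf{(H-2$^\prime$)}) — to control not just $\kappa_1$ and $\kappa_2$ but the size of the remainder. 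Hypothesis \textbf{(H-1)} (some convolution power $\zeta^{*n_0}$ has a density) enters here to guarantee the non-lattice/strong-renewal regularity needed for the local limit theorems and to ensure that $\nu_\gG$ genuinely has no atoms competing with $\mu_\gG$; the role of \textbf{(H-1)} is exactly to replace the much stronger uniform Hölder-density assumption \textbf{(h-1)} of Theorem~\ref{th:main-old}.

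The third step is the comparison $\|\nu_\gG - \mu_\gG\| \le \textsc{R}(\gG)$. Here I would use that $\mu_\gG$ is the unique stationary measure of a contracting Markov operator $P_\gG$ (the cone-contraction structure of point (2), valid for $\gep\in(0,1)$, gives a genuine Birkhoff contraction coefficient), so that for any probability $\nu$ one has a bound of the form $\|\nu - \mu_\gG\| \le (1-\rho_\gG)^{-1} \|\nu P_\gG - \nu\|$ in an appropriate metric; it then suffices to show that our hand-built $\nu_\gG$ is almost stationary, $\|\nu_\gG P_\gG - \nu_\gG\|$ being of the claimed order, which is precisely what the renewal estimates of step two deliver. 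One must be careful that the contraction rate $\rho_\gG$ itself degenerates as $\gG\to\infty$ (the matrices become nearly reducible), so the gain from near-stationarity has to beat the loss $(1-\rho_\gG)^{-1}$; quantifying this trade-off — i.e.\ showing the contraction deteriorates only polynomially in $\gG$ while the stationarity defect is smaller by the full $\textsc{R}(\gG)$ factor — is, I expect, the main obstacle, and it is where the precise exponent $\xi-4$ in \eqref{eq:main} (a loss of about one power of $\gG$ from the contraction estimate and a shift by the moment threshold needed for the ladder-height expansions) gets pinned down. Finally, inserting the bound on $\|\nu_\gG-\mu_\gG\|$ into the Lipschitz dependence of the Furstenberg integral formula on the invariant measure yields \eqref{eq:main} with the stated $\textsc{R}(\gG)$, and a separate elementary computation identifies $\kappa_1>0$ and $\kappa_2\in\R$ in terms of the variance of $\logZ$ and the ladder-height means of $S$.
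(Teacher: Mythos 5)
Your overall strategy does coincide with the paper's (an explicitly built approximate invariant measure coming from the edge/ladder structure of the centered walk, second\nobreakdash-order renewal input, then an ``almost stationary times inverse spectral gap'' bound fed into the Furstenberg integral formula), but the two steps you leave as expectations are exactly where all the work lies, and one of them, as you propose it, would fail. For the comparison step you invoke the cone\nobreakdash-contraction structure to get a one\nobreakdash-step bound of the form $\|\nu-\mu_\gG\|\le(1-\rho_\gG)^{-1}\|\nu P_\gG-\nu\|$. But the Birkhoff coefficient of a single matrix $M(e^{-\gG},Z)$ is $1-O(e^{-\gG})$ (the image of the positive cone has Hilbert diameter about $2\gG$), so this route gives a prefactor of order $e^{\gG}$, which destroys any polynomial remainder. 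The paper's \cref{thm:contractive} replaces it by a genuinely multi\nobreakdash-step estimate: the Lipschitz constant of the projective action of a product is bounded by $\tau(M_1\cdots M_n)\le\min(1,e^{2\gG-|\sum_{j\ge2}\logZ_j|})$, and by the CLT after $n\asymp\gG^2/\gs^2$ steps the walk has fluctuated by $3\gG$ with probability bounded away from zero, so $\bbE[\tau(M_1\cdots M_n)]\le 1/2$; summing gives $\sum_{n\ge0}\|T_{\gG,0}^nG\|_1\le C\gG^2\|G\|_1$ in the Wasserstein-$1$ metric. Note the loss is \emph{two} powers of $\gG$, not ``about one'' as in your accounting; combined with a one\nobreakdash-step stationarity defect $O(\gG^{-(\xi-2)})$ this is precisely what pins down the exponent $\xi-4$.

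The second gap concerns the leading term. Your heuristic that the $1/\gG$ comes from the $\sqrt n$ growth of the number of ladder epochs (ladder\nobreakdash-time tails $\sim k^{-3/2}$) points at the wrong object and does not produce the form $\kappa_1/(\gG+\kappa_2)$. In the paper the approximate measure $\gamma_\gG$ is obtained by patching the two \emph{infinite} invariant measures of the edge chains $Y_{n+1}=\logZ_{n+1}+h(Y_n)$ (driven by $\pm\logZ$), and the heart of the argument is the second\nobreakdash-order expansion $F_\lar(x)=x+c_\lar+O(x^{-(\xi-3)})$ of \cref{thm:stationary_asymptotics}. Proving this requires an occupation\nobreakdash-time representation of the invariant measure at the strictly descending ladder epochs, an auxiliary increasing chain built on the weak ascending ladder heights, the Rogozin--Stone second\nobreakdash-order renewal theorem for the ladder\nobreakdash-height renewal function (this is where {\bf (H-1)}, via Doney's moment transfer, supplies the spread\nobreakdash-out condition you correctly anticipate), and exponential\nobreakdash-moment control of the perpetuity $\sum_j\tilde\eta_je^{-H_{j-1}}$. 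The leading term then arises from the normalization $C_\gG=F_\lar(\gG)+F_\rar(\gG)=2\gG+2\kappa_2+\mathrm{err}$ of the patched measure, i.e.\ $1/\gG$ comes from the linear growth of the edge invariant measures and $\kappa_2=(c_\lar+c_\rar)/2$ from their constant\nobreakdash-order terms, not from ladder\nobreakdash-height means (indeed the paper has no closed formula for $\kappa_2$). So your outline is directionally right, but without (a) the $O(\gG^2)$ multi\nobreakdash-step contraction bound and (b) the linear\nobreakdash-plus\nobreakdash-constant expansion of the edge invariant measure, the argument does not close; these are the two theorems the paper actually proves.
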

\medskip

Let us stress the elementary fact that
\begin{equation}
\frac{\kappa_1}{\gG+ \kappa_2}\, =\, \kappa_1 \sum_{j=1}^\infty (-1)^{j-1} \kappa_2^{j-1} \gG^{-j}\, , 
\end{equation}
so that \eqref{eq:main-old} provides a development of $\cL(\gG)$ to all orders. 
This is also the case for \eqref{eq:main}  if   {\bf (H-2)} holds for $\xi$ arbitrarily large, 
but of course this only gives  
 $\textsc{R}(\gG)=O(\gG^{-C})$ for every $C>0$. An exponential control on the rest is  recovered under hypothesis {\bf (H-2$^\prime$)}$=${\bf(h-2)}.

When $\xi < \infty$, i.e.\ $\xi \in (5, \infty)$ then \eqref{eq:main} is equivalent to 
\begin{equation}
\label{eq:main-1}
\cL(\gG) \stackrel{\gG \to \infty}=\kappa_1 \sum_{j=1}^{\lfloor \xi \rfloor-4} (-1)^{j-1} \kappa_2^{j-1} \gG^{-j} + O\left( \gG^{-( \xi-4)}  \right) \, .
\end{equation}
with $\lfloor \cdot \rfloor$  the lower integer part of $\cdot$. 
Therefore for $\xi \in (5,6]$ we only have $\cL(\gG) = \kappa_1  \gG^{-1} + O(\gG^{-1-\epsilon})$, with
$\epsilon=\xi-5$, and $\kappa_2$ enters the game only for $\xi >6$.

\medskip

An important point of these results is that we do have expressions for $\kappa_1$ and $\kappa_2$ -- they are  in \eqref{eq:kappa_1_formula-0} and 
\eqref{eq:kappa2} --
and they are not simple expressions because they depend on the invariant measure of a suitable null recurrent Markov Chain (MC).
The dependence on invariant measures of MC's is not surprising because this is in the nature of  the Furstenberg expression for $\cL(\gG)$ (see \eqref{eq:Lyap-k}
below). 
However one of us proved \cite{cf:Orphee}, by  using the statistical mechanics origin of the question and avoiding the  Furstenberg expression for
 $\cL(\gG)$, that 
$\kappa_1$ is equal to the variance of the external magnetic field
and this result has been established also in \cite{cf:DMSDS-B} with an approach that stays in the realm of Furstenberg theory.
In the variables we use here this means that $4\kappa_1$ is the 
variance of
 $\logZ$, i.e.\ $\int_ \bbR x^2 \zeta(\dd x)$. This very intriguing point, proven  in \cite{cf:Orphee} 
 assuming only $\logZ \in \bbL^\xi$, $\xi>(3+\sqrt{5})/2=2.618\ldots$ and in 
 \cite{cf:DMSDS-B} assuming $\logZ \in \bbL^\infty$ (with a control on the error bound),
  is picked up again in Sec.~\ref{sec:discussion}. Moreover in App.~\ref{sec:kappa1=var} we show that 
  the  estimates we obtain can be used to provide  an alternative proof 
   of 
 the result in \cite{cf:Orphee,cf:DMSDS-B}. Of course with respect to \cite{cf:Orphee,cf:DMSDS-B} we do assume {\bf (H-1)}, so, in particular,
 the proof of $4\kappa_1=\int_ \bbR x^2 \zeta(\dd x)$ in App.~\ref{sec:kappa1=var}  is obtained under stronger 
 assumptions than in \cite{cf:Orphee}, 
 but we underline that assuming {\bf (H-1)} we   obtain a substantially sharper asymptotic statement and the
 emergence of another universal behavior as we discuss at the end of Sec.~\ref{sec:discussion}.
 
 {Finally, in view of the results in \cite{cf:Orphee} and of the fact that we believe  that  our technical estimates are close to being optimal,  the moment condition $\xi>5$ appears to be  a limitation of the method we use: for example the method does require a control on the invariant measure that involves the precise value of the constant $\kappa_2$ even when $\xi \in (5, 6]$, and  $\kappa_2$ is irrelevant for  the final result for $\xi \in (5, 6]$. On the other hand, it is definitely an open issue whether the error estimate on $\textsc{R}(\gG)$ is optimal (for example) in the following sense: do there exist random variables $\logZ\not\in \bbL^5$ such that    $\sup_\gG \vert \gG^2(\cL(\gG)- \kappa_1/ \gG)\vert=\infty$? Or is it even the generic  situation  in this case? Going farther in this direction and beyond the framework of this work,  it may be possible to  extend the result in  \cite{cf:Orphee} down to  $\logZ\not\in \bbL^2$, but a scaling different from $1/ \gG$ is expected for $\cL(\gG)$  if $\logZ\not\in \bbL^2$. 
 }

\subsection{Origin of the question and discussion of the literature}
\label{sec:discussion}

The key matrix $M$ in \eqref{eq:keyM} 
is proportional to the transfer matrix of the one dimensional Ising model
(see e.g. \cite[Ch.~3]{cf:Luck} or \cite{cf:DH83}):
\begin{equation}
\label{eq:Tmatrix}
 \begin{pmatrix} e^{J+h} & e^{-J+h} \\
e^{-J-h} &  e^{J-h} 
\end{pmatrix}\, =\, e^{J+h} \, \begin{pmatrix} 1 & e^{-2J} \\
e^{-2J-2h} &  e^{-2h} 
\end{pmatrix}\, =\,  e^{J+h}  M\left( e^{-2J}, e^{-2h} \right)\, , 
\end{equation} 
where $h$ is the external field and $J$ the interaction. Explicitly, the partition function of the Ising model of length $n$ with periodic boundary conditions and  with inhomogeneous external field $h_1,h_2,\ldots, h_n$ is equal to $\exp(JN + \sum_{j=1}^n h_j)$ times the trace of  $M\left( e^{-2J}, e^{-2h_1} \right)\cdots M\left( e^{-2J}, e^{-2h_n} \right)$, where we consider $(h_n)_{n\ge 1}$ as an IID sequence of random variables, distributed as $h$. Hence the Lyapunov exponent we consider is the free energy density of the   
random external field  one dimensional Ising model minus $J$.
The same transfer matrix product and Lyapunov exponent enter the exact expression for the free energy density of the two dimensional Ising model with zero external field and \emph{columnar disorder} \cite{cf:CGG,cf:MW1,cf:MWbook}.  
The seminal works in the analysis of this matrix in statistical mechanics of disordered systems are 
\cite{cf:DH83} and  \cite{cf:MW1};
\begin{enumerate}[leftmargin=0.7 cm]
\item In the first work, McCoy and Wu provide a solution to a continuum model arising in the weak disorder limit 
of the matrix product model we consider
(a mathematical approach to this limit can be found in \cite{cf:CGG} and solvable continuum limits for a much greater class of models may be found in \cite{cf:CLTT2013}). The Lyapunov exponent of the continuum model is explicitly expressed in terms of Bessel functions,   and one easily obtains an arbitrarily precise analysis, notably in the $J \to \infty$ limit: the $e^{-2J}$ appearing in the matrix in the $2d$ Ising model considered in \cite{cf:MW1} is not an interaction term, but rather a \emph{phase parameter} and key to understanding the phase transition is the singularity arising near phase zero, which explains the link with the $J \to \infty$ limit of the $1d$ case.
\item In the second work, Derrida and Hilhorst  provide an analysis of the $J \to \infty$ limit of the Lyapunov exponent in the case where the expectation of $h$ is nonzero, targeting directly the one dimensional case, but mentioning several other cases in which this limit is relevant, see 
 \cite{cf:DH83}, but also the monograph \cite{CPV}. 
The case of centered $h$ has been first taken up in \cite{cf:NL86}. 
\end{enumerate}

For the physical relevance of the analysis of the $J\to \infty$ and of why the $\bbE[h]=0$ case  is critical we refer in particular to \cite{cf:CGG,CPV,cf:DH83,cf:MWbook}. Here we insist on the fact that our main result \eqref{eq:main} arises in the exactly solvable McCoy-Wu continuum model, with an exponentially small error term $R(\gG)$ (in fact, the sharp behavior of this term is known in this case): in the continuum model the noise, or \emph{disorder}, is Brownian, hence with Gaussian tails. 
A priori it is not obvious to which extent the result obtained in the continuum limit corresponds to the behavior of the original matrix product:
in the off-critical case, i.e.\ $\bbE[h] \neq 0$, the leading behavior of $\cL(\gG)$ \cite{cf:DH83}
corresponds to the leading order of the continuum case \cite{cf:CGG}, and this has been mathematically established in  \cite{cf:GGG17} when the disorder is compactly supported and has a $C^1$ density. 
In \cite{cf:NL86} it is claimed that for the $\bbE[h]=0$ a sharp version of \eqref{eq:main},
i.e.\ a development to all orders in powers of $1/ \gG$, holds for a special compactly supported disorder law for which some exact computations are possible, but the analysis is not rigorous and no control on the error term $R(\gG)$ is given. 
This naturally sets forth the issue of understanding, at a mathematical level and for matrix products,  the validity of the result emerging in the continuum model. 

A step in this direction has been performed in
 \cite{cf:GG22} where   \Cref{th:main-old} is established: \eqref{eq:main} holds with an exponential control on $R(\gG)$
under the assumption that the distribution of the disorder
has exponential tails and its density is uniformly H\"older continuous.  
 
Here we greatly generalize the result in  \cite{cf:GG22}: in particular, we establish the result without assuming regularity of the density and, above all, we work with disorder power law tails. In this case the error term $R(\gG)$ has a power law behavior and therefore the free energy development holds only up to a finite order, see
 \eqref{eq:main-1}.
 
Like in  \cite{cf:GGG17,cf:GG22}, our method is based on constructing an explicit measure, the Derrida-Hilhorst probability proposed in \cite{cf:DH83} -- we introduce it in \eqref{eq:DHprobability} and we call it  $\gamma_\gG$ -- that is expected to be close to the Furstenberg measure (that we call  $\nu_\gG$), which is the unique invariant probability associated to the action of the matrix product in the projective space \cite{cf:BL}.
The Lyapunov exponent can be expressed in a straightforward way in terms $\nu_\gG$, as in \cref{eq:Lyap-k}, and insofar as $\gamma_\Gamma$ is close to the true stationary measure it can be used in this expression to give an approximation for the Lyapunov exponent.
 The construction of the Derrida-Hilhorst probability and showing that it is indeed close to the Furstenberg measure is the mathematical challenge. 
 The Derrida-Hilhorst probability is built by using the invariant measures  of two associated 
  Markov chains: when $\bbE[h]\neq0$ one chain is positive recurrent and the other is transient, while in the $\bbE[h]=0$ both chains are null recurrent, so  the invariant measures are not finite measures. 
 These Markov chains constitute a
   classical and much studied problem in probability and statistics, the \emph{random difference equation} ``$X=AX+B$'', see for example \cite{cf:BDMbook,cf:KestenAM}.   In this context, the case $\bbE[h]=0$ is naturally considered critical in the sense of random dynamical systems \cite{cf:BBE97,cf:BDMbook}. A contribution of our work is also providing a very precise solution to this problem in a special case.
 
   The main novelty of the current work, presented in \cref{sec:stationary}, is in  the analysis of the invariant measure of the null recurrent Markov chains 
 $Q_{n+1}= Z_{n+1} (Q_n+1)$, with $Q_0\ge0$ and $Z_{n+1}=\exp(-2h_{n+1})$, with $\bbE[h]=0$: the process $Y$ introduced in Sec.~\ref{sec:Y-MC} is
 $\log Q$. In the critical case  $Y=\log Q$ becomes a centered random walk -- the increments are $(-2h_j)$ -- with a force term that biases the walk against entering the negative half-line. The key to this treatment is exploiting renewal estimates to obtain  a precise control
  on the ladder process of the centered random walk and use it to control the invariant measure of the Markov chain $Y$ (or, equivalently, $Q$).
 
 Once the invariant measure of the $Y$ process is under control the Derrida-Hilhorst probability $\gamma_\gG$ can be built and we are left with showing that $\gamma_\gG$ is close to the Furstenberg probability $\nu_\gG$. This part follows the two step procedure used in \cite{cf:GG22}, and before that in \cite{cf:GGG17} in the noncritical case. Both steps are improved with respect to
 \cite{cf:GG22}:  
  \begin{enumerate}[leftmargin=0.7 cm]
 \item One step consists in establishing a \emph{residual} contraction property of the action of the Furstenberg  Markov chain in the critical case: at criticality the spectral gap vanishes with the strength of the interaction $\gG=2J$ and  in \cite{cf:GG22} an estimate $O(\log \gG / \gG^2)$ is established via diffusion approximation. Here we exploit a very different, more microscopic argument that yields $O( 1/\gG^2)$ and it is a probabilistic version of an argument  due to Q.~Moulard \cite{cf:QM}.  
 \item The other step consists in showing, in a quantitative way and in a precise sense, that the one step action of the Furstenberg  Markov chain does not modify much the Derrida-Hilhorst probability $\gamma_\gG$: the point here is generalizing the one-step bound developed in \cite{cf:GG22} from exponential to 
  power law disorder tail. 
 \end{enumerate}
 
The result in \cite{cf:Orphee}, i.e.\ that $\cL(\gG)\sim \mathrm{var}(\logZ)/ (4\gG)$ for $\logZ \in \bbL^\xi$ with $\xi>(3+\sqrt{5})/2$,
shows that our moment condition $\xi>5$ is not optimal. An improvement in this direction  appears to be rather challenging, since it 
seems to require new ideas about how to construct the invariant law of the $Y$ process, or a  new approach, i.e.\ not involving the Derrida-Hilhorst construction,  in proving properties of the invariant probability, or Furstenberg measure, $\nu_\gG$. We signal however that our approach does provide additional information, in particular a by-product is that if \eqref{eq:critical} and {\bf (H-1)} hold along with $\logZ \in \bbL^\xi $ for $\xi >4$ we have
\begin{equation}
\label{eq:W-1bound}
\mathrm{d}_{W,1} \left( \nu_\gG, \gamma_\gG\right)\, =\, O \left( \gG^{-(\xi-4)} \right)\, ,
 \end{equation}
 where $\mathrm{d}_{W,1} $ is the Wasserstein$-1$ distance: \eqref{eq:W-1bound} is a direct consequence of Corollary~\ref{th:bound-cor}
 and \eqref{eq:onestep} of Proposition~\ref{thm:onestep}. 
 {This is substantially stronger than the control on $\nu_\gG$
 than what is obtained in \cite{cf:DMSDS-B} (assuming $\logZ$ compactly supported, but not assuming hypothesis {\bf (H-1)}). 
 In \cite{cf:DMSDS-B}, see also \cite{cf:footprint}, the first order control on the Lyapunov exponent is extracted exploiting only  the weaker control on $\nu_\gG$ by a nice manipulation trick on the Lyapunov exponent formula.}
 
 We remark also that the fact that $\kappa_1= \mathrm{var}(\logZ)/4$ is strongly suggested by Fisher's Renormalization Group  (RG) approach \cite{cf:F95,cf:IM}. In \cite{cf:CGH1,cf:CGH2} we have  proven  results that confirm 
 the application of the Fisher RG to the critical random field $1d$ Ising model in
  \cite{cf:FLDM01}. The main results in  \cite{cf:CGH1,cf:CGH2} aim at understanding how the 
critical random field  $1d$ Ising configurations look like when $\gG=2J \to \infty$. In particular, in \cite{cf:CGH2} the results are shown assuming only that $\logZ\in \bbL^2$ (i.e.\ $\xi=2$) centered and nontrivial. Moreover, as already discussed at length in  
 \cite{cf:CGH1,cf:CGH2},  the application of Fisher RG in \cite{cf:FLDM01} does not yield the correct behavior of $\cL(\gG)$ beyond the leading order. This is probably  not surprising because the subleading correction is not expected to be universal. And even restricting to the $1d$ Ising model, we see that the subleading corrections  involve $\kappa_2$, which appears to depend on  details 
 of the law of $\logZ$ (even if this is only based on numerical evidence at this stage). 
 Nonetheless we stress that our result  sets forth another, somewhat surprising, form of universality: given the variance of $\logZ$, the subleading corrections  to all order in powers of  $1/ \gG$ depend only on the constant $\kappa_2$.

\subsection{Organisation of the rest of the paper}
In section~\ref{sec:Furstenberg} we recall the necessary tools from Furstenberg theory and we introduce the MC whose invariant measure
 is used to build the probability $\gamma_\gG$ that will be shown  to be a good approximation of the Furstenberg probability $\nu_\gG$. In 
 Sec.~\ref{sec:proofofmaintheorem} we state three results -- \Cref{thm:stationary_asymptotics}, \Cref{thm:contractive} and 
 \Cref{thm:onestep} -- that, combined, yield a proof of
  Theorem~\ref{th:main}. The next three sections are devoted to the proof of these three results.

\section{Tools from   Furstenberg theory} 
\label{sec:Furstenberg}

\subsection{The  Furstenberg expression for $\cL(\gG)$}
Key to the Furstenberg theory is understanding the action of our random matrices \eqref{eq:matrix2} on the projective space which in our case 
is simply one dimensional because $P(\bbR^2)\cong \bbR$.
If we parametrize $P(\bbR^2)\cong \bbR$ with the coordinates $(1, \exp(x))^{\mathtt{T}}$ we readily see that
the action of the matrix \eqref{eq:matrix2} is 
\begin{equation}
\label{eq:map}
x \, \mapsto \, z+ \log \left(
\frac{e^{-\gG }+e^x}{1+ e^{x-\gG }}
\right)\, =:\, z+ h_\gG (x)\, .
\end{equation}
We observe that $h_\gG (\cdot)$ is odd and 
$x \mapsto x- h_\gG (x)$, which  is also odd,  is small if $x \in (-\gG ,\gG )$ and far from the boundary points $\pm \gG $:
\begin{equation}
\label{eq:almost-id}
x-h_\gG (x)\, =\,  \log \left(\frac{1+ e^{x-\gG }}
{1+e^{-x-\gG }}
\right)\, 
=\,  O\left( e^{-(\gG -x)}\right)+ O\left( e^{-(\gG +x)}\right)\, ,
\end{equation}
so $h_\gG (x)$ is \emph{very close} to $x$ on an interval that approaches $\bbR$ in the limit $\gG  \to \infty$ (Fig.~\ref{fig:1}).

\medskip

\begin{figure}[h]
\centering
\includegraphics[width=15.5 cm]{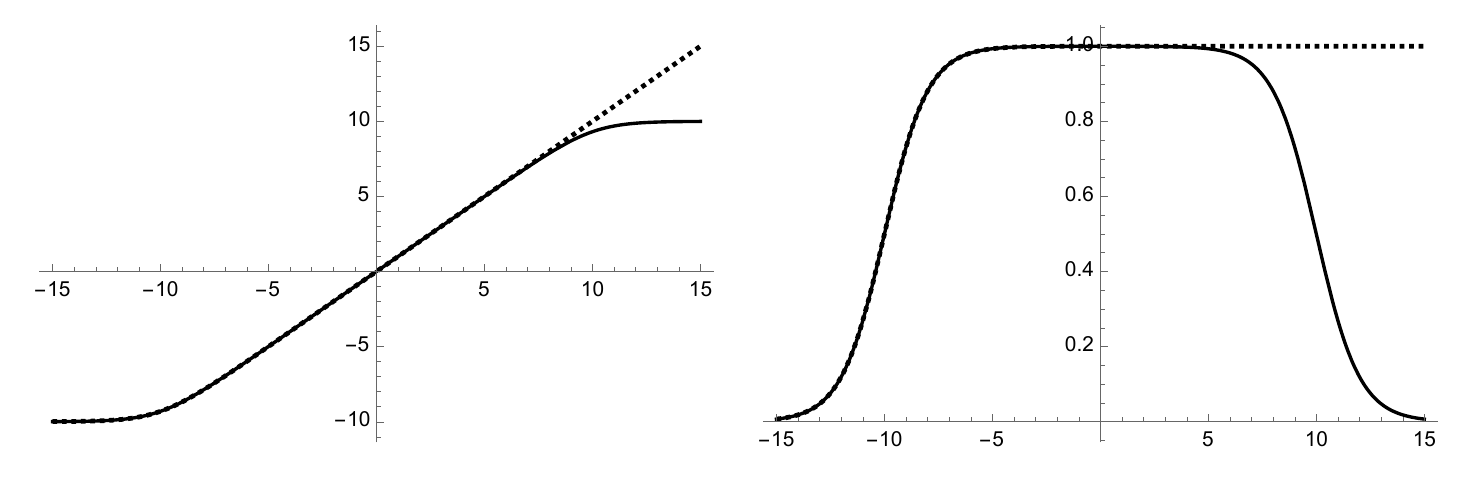}
\caption{\label{fig:1} For $\gG =10$: on the left the plot of $h_\gG (\cdot)$ (solid line) and of $x\mapsto \hup_\gG(x)= h(x+\gG )-\gG $ (dotted line); on the right the derivative of the same functions. }
\end{figure}

Denote by $X=(X_n)_{n=0,1, \ldots}$ the MC generated by 
the map \eqref{eq:map}, that is 
\begin{equation}
\label{eq:XMC}
X_{n+1}\, =\,  \logZ_{n+1} + h_\gG \left(X_n \right)\, .
\end{equation}
Since the image of $h_\gG $ is $(-\gG ,\gG )$ the process $(X_n)$ \emph{hardly leaves} $(-\gG ,\gG )$: in fact we may remark that, no matter what is the value taken by  $X_n$, $\vert X_{n+1}\vert \le \gG + \vert  \logZ_{n+1} \vert$. Moreover
 when the process is in $(-\gG ,\gG )$ and far from the boundaries it is \emph{close to being} a random walk with centered increments. 
 
The MC $X$ has a unique invariant probability $\nu_\gG $:  this can be established just under the assumption that $\logZ$ is not trivial either 
by checking the assumptions of the Furstenberg Theorem \cite{cf:BL} or by remarking the contractive property of this MC 
(this is detailed in \cite[Lemma~2.2]{cf:CGH2})
and we have that
\begin{equation}
\label{eq:Lyap-k}
 \cL(\gG )
\,=\, \int _\bbR \log(1+ \exp(-\gG  + x)) \nu_\gG (\dd x) \, .
\end{equation} 

\begin{rem} \label{rem:sym1}
 Formula \eqref{eq:Lyap-k}  is what follows by specializing the Furstenberg formula for the top Lyapunov exponent \cite[Th.~3.6]{cf:BL} to our context. More precisely, this is one of the possible equivalent formulas that one finds applying Furstenberg formula with different matrix norms, or even semi-norms
 The fact that these are equivalent is a byproduct of the theory that for positive entry matrices, the exponential rate of growth of every entry of the matrix product is given by the top Lyapunov exponent. In particular we have also
 \begin{equation}
 \label{eq:Lyap-k2}
 \cL(\gG )
\,=\, \int _\bbR \log(1+ \exp(-\gG  - x)) \nu_\gG (\dd x) \, , 
\end{equation}  
that a priori may look surprising, but from \eqref{eq:XMC} one obtains that $\int_ \bbR \vert x \vert \nu_\gG(\dd x) \le \gG + \bbE[ \vert \logZ\vert] < \infty$ and  $\int_\bbR (x - h_\gG(x))\nu_\gG(\dd x)=0$,
 and this identity offers a direct way to see  that the two expressions for the Lyapunov exponent in 
\eqref{eq:Lyap-k} and \eqref{eq:Lyap-k2} coincide.
 \end{rem}

\subsection{Building the framework to estimate $\cL(\gG)$} 
In general, if $A$ is a real-valued random variable, we use the notations $G_A(x)=\bbP(A>x)$ and $F_A(x)=1-G_A(x)$: $G_A$ is the risk (distribution) function of $A$ and $F_A$ is the cumulative (distribution) function of $A$. Likewise we speak of risk and cumulative functions of a probability on $\bbR$ and we use this  notation and terminology  also for finite  measures $\mu$ on $\R$: 
$G_\mu(x)= \mu((x, \infty))$ and 
$F_\mu(x)= \mu((-\infty, x])$. 

\medskip
\begin{rem}
\label{rem:infinite}
Later on, we will work also with nonnormalizable ($\gs$-finite) measures, notably with measures $\mu$ such that $\mu((-\infty, x])< \infty$ for every $x$, and the notation $F_\mu(x)$ will be used in this case as well. However, unless specified otherwise,  $\mu$ is a finite measure. 
\end{rem}
\medskip

For $G=G_\mu$, we define  $T_\Gamma G$ by 
\begin{equation}
\label{eq:TGdef}
 T_\gG G(x)\,: =\, G(-\infty)G_{\logZ}((\gG +x)_-) + \int_{(-\gG +x,\gG +x)} G\left( h_\gG ^{-1}(x-z)\right) \gz( \dd z)
\,,  \quad \forall x\in \R\, ,
\end{equation}
where $G_{\logZ}(y_-):= \lim_{x\uparrow y} G_{\logZ}(x)$ for  $y\in \R$ and as before, $\zeta$ denotes the law of $\logZ$. Similarly for $F=F_\mu$, we define \begin{equation}
\label{eq:FXT}
 T_\gG F(x)\, :=\, F(\infty)  F_{\logZ}(-\gG +x) +\int_{(-\gG +x,\gG +x)} F \left( h_\gG ^{-1}(x-z)\right) \gz(\dd z)
\, , \quad \forall x\in \R\,  .
\end{equation}

\medskip

\begin{lemma} 
We have for $n=0,1, \ldots$, 
\begin{equation}
\label{eq:GXTn} G_{X_{n+1}}= 
T_\gG G_{X_n}   
\,  ,
\end{equation}
and
\begin{equation}
\label{eq:FXTn} 
F_{X_{n+1}} = T_\gG F_{X_n} 
\,  .
\end{equation}
\end{lemma}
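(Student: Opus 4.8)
# Proof Proposal

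The plan is to prove the two identities \eqref{eq:GXTn} and \eqref{eq:FXTn} by a direct computation from the recursion \eqref{eq:XMC}, conditioning on $X_n$ and using the independence of $\logZ_{n+1}$ from $X_n$. I would begin with the risk-function identity \eqref{eq:GXTn}, as the cumulative-function version is entirely parallel. Fix $x\in\bbR$ and write $G_{X_{n+1}}(x)=\bbP(X_{n+1}>x)=\bbP(\logZ_{n+1}+h_\gG(X_n)>x)$. Conditioning on $X_n$ with law having risk function $G_{X_n}=G$, and using independence, this equals $\int_\bbR \bbP\big(\logZ_{n+1} > x - h_\gG(y)\big)\,\mathbb{P}_{X_n}(\dd y) = \int_\bbR G_{\logZ}\big((x-h_\gG(y))_-\big)\,\mathbb{P}_{X_n}(\dd y)$ --- except one must be careful with the one-sided limit: strictly $\bbP(\logZ_{n+1}>t)=G_{\logZ}(t)$ but $\bbP(\logZ_{n+1}\ge t)=G_{\logZ}(t_-)$, and since the event is $\{\logZ_{n+1} > x-h_\gG(y)\}$ one gets $G_{\logZ}(x-h_\gG(y))$; the $(\cdot)_-$ appears only in the boundary term, see below.

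The crux is the change of variables converting the $y$-integral into the $z$-integral in \eqref{eq:TGdef}. Since $h_\gG$ is a strictly increasing bijection from $\bbR$ onto $(-\gG,\gG)$ (this follows from \eqref{eq:map} and \eqref{eq:almost-id}, and should be noted explicitly), setting $z = x - h_\gG(y)$, equivalently $h_\gG(y) = x-z$, is valid precisely when $x-z \in (-\gG,\gG)$, i.e. $z\in(-\gG+x,\gG+x)$, and then $y = h_\gG^{-1}(x-z)$. The remaining mass of $X_n$, namely the part sitting at $y$ with $h_\gG(y)$ outside $(-\gG,\gG)$ --- which, since the \emph{range} of $h_\gG$ is the open interval, is only the ``mass at infinity'' $G(-\infty):=\lim_{y\to-\infty}G(y)=\mathbb{P}_{X_n}(\{-\infty\})$ if one allows defective laws, but for a genuine probability $G(-\infty)=1$ and $G(+\infty)=0$ --- I'd handle this by decomposing: for the portion where the image $h_\gG(y)$ can take \emph{any} value in $(-\gG,\gG)$, one always has $h_\gG(y)\in(-\gG,\gG)$, so $x-z$ ranges over that interval; the term $G(-\infty)G_{\logZ}((\gG+x)_-)$ captures the contribution $\bbP(\logZ_{n+1}+h_\gG(X_n)>x)$ from the ``tail at $h_\gG=-\gG$'': since $h_\gG(y)\downarrow -\gG$ as $y\to-\infty$, the event becomes $\logZ_{n+1}>x+\gG$, but as this is a limiting/boundary contribution it enters with $G_{\logZ}((\gG+x)_-)$ and weight $G(-\infty)$. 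Reassembling gives exactly \eqref{eq:TGdef}. The identity \eqref{eq:FXTn} follows by the identical argument with $F_{X_{n+1}}(x)=\bbP(X_{n+1}\le x)$, $F(\infty):=\lim_{y\to+\infty}F(y)$, and the boundary term coming from $h_\gG(y)\uparrow\gG$ as $y\to+\infty$, producing $F_{\logZ}(-\gG+x)$.

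The main obstacle I anticipate is bookkeeping at the boundary: correctly accounting for where the range $(-\gG,\gG)$ of $h_\gG$ is open, deciding whether the boundary terms carry $G_{\logZ}(\cdot)$ or $G_{\logZ}(\cdot_-)$, and making sure no mass is double-counted or dropped between the integral term and the boundary term. For a bona fide probability measure $\mathbb{P}_{X_n}$ this is essentially vacuous ($G(-\infty)=1$, $F(\infty)=1$, and there is no atom at $\pm\infty$), so the formula is really designed to remain valid when $T_\gG$ is later iterated starting from measures that are \emph{not} normalized (cf.\ Remark~\ref{rem:infinite}); I would state the proof for a general finite measure $\mu$ on $\bbR$ (not just a probability), verifying that $T_\gG$ is exactly the pushforward-plus-convolution operator implementing one step of \eqref{eq:XMC} at the level of risk/cumulative functions, and that the $n=0$ case and the inductive step are the same single computation. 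Everything else is a routine application of Fubini and monotone convergence to justify interchanging $\bbP(\logZ_{n+1}>\cdot)$ with the $\mathbb{P}_{X_n}$-integral.
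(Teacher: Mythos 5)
Your overall plan (condition on one variable, use independence, split according to whether the argument lands in the range $(-\gG,\gG)$ of $h_\gG$) is in the right spirit, but the step you call the crux does not work as described. Having conditioned on $X_n$ you arrive at $\int_\bbR G_{\logZ}(x-h_\gG(y))\,\bbP_{X_n}(\dd y)$, and you then claim to reach \eqref{eq:TGdef} by the substitution $z=x-h_\gG(y)$. That substitution only reparametrizes the $y$-integral: it yields $\int_{(-\gG+x,\gG+x)} G_{\logZ}(z)\,\tilde\mu_x(\dd z)$, where $\tilde\mu_x$ is the pushforward of the law of $X_n$ under $y\mapsto x-h_\gG(y)$. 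In \eqref{eq:TGdef} the roles are reversed: $z$ there is the value of $\logZ_{n+1}$, integrated against $\zeta$, with $G_{X_n}$ inside the integral. The two representations are indeed both equal to $\bbP(\logZ_{n+1}+h_\gG(X_n)>x)$, but passing from one to the other is a Fubini interchange (swapping which of the two independent variables you condition on) followed by a three-case analysis in $z$, not a change of variables; the Fubini step, which is the actual content, is only gestured at in your last sentence.

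Relatedly, your reading of the term $G(-\infty)G_{\logZ}((\gG+x)_-)$ is incorrect, and this is not mere bookkeeping. It is not a ``mass at $-\infty$'' of $X_n$ nor a limiting contribution from $h_\gG(y)\downarrow-\gG$: here $G(-\infty)$ is the \emph{total mass} of the measure (equal to $1$ when $G=G_{X_n}$), and the term accounts for the event $\{\logZ_{n+1}\ge x+\gG\}$, i.e.\ $z$ falling outside the integration interval $(-\gG+x,\gG+x)$, on which $\logZ_{n+1}+h_\gG(X_n)>x$ holds for \emph{every} value of $X_n$ because $h_\gG>-\gG$; the left limit $(\gG+x)_-$ arises because this event is $\{\logZ_{n+1}\ge x+\gG\}$ with a weak inequality. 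The clean repair is the route the paper takes: condition on $\logZ_{n+1}$, write $G_{X_{n+1}}(x)=\int \bbP\bigl(h_\gG(X_n)>x-z\bigr)\,\zeta(\dd z)$, and split into $x-z\le-\gG$ (probability one, giving $G_{\logZ}((\gG+x)_-)$), $|x-z|<\gG$ (giving $G_{X_n}(h_\gG^{-1}(x-z))$ by monotonicity of $h_\gG$), and $x-z\ge\gG$ (probability zero); the cumulative version \eqref{eq:FXTn} is then the same computation.
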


\medskip

\noindent
\emph{Proof.} By using that $h_\gG (\cdot)$ is an increasing bijection from $\bbR$ to $(-\gG ,\gG )$ we see that
\begin{equation}
\label{eq:12}
\begin{split}
G_{X_{n+1}}(x)\, &=\, \bbP\left( \logZ_{n+1} +h_\gG \left( X_n\right) >x \right) 
\\
&=\, \bbP \left( x-  \logZ_{n+1}\le -\gG \right)+\bbP\left(h_\gG \left( X_n\right) >x-  \logZ_{n+1} ; \, \vert x-  \logZ_{n+1}\vert <\gG  \right) 
\\
&=\,  \bbP \left( \logZ \ge x+\gG \right)+\bbP\left(X_n >h_\gG ^{-1}\left(x-  \logZ_{n+1}\right) ; \, \vert x-  \logZ_{n+1}\vert <\gG  \right)
\\
&=\,  G_{\logZ}\left((\gG +x)_-\right) + \int_{(-\gG +x,\gG +x)} G_{X_n}\left( h_\gG ^{-1}(x-z)\right) \gz(\dd z) \, .
\end{split}
\end{equation}
The proof for \eqref{eq:FXTn} is analogous.
\qed

Note that although explicitly $h_\Gamma^{-1}(y) =\log\tfrac{e^y-e^{-\Gamma}}{1-e^{y-\Gamma}}$, this precise form is not particularly useful for what follows.

\medskip

The map $T_\gG$ can then be extended by linearity to act on $G$ defined by 
$G(x)=G_{\nu_+}(x)- G_{\nu_-}(x)$, $\nu_\pm$ finite measures (so $\vert G(-\infty)\vert < \infty$):
\begin{equation}
\label{eq:GXT}
 T_\gG G(x)\, =\, G(-\infty)G_{\logZ}((\gG +x)_-) + \int_{(-\gG +x,\gG +x)} G\left( h_\gG ^{-1}(x-z)\right) \gz( \dd z)
\, .
\end{equation}
In the applications we consider $\nu$ is  the difference of two probability measures, so $G(-\infty)=0$ and
$T_\gG$ reduces to $T_{\gG,0}$, with 
\begin{equation}
	T_{\gG,0} G (x)\, :=\, 
	\int_{(-\gG +x,\gG +x)} G\left( h_\gG ^{-1}(x-z)\right) \gz( \dd z)\, , \  \quad \forall x\in \R\, ,
	\label{eq:TG0_def}
\end{equation}
which is well defined  also in the slightly different set-up of  $G\in \bbL^1$. In fact, by the Fubini-Tonelli Theorem and  
since $h_\gG'$ takes values in $(0,1)$ we have
\begin{equation}
\label{eq:Tbound0}
\begin{split}
	\left \Vert T_{\gG,0} G \right \Vert_1\, &\le \, \int_\bbR
	\left(\int_{(-\gG +x,\gG +x)} \left \vert G\left( h_\gG ^{-1}(x-z)\right)\right\vert \gz( \dd z)\right) \dd x
	\\
	&=\, \int_ \bbR \left( \int_\bbR 
	 \left \vert G\left( y\right)\right\vert  h_\gG' (y)\dd y
	\right)  \gz( \dd z)\, \le \, \Vert G \Vert_1
	 \, .
\end{split}	
\end{equation}

\smallskip

\subsection{Looking from the edge: the reduced chain $Y$}
\label{sec:Y-MC}
If we sit on $-\gG $, that is if we make it our new origin, in the limit as $\gG \to \infty$ the MC becomes
\begin{equation}
\label{eq:iterY}
Y_{n+1}\, =\,  \logZ_{n+1} + h\left(Y_n \right)\ \ \ \text{ with } h(y)\, =\, y+ \log(1+\exp(-y))\, .
\end{equation}
Note in particular that, since $h(\cdot)$ is positive and increasing, we have  
\begin{equation}
\label{eq:Fn-onesided}
\begin{split}
F_{Y_{n+1}}(x)\, &=\, \bbP\left( \logZ_{n+1}+ h(Y_n) \le x, \, x- \logZ_{n+1} >0 \right)
\\&=\,  \bbP\left(Y_n \le h^{-1}\left(x-  \logZ_{n+1}\right), \,  \logZ_{n+1} <x \right)
\\
&=\, \int \ind_{(-\infty, x)} (z) F_{Y_n}\left( h^{-1}\left(
x-z\right)\right) \zeta(\dd z)\, .
\end{split}
\end{equation}
Although $h^{-1}(x) = \log(e^x-1)$, this precise form is not particularly important.

\medskip

\begin{proposition}
\label{th:Y}
Assume \eqref{eq:critical} and  that there exists $p>2$ such that $\logZ\in \bbL^p$. Then 
	the MC $Y$ has a unique nonzero invariant Radon measure $\nu$. It satisfies that ${\nu((-\infty,x])< \infty}$ for every $x\in \bbR$, 
	$\nu((-\infty,x])\le \nu((-\infty,0]) \bbP(\logZ< x)$ for $x\le 0$ and 
	$\nu(\bbR)= \infty$. Moreover, $Y$ is recurrent in the sense that if $O\subset \bbR$ is an open set with $\nu (O)>0$, then, given any $Y_0$,
	$\bbP(Y_n \in O \text{ i.o.})=1$.
\end{proposition}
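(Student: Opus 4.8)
The plan is to work with the random difference equation $Q_{n+1}=Z_{n+1}(Q_n+1)$, so that $Y=\log Q$, and to realise $\nu$ as the expected occupation measure of an excursion of $Y$ above a fixed half-line, exploiting that away from this half-line $Y$ is a small upward perturbation of the centered random walk $S_n:=\sum_{j\le n}\logZ_j$.

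I would first record that $h(y)=\log(1+e^y)$ is increasing, strictly positive, and satisfies $y^+\le h(y)\le y^++\log 2$; consequently $D_n:=Y_n-S_n$ is nondecreasing (its increment is $\log(1+e^{-Y_n})>0$), so $Y_n\ge S_n+D_0$ for all $n$. Since $\logZ\in\bbL^2$ is centered and nondegenerate, $S$ is oscillating recurrent. Fix $C\ge0$ with $\bbP(\logZ>C)>0$ and set $B:=(-\infty,C]$. Then $Y$ enters $B$ infinitely often from every starting point: it cannot escape to $+\infty$ because on $\{Y_n>C\}$ the conditional drift of $Y$ is at most $e^{-C}$, so a comparison with a random walk of arbitrarily small positive drift, together with $\liminf S_n=-\infty$, forces $Y$ back below $C$; and since $h\ge0$, from every state in $B$ one has $\bbP(Y_{n+1}>C\mid\cF_n)\ge\bbP(\logZ>C)=:q>0$, so each sojourn of $Y$ in $B$ is stochastically dominated by a geometric random variable of parameter $q$.

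Watching $Y$ at its successive entrances into $B$ yields a Markov chain on $B$ which is tight — a return from above is through a $\logZ$-jump with new value $\ge\logZ$, and from deep inside $B$ a single step brings $Y$ up to roughly the level of the next increment $\logZ$ — hence positive recurrent, with a unique invariant probability $\pi$ on $B$. I would then set $\nu(A):=\bbE_\pi[\sum_{n=0}^{\tau-1}\ind_{\{Y_n\in A\}}]$, with $\tau$ the first return time of $Y$ to $B$, and check in the standard way that $\nu$ is a nonzero $\sigma$-finite invariant measure. Its stated properties then follow: $\nu(B)=\bbE_\pi[\text{length of the first sojourn in }B]\le 1/q<\infty$, while for bounded $A\subset(C,\infty)$ the quantity $\nu(A)$ is the expected occupation of $A$ before return to $B$, dominated by the Green's function of $S$ killed on leaving a bounded set and hence finite, so $\nu$ is Radon and $\nu((-\infty,x])<\infty$ for every $x$; the bound $\nu((-\infty,x])\le\nu((-\infty,0])\,\bbP(\logZ<x)$ for $x\le0$ follows, with $C=0$ so that $\nu|_B=\pi$, from the one-step invariance of $\pi$ together with the fact that $Y$ reaches a value $\le x\le0$ only via a $\logZ$-increment that is $<x$, positivity of $h$ giving the domination by the law of $\logZ$ on the negative half-line; and $\nu(\bbR)=\bbE_\pi[\tau]=\infty$ because, after leaving $B$, $Y$ makes an excursion above $C$ that dominates from below (using $D_n\ge D_0$) a zero-drift random-walk excursion above a level, whose expected length is infinite — this is precisely where the criticality $\bbE[\logZ]=0$ enters.

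For uniqueness and recurrence I would argue directly from the recurrence of $Y$ established above: Hopf's ratio ergodic theorem identifies any invariant Radon measure with $\nu$ up to a positive scalar, and for an open set $O$ with $\nu(O)>0$ one has $\bbP_\pi(Y\text{ visits }O\text{ before }\tau)>0$, so by the regenerative structure of the successive $B$-excursions (and one initial step to bring an arbitrary starting point into $B$) $Y_n\in O$ infinitely often almost surely. The step I expect to be the main obstacle is the rigorous excursion bookkeeping: showing that the chain watched on $B$ is genuinely positive recurrent with a well-behaved entrance law $\pi$, and that the expected occupation of bounded sets during one excursion is finite while the expected excursion length is infinite — these are the renewal-theoretic inputs about the ladder structure of the centered walk $S$, and it is here that $p>2$ (rather than merely $p\ge2$) is used to obtain the quantitative tail control of $\pi$ and of $\nu$ near $-\infty$ that the rest of the paper needs.
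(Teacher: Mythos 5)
Your construction-from-scratch is a genuinely different route from the paper's proof, which is essentially a reduction plus citation: setting $R_n=e^{Y_n}$ turns \eqref{eq:iterY} into the critical random difference equation $R_{n+1}=Z_{n+1}(R_n+1)$, and existence, uniqueness, infinite total mass and recurrence of the invariant Radon measure are quoted from Babillot--Bougerol--Elie \cite{cf:BBE97}, with $\nu((-\infty,x])<\infty$ taken from \cite[(A.2)]{cf:GG22}; only the explicit left-tail bound is argued directly from the invariance identity \eqref{eq:Fn-onesided-infty}. So all the heavy lifting you propose to do by hand is exactly the content of \cite{cf:BBE97}, and the two places where your sketch replaces it are the places where it breaks.

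First, recurrence: during an excursion above $C$ you bound $Y$ above by a random walk with the \emph{fixed} positive drift $\log(1+e^{-C})$; such a walk tends to $+\infty$ a.s., so the comparison does not force $Y$ back below $C$, and ``arbitrarily small drift'' is not available because $C$ is fixed once $B$ is fixed. Recurrence here is genuinely delicate (in the critical case $Y_n\to+\infty$ in probability); an elementary proof would have to go through tightness of the ladder-time chain $\Theta_j=Y_{\varrho_j}$ of \eqref{eq:defTheta}, which you do not establish. Second, uniqueness: Hopf's ratio ergodic theorem identifies invariant Radon measures only after one knows a Harris-type irreducibility, i.e.\ that all invariant measures charge a common irreducibility measure; the proposition assumes no density for $\zeta$ (hypothesis {\bf (H-1)} is not in force here), so this is unavailable, and uniqueness up to scalars is precisely the hard theorem of \cite{cf:BBE97}, proved there by Choquet--Deny-type arguments, not by ergodic-theoretic generalities. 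The same lack of irreducibility undermines the asserted existence and uniqueness of the invariant probability $\pi$ of the entrance chain (tightness alone gives neither). Two further steps would fail as written: local finiteness of the excursion occupation measure is not ``dominated by the Green's function of $S$ killed on leaving a bounded set'', because $Y_n\ge S_n-S_{n_0}+Y_{n_0}$ is only a lower bound --- the comparison walk can leave the bounded window while the $Y$-excursion continues, so the visit count is not dominated by a killed Green's function; and the left-tail bound does not follow from positivity of $h$ alone, since the one-step identity only yields $F_\nu(x)\le\int_{(-\infty,x)}F_\nu\left(h^{-1}(x-z)\right)\zeta(\dd z)$ and the integrand is not bounded by $F_\nu(0)$ when $z$ is very negative. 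Your instinct that the excursion bookkeeping and the ladder-structure estimates are the real obstacles is right, but those are exactly the missing proofs, not refinements of the argument you gave.
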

\medskip

For nonnormalizable measures uniqueness is of course meant \emph{up to  a multiplicative constant} and we note that $\nu$ is characterized by 
\begin{equation}
\label{eq:fornu}
\int  g(z) \nu (\dd z)\, =\, \iint  g(z +h(y)) \gz(\dd z)  \, \nu (\dd y)\ \ \text{ for every measurable } g \ge 0\,  .
\end{equation}
We recall that a  measure $\nu$ on $\bbR$, equipped with the Borel $\gs$-algebra $\cB( \bbR)$,  is Radon if 
the measure of compact sets is finite and if it is regular, i.e.\ 
if for every $B \in \cB(\bbR)$ we have $\nu(B)$ is equal both to the infimum  of $\nu(O)$ taken over $O$ open that contains $B$,
and to the supremum  of $\nu(K)$ taken over $K$ compact contained in $B$.

\medskip

\begin{proof}[Proof of Prop.~\ref{th:Y}]
If we set $R_n:= \exp(Y_n)$ then, with $Z_{n+1}=\exp(\logZ_{n+1})$,  $R_{n+1}= Z_{n+1} (R_n+1)$ and $R$ is a MC with state space $(0, \infty)$. This is a special case of the process considered in \cite{cf:BBE97}. So by 
\cite[Proposition~1.1 and Theorem~3.3]{cf:BBE97} we know that $R$ has a unique nonzero invariant Radon measure $m$ and that this measure satisfies $m((0, \infty))=\infty$. 
Of course the MC $R$ with state space $(0, \infty)$ is equivalent to the MC $Y$ with state space $\bbR$. 
In particular, if we introduce the measure $\nu$
by setting $\nu((s, t])=m((\exp(s), \exp(t)])$ for every $s<t$, it is straightforward to see that $\nu$ is invariant for $Y$ and that $\nu$ inherits the properties of $m$. By the bound \cite[(A.2)]{cf:GG22} one infers that $\nu((-\infty, t])< \infty$ for every $t \in \bbR$. Recurrence of the process is proven in  \cite[Cor.~4.2$(i)$]{cf:BBE97}. Finally, the explicit bound on $F_\nu(x):=\nu((-\infty,x])$ holds because $F_\nu$ satisfies the analogue of \eqref{eq:Fn-onesided}, that is
\begin{equation}
\label{eq:Fn-onesided-infty}
F_\nu(x)\, =\,  \int_{(-\infty, x)} F_{\nu}\left( h^{-1}\left(
x-z\right)\right) \zeta(\dd z)\, .
\end{equation}
Using  that $F_\nu(\cdot)$ and  $h^{-1}(\cdot)$ are nondecreasing 
and that $h^{-1}(y) < y$, for every $y$, we see that for $z<x\le 0$ we have
 $F_{\nu}\left( h^{-1}\left(
x-z\right)\right) \le F_{\nu}\left( h^{-1}\left(
-z\right)\right)\le F_{\nu}(-z) \le F_{\nu}(0)$, hence $F_\nu(x)\le F_{\nu}(0) \bbP( \logZ <x)$.
\end{proof}

\section{The proof of the main result}
\label{sec:proofofmaintheorem}

We give here a series of three statements that we prove in the next sections. We then prove Theorem~\ref{th:main} assuming the validity of these statements. 

\subsection{On the invariant measure of the $Y$ process}  

Recall that $(\logZ_n)_{n=1, 2, \ldots}$ is an  IID sequence with  $\logZ_1 \law \logZ$. We introduce
$(S_n)$  the corresponding random walk starting at $0$: $S_n:= \sum_{j=1}^n \logZ_j$ for $n =1,2, \ldots$ and 
$S_0=0$.  Of course this random walk is null recurrent. 

\medskip
Recall \eqref{eq:iterY} for the definition of the Markov chain $(Y_n)$. 
\begin{theorem}  
	\label{thm:stationary_asymptotics}
	Assume \eqref{eq:critical} and  {\bf (H-1)} of Theorem~\ref{th:main} plus the following two conditions: 
	 \begin{description}
	\item[(T-1)] there exists $\xi > 3$ such that $\e[(\logZ^+)^{\xi}] < \infty$; 
	\item [(T-2)]  $\e[\logZ^2]\in (0, \infty)$. 
	\end{description}
	   Then any nonzero invariant measure $\nu$ of $Y$  satisfies that
\begin{equation} 
\label{eq:stationary_asymptotics}
\nu((-\infty, x])\stackrel{x\to \infty}=  c_\nu x+ d_\nu + O(x^{-(\xi-3)})\, .
	  \end{equation}     
	  for appropriately chosen constants
	  $c_\nu>0$ and $d_\nu\in \bbR$. 
	  
	  Moreover, if  {\bf (T-1)} is replaced by 
	  \smallskip
	  
	  \noindent  {\bf (T-1$^\prime$):} 
	 there exists $c>0$ such that
	  $\e[\exp(c\,\logZ^+)] < \infty$;
	  \smallskip
	  
	  \noindent then \eqref{eq:stationary_asymptotics} holds with error term $O\big(\exp\big(-\gd x \big)\big)$, for some positive constant $\gd$.
	  \end{theorem}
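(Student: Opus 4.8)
The plan is to encode the invariant measure $\nu$ of $Y$ via the ladder structure of the centered random walk $(S_n)$, and then to read off the linear behavior at $+\infty$ together with a sharp error term from renewal estimates. First I would recall from \eqref{eq:fornu} that $\nu$ is invariant under the kernel $y\mapsto z+h(y)$ with $h(y)=y+\log(1+e^{-y})$; since $h(y)=y+O(e^{-y})$ for large $y$ and $h^{-1}(y)<y$ always, the chain $Y$ behaves like the random walk $S$ plus a positive ``force'' term concentrated near the origin (equivalently $R_n=e^{Y_n}$ solves $R_{n+1}=Z_{n+1}(R_n+1)$, the $+1$ being the force). The key object is the renewal process of strictly descending ladder epochs of $S$: starting the chain high up, $Y$ essentially copies $S$ until $S$ attains a new minimum, at which point the force term nudges it; tracking these ``regeneration attempts'' expresses $\nu((-\infty,x])$ as (roughly) a renewal sum. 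Concretely I expect an identity of the form $\nu((-\infty,x]) = \sum_{k\ge 0}\bbP(\text{some ladder/regeneration observable involving } S \text{ and } x)$, whose large-$x$ asymptotics is governed by the renewal measure associated with the ascending ladder height distribution of $S$.

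Next I would invoke the relevant quantitative renewal theory. Because $\bbE[\logZ]=0$ and $\bbE[\logZ^2]\in(0,\infty)$ by {\bf (T-2)}, the ascending ladder height $H$ of $S$ has finite mean (Spitzer/Wiener--Hopf: $\bbE[H]<\infty$ iff $\bbE[(\logZ)^2]<\infty$), so the renewal theorem gives that the renewal function $U_H(x)=\sum_k \bbP(H_1+\dots+H_k\le x)$ satisfies $U_H(x) = x/\bbE[H] + \text{const} + (\text{error})$. The error term is exactly where {\bf (T-1)} vs.\ {\bf (T-1$^\prime$)} enters: under a moment condition $\logZ\in\bbL^\xi$ one transfers, via the Wiener--Hopf factorization and known tail-transfer results (e.g.\ $\bbP(H>t)$ decays like $\bbP(\logZ^+)$ does, up to the loss of one power from the ladder-height integration), a polynomial control $\bbP(H>t)=O(t^{-(\xi-1)})$, hence by the quantitative renewal theorem (Stone-type / Carlsson-type bounds on $U_H(x)-x/\bbE[H]-c$) an error of order $O(x^{-(\xi-3)})$ — the two lost powers being: one from ladder-height integration, one from renewal-function integration. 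Under {\bf (T-1$^\prime$)} the ladder height inherits an exponential moment and the renewal error is exponentially small, giving $O(e^{-\gd x})$. Hypothesis {\bf (H-1)} enters to guarantee enough smoothness (some convolution power of $\zeta$, hence of the ladder distribution, has a density) so that the renewal measure has no arithmetic obstruction and the strong (non-lattice) renewal theorem with error term applies; this is also what was used in Prop.~\ref{th:Y} to control $F_\nu$ near $-\infty$.

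I would then assemble the pieces: write $\nu((-\infty,x]) = (\text{linear main term from } U_H) + (\text{bounded boundary correction})+ (\text{renewal error})$, identify $c_\nu$ as the reciprocal of the ladder-height mean times the appropriate normalization (this is where $c_\nu>0$ comes from), absorb all $O(1)$ contributions into $d_\nu$, and carry the renewal error through to get $O(x^{-(\xi-3)})$ under {\bf (T-1)} and $O(e^{-\gd x})$ under {\bf (T-1$^\prime$)}. Uniqueness up to a constant is already in Prop.~\ref{th:Y}, so the statement holds for \emph{any} nonzero invariant $\nu$ (the constants $c_\nu,d_\nu$ scaling with the normalization).

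The main obstacle I anticipate is establishing the precise renewal-type representation of $\nu((-\infty,x])$ in terms of the descending-ladder structure of $S$ \emph{with controlled remainder} — i.e.\ showing rigorously that the discrepancy between the nonlinear chain $Y$ and the linear random walk $S$ (the cumulative effect of the force term $h(y)-y$ away from the origin, and the precise coupling at regeneration times) contributes only to the bounded term $d_\nu$ and to an error no larger than the renewal error, rather than corrupting the leading asymptotics. Controlling this coupling is where the moment hypothesis {\bf (T-1)} on $\logZ^+$ (the ``upward'' excursions that can overshoot) does real work, and where the loss from $\xi$ to $\xi-3$ must be accounted for carefully; the renewal-theoretic input, by contrast, is essentially off-the-shelf once the ladder-height tail bound is in hand.
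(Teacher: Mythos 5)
Your outline follows the same route as the paper: a representation of $\nu$ through the ladder structure of the centered walk $S$, Doney's moment transfer to the ascending ladder height $H_1$, and the Rogozin/Stone/Carlsson quantitative renewal estimates under the spread-out condition supplied by {\bf (H-1)}; your power counting ($\xi$ moments of $\logZ^+$ give $\kappa=\xi-1$ moments of $H_1$, hence a renewal error $O(x^{-(\kappa-2)})=O(x^{-(\xi-3)})$) is exactly the paper's. The problem is that what you flag as ``the main obstacle I anticipate'' is precisely the core of the proof, and it is not carried out. The paper needs a two-stage decomposition, not the single ``renewal sum'' you gesture at: first, the occupation-time representation of $\nu$ over excursions of $Y$ below a fixed level $a$, sliced along the strictly descending ladder epochs $\varrho_j$ of $S$, which produces the Markov chain $\Theta_j=Y_{\varrho_j}$ and reduces everything to a function $F(\theta,\cdot)$ counting visits of $Y$ during one descending-ladder block (Lemma~\ref{l:Theta}); second, a duality argument inside each block rewriting $F(\theta,\cdot)$ as $\sum_k \bbP(J_k(\theta)\in\cdot)$ for the increasing chain $J_k(\theta)=\theta+H_k+\log\big(1+e^{-\theta}\sum_j\widetilde\eta_j e^{-H_{j-1}}\big)$ of \eqref{eq:defJ} (Lemma~\ref{l:rewriteF}). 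Only after these two reductions does the renewal function of $H$ appear and the ``off-the-shelf'' renewal input become usable.

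Moreover, the discrepancy between $J_k(\theta)$ and $\theta+H_k$ is a perpetuity $\sum_j\widetilde\eta_j e^{-H_{j-1}}$, and controlling it is where {\bf (T-2)} actually does its work: $\bbE[\logZ^2]<\infty$ yields an exponential moment for $\widetilde\eta_1$ (a sum over a negative excursion of $S$) and then, via Goldie--Gr\"ubel, for the perpetuity itself; attributing {\bf (T-2)} only to $\bbE[H_1]<\infty$ misses this, since finiteness of $\bbE[H_1]$ already follows from {\bf (T-1)}. Beyond that, the argument requires the estimates of Lemma~\ref{l:beta} ($\e_b[\tau_\Theta(a)]=O(b)$ and exponentially small occupation of $\Theta$ above level $x$, obtained by comparison with a negative-drift walk), uniformity of the asymptotics of $F(\theta,(x,y])$ in $\theta\le x/2$ so that one can integrate over the (random) starting point against $\nu(\dd v)$ on $(-\infty,a]$, and finally a dyadic summation upgrading the uniform interval estimate $\nu((x,y])=c_\nu(y-x)+O(x^{-(\xi-3)})$ to the cumulative statement \eqref{eq:stationary_asymptotics}. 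None of these steps is supplied or even sketched in your proposal, so as written it is a correct plan with the decisive estimates missing rather than a proof.
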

	 \medskip

{
\begin{rem}
\label{rem:exact}
An anonymous referee pointed out to us that $\nu$ is explicit if the density of $\logZ$ is $z \mapsto \exp(z)/(1+ \exp(z))^2=1/(2 \cosh(z/2))^2$, a case satisfying hypothesis  {\bf (T-1$^\prime$)}. In fact, one directly verifies using \eqref{eq:Fn-onesided-infty} that in this case $F_\nu(x)= \log(1+ \exp(x))$. Then one directly checks that $F_\nu(x)=x+ O(\exp(-x))$ for $x \to \infty$ so \eqref{eq:stationary_asymptotics} holds with $c_\nu=1$ and, above all, $d_\nu=0$. 
We are not aware of other exact solutions.  
\end{rem}
}


       
The proof of \Cref{thm:stationary_asymptotics} is in \cref{sec:stationary}. 
When applying 
\Cref{thm:stationary_asymptotics} we will set $c_\nu=1$, see \eqref{eq:asymptlar}.

\subsection{Contraction estimates}

Next is the following improvement of \cite[Prop.~2.2]{cf:GG22}, based on a  result and approach developed by Q.~Moulard \cite{cf:QM}.

\medskip 

\begin{theorem}
Choose $\gz$ satisfying \eqref{eq:critical} and
	such  that $\gs^2:=\bbE[\logZ^2] < \infty$. Then
	there exists $\gG _0$  such that for every $\gG  \ge \gG _0$ and every
	 $G=G_1-G_2$, with $G_1$ and $G_2$ risk functions of $\bbL^1$ random variables, we have  
	\begin{equation} 
	\label{eq:bound}
	\sum_{n=0}^\infty \left \Vert T_{\gG,0}^n G \right\Vert_1 \, \le \,  \frac{\cfromQM}{\gs^2} \, \gG^2  \left \Vert G \right\Vert_1\, ,
	\end{equation}  where $T_{\gG,0}$ is defined in \eqref{eq:TG0_def}.
	\label{thm:contractive}
\end{theorem}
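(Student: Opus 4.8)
The plan is to pass to the $\bbL^\infty$-adjoint of $T_{\gG,0}$, turning the assertion into an estimate for a Feynman--Kac functional of the Furstenberg chain $X$ of \eqref{eq:XMC}, and then to exploit that $X$ is a centered random walk perturbed by a drift whose derivative is precisely the defect $1-h_\gG'$ that produces the contraction. Note first that the class of test functions is preserved and the left side is finite: if $G=G_{\mu_1}-G_{\mu_2}$ with $\mu_1,\mu_2$ laws of $\bbL^1$ variables then $G(-\infty)=0$, hence $T_{\gG,0}G=T_\gG G=G_{\mu_1 P}-G_{\mu_2 P}$ by \eqref{eq:GXTn} applied to each $G_{\mu_i}$, where $P$ is the kernel of $X$; since $\mu_i P$ has finite mean ($\bbE_{\mu_i P}|X_1|\le\gG+\bbE|\logZ|<\infty$, using $\gs^2<\infty$), iterating keeps $T_{\gG,0}^nG$ in the class, with $\|T_{\gG,0}^nG\|_1=\mathrm{d}_{W,1}(\mu_1 P^n,\mu_2 P^n)<\infty$. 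By homogeneity assume $\|G\|_1=1$. The substitution $y=h_\gG^{-1}(x-z)$ in \eqref{eq:TG0_def} (licit because $h_\gG$ is an increasing bijection of $\R$ onto $(-\gG,\gG)$ with $h_\gG'\in(0,1)$) gives, for $f\in\bbL^\infty$, $\int f\,T_{\gG,0}G=\int\big(h_\gG'(y)\,\bbE[f(h_\gG(y)+\logZ)]\big)G(y)\,\dd y$, i.e.\ $T_{\gG,0}^{\ast}f=h_\gG'\cdot(Pf)$; iterating and using the Markov property,
\[(T_{\gG,0}^{\ast})^nf(x)=\bbE_x\Big[f(X_n)\prod_{k=0}^{n-1}h_\gG'(X_k)\Big],\qquad\big|(T_{\gG,0}^{\ast})^nf(x)\big|\le\psi_n(x):=\bbE_x\Big[\prod_{k=0}^{n-1}h_\gG'(X_k)\Big].\]
By $\bbL^1$--$\bbL^\infty$ duality $\|T_{\gG,0}^nG\|_1=\sup_{\|f\|_\infty\le1}|\int(T_{\gG,0}^{\ast})^nf\cdot G|\le\|\psi_n\|_\infty$, so it suffices to prove $\sum_{n\ge0}\|\psi_n\|_\infty\le\cfromQM\,\gG^2/\gs^2$. (Since $\|T_{\gG,0}^nG\|_1=\mathrm{d}_{W,1}(\mu_1 P^n,\mu_2 P^n)$, this is a summed Wasserstein-$1$ residual-contraction bound, the probabilistic counterpart of the estimate of \cite{cf:QM}.)

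As $h_\gG'\le1$, $\psi_n$ is nonincreasing in $n$ and $\psi_{n+T}(x)=\bbE_x[\prod_{k<n}h_\gG'(X_k)\,\psi_T(X_n)]\le\|\psi_T\|_\infty\,\psi_n(x)$; hence $\|\psi_n\|_\infty\le\|\psi_T\|_\infty^{\lfloor n/T\rfloor}$ and $\sum_n\|\psi_n\|_\infty\le T/(1-\|\psi_T\|_\infty)$. So everything reduces to a one-block residual contraction: to produce, uniformly in $\gG\ge\gG_0$ (with $\gG_0=\gG_0(\gz)$), a time $T=\lceil C\gG^2/\gs^2\rceil$ and $\rho_\ast<1$ with $\|\psi_T\|_\infty\le\rho_\ast$ and $C/(1-\rho_\ast)<\cfromQM$ --- the factor $\gG^2/\gs^2$ being the diffusive time on which a walk confined to $(-\gG,\gG)$ reaches the boundary layer where $h_\gG'$ falls appreciably below $1$.

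For this, write $g_\gG(x):=x-h_\gG(x)$; by \eqref{eq:almost-id} it is odd, $g_\gG'=1-h_\gG'\in(0,1)$, and $g_\gG'(x)=\tfrac1{1+e^{\gG-x}}+\tfrac1{1+e^{\gG+x}}$ is easily seen to be nondecreasing on $[0,\infty)$, whence $g_\gG$ is convex there and $g_\gG'(x)\ge g_\gG(x)/x$ for $x\ne0$. With $-\log t\ge1-t$ and $X_{k+1}-X_k=\logZ_{k+1}-g_\gG(X_k)$ from \eqref{eq:XMC},
\[\prod_{k<T}h_\gG'(X_k)\le\exp\Big(-\!\sum_{k<T}g_\gG'(X_k)\Big),\qquad\sum_{k=1}^{T-1}g_\gG(X_k)=S'_T-(X_T-X_1),\quad S'_T:=\sum_{j=2}^{T}\logZ_j.\]
Since $h_\gG$ takes values in $(-\gG,\gG)$, $|X_j|<\gG+|\logZ_j|$ for every $j\ge1$, whatever $X_0$; so on $B_R:=\{|S'_T|\ge R\gG\}\cap\{\max_{j\le T}|\logZ_j|\le\gG\}\cap\{|\logZ_1|+|\logZ_T|\le\gG\}$, which is independent of $X_0$, one has $\max_{1\le k\le T-1}|X_k|<2\gG$ and $|X_T-X_1|<3\gG$, hence $\sum_{k<T}g_\gG'(X_k)\ge\tfrac1{2\gG}\sum_{k=1}^{T-1}|g_\gG(X_k)|\ge(|S'_T|-3\gG)/(2\gG)\ge(R-3)/2$ for any $R>3$; therefore $\|\psi_T\|_\infty\le1-(1-e^{-(R-3)/2})\bbP(B_R)$. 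With $T=\lceil C\gG^2/\gs^2\rceil$ the variance of $S'_T$ is $(1+o(1))C\gG^2$, so the (plain) CLT gives $\bbP(|S'_T|\ge R\gG)\to\bbP(|\cN(0,1)|\ge R/\sqrt C)$, while $\bbP(\max_{j\le T}|\logZ_j|>\gG)\le(2C/\gs^2)\bbE[\logZ^2;|\logZ|>\gG]\to0$ and $\bbP(|\logZ_1|+|\logZ_T|>\gG)\to0$ --- using only $\gs^2<\infty$. Thus for $\gG\ge\gG_0$, $\bbP(B_R)\ge p_0$ with $p_0$ as close as desired to $\bbP(|\cN(0,1)|\ge R/\sqrt C)$, and $\rho_\ast:=1-(1-e^{-(R-3)/2})p_0<1$.

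What remains is purely quantitative: choosing $C$ and $R$, and trimming the wasteful steps above (keeping $|X_k|$ of order $\gG$ rather than $2\gG$ on a high-probability event, being economical with the $O(\gG)$ boundary terms, and not discarding too much in $-\log t\ge1-t$), so as to push $C/(1-\rho_\ast)$ below $\cfromQM$; this is where the explicit constant comes from, and I expect it to be the real bookkeeping obstacle. The one structural constraint worth noting is that only a second moment of $\logZ$ is assumed, so the CLT used above carries no Berry--Esseen rate; this is exactly why $\gG_0$ must be allowed to depend on $\gz$ and why the statement is confined to large $\gG$.
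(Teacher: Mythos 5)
Your route is genuinely different from the paper's and is structurally sound. The paper never passes to the adjoint: it bounds the derivative of the projective map of a matrix product by the entry functional $\tau(A)=\det A/(A_{11}A_{22})$, gets $\|T_{\gG,0}^nG\|_1\le \bbE[\tau(M_1\cdots M_n)]\|G\|_1$ via the Wasserstein/Lipschitz observation, and then uses the purely algebraic bound $\tau(M_1\cdots M_n)\le\min\bigl(1,e^{2\gG}e^{-|\sum_{j=2}^n\logZ_j|}\bigr)$, so that on the CLT event $\{|\sum_{j=2}^n\logZ_j|\ge3\gG\}$ the contraction factor is $e^{-\gG}$, essentially zero; with $n=\lceil 37(\gG/\gs)^2\rceil$ this gives a one-block factor $\le 1/2$ and the sum $\le 2n\le 75\,\gG^2/\gs^2$ with no optimization. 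You instead work pathwise with the chain $X$: the adjoint identity $T_{\gG,0}^{\ast}f=h_\gG'\cdot Pf$, the Feynman--Kac representation, the submultiplicativity of $\psi_n$, the telescoping $\sum_{k=1}^{T-1}g_\gG(X_k)=S'_T-(X_T-X_1)$ and the convexity inequality $g_\gG'(x)\ge g_\gG(x)/x$ are all correct (I checked $g_\gG'$ is indeed nondecreasing on $[0,\infty)$), and the a priori bound $|X_k|\le\gG+|\logZ_k|$ plus a plain CLT under $\gs^2<\infty$ legitimately produces a one-block residual contraction at time $T\asymp\gG^2/\gs^2$. Your approach buys a probabilistic reading of the mechanism (the contraction is the accumulated drift defect $1-h_\gG'$ along the path) and, as you note, a summed Wasserstein-$1$ bound; the paper's buys a much stronger good-event factor and hence the explicit constant for free.

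The genuine shortfall is exactly the point you defer as ``bookkeeping'': with the statement's explicit constant $\cfromQM=75$, your argument as written does not close. On your event $B_R$ the contraction factor is only $e^{-(R-3)/2}$ (a constant, not exponentially small in $\gG$), and the final bound is $C\big/\bigl[(1-e^{-(R-3)/2})\,2(1-\Phi(R/\sqrt C))\bigr]$ up to $o(1)$. Optimizing separately, $\min_{R>3}R^2/(1-e^{-(R-3)/2})\approx 38.4$ (near $R\approx4.5$) and $\max_u 2u^2(1-\Phi(u))\approx 0.33$ (near $u\approx1.2$), so the best constant your written bounds allow is about $116\,\gG^2/\gs^2>75\,\gG^2/\gs^2$. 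So the trimming is not optional: you must actually implement it, e.g.\ replace the event $\{\max_{j\le T}|\logZ_j|\le\gG\}$ by $\{\max_{j\le T}|\logZ_j|\le\gep\gG\}$ (still of probability $1-o(1)$, since $T\,\bbP(|\logZ|>\gep\gG)\le C\gs^{-2}\gep^{-2}\,\bbE[\logZ^2;|\logZ|>\gep\gG]\to0$), which gives $|X_k|\le(1+\gep)\gG$, $|X_T-X_1|\le2(1+\gep)\gG$, hence an exponent $\ge (R-2(1+\gep))/(1+\gep)$; then $\min_R R^2/(1-e^{-(R-2)})\approx14.2$ and the achievable constant drops to roughly $43\,\gG^2/\gs^2<75\,\gG^2/\gs^2$. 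With that one modification carried out (and the ceiling in $T$ and the CLT error absorbed into $\gG_0$), your proof does establish the theorem as stated; without it, the claimed inequality with the constant $75$ is not proved.
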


\medskip

If $G$ is the risk function of a random variable we introduce
\begin{equation}
\label{eq:L_k}
L_\gG [G]\,:=\, \int_ \bbR \frac{1} {1+e^{\gG -x}} G(x) \dd x\, ,
\end{equation}
and one readily sees that, from \eqref{eq:Lyap-k}, 
the    Lyapunov exponent $\cL(\gG )$ is equal to $L_\gG [G_{\nu_\gG }]$, and that
\begin{equation}
\label{eq:Ly}
\left \vert L_\gG [G_1] - L_\gG [G_2 ]\right \vert \, \le \, \Vert G_1 -G_2 \Vert_1\, ,
\end{equation}
whenever $G_1$ and $G_2$ are the risk functions of two random variables.
We thus have the following important corollary to \cref{thm:contractive}:
\medskip

\begin{cor}
\label{th:bound-cor}
With  $\gG _0$ as in \cref{thm:contractive},
for $\gG  \ge \gG _0$ and for any probability $\gamma $  such that 
$\int _\bbR \vert x\vert \gamma (\dd x) < \infty$ ($\gamma$ may depend on $\gG$), 
we have
\begin{equation} 
\label{eq:bound-cor}
\left \vert \cL(\gG )- L_\gG [G_\gamma] \right \vert \, \le\, \left \Vert G_{\nu_\gG } - G _ \gamma \right \Vert _1\, \le  \,
 \frac{\cfromQM}{\gs^2} \, \gG^2
 \left \Vert T_\gG G_\gamma - G_\gamma \right\Vert_1\, .
\end{equation}
\end{cor}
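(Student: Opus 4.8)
The corollary follows by chaining the triangle inequality with \eqref{eq:Ly} and \eqref{eq:bound}. The plan is as follows.

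\medskip

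\textbf{Step 1: reduce to controlling $\Vert G_{\nu_\gG} - G_\gamma \Vert_1$.} Since $\cL(\gG) = L_\gG[G_{\nu_\gG}]$ by \eqref{eq:Lyap-k} and the definition \eqref{eq:L_k}, and since $\nu_\gG$ is a probability with finite first moment (Remark~\ref{rem:sym1}) while $\gamma$ is assumed to have finite first moment, both $G_{\nu_\gG}$ and $G_\gamma$ are risk functions of $\bbL^1$ random variables. Applying \eqref{eq:Ly} with $G_1 = G_{\nu_\gG}$ and $G_2 = G_\gamma$ gives immediately
\begin{equation}
\left\vert \cL(\gG) - L_\gG[G_\gamma] \right\vert \le \left\Vert G_{\nu_\gG} - G_\gamma \right\Vert_1\,,
\end{equation}
which is the first inequality in \eqref{eq:bound-cor}. (One should check that $G_{\nu_\gG}-G_\gamma \in \bbL^1$: both risk functions are monotone, bounded by $1$, and differ from their limits at $\pm\infty$ by integrable tails thanks to the first-moment bounds, so the difference is integrable; alternatively this is subsumed by Step 2.)

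\medskip

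\textbf{Step 2: express $G_{\nu_\gG} - G_\gamma$ as a telescoping sum and apply \Cref{thm:contractive}.} The key observation is that $G_{\nu_\gG}$ is the fixed point of $T_\gG$, i.e.\ $T_\gG G_{\nu_\gG} = G_{\nu_\gG}$, since $\nu_\gG$ is invariant for the chain $X$ and \eqref{eq:GXTn} holds at stationarity. Writing $G := G_\gamma$, set $H := T_\gG G - G$. Because $\nu_\gG$ and $\gamma$ are both probabilities, $G_{\nu_\gG}(-\infty) = G(-\infty) = 1$, so $H(-\infty)=0$ and hence $T_\gG$ acts on $H$ as $T_{\gG,0}$; moreover $H \in \bbL^1$ by \eqref{eq:Tbound0} applied to $G-G_\gamma$ (formally, $T_\gG G - G$ is a difference of risk functions of probability measures with finite first moment, hence $\bbL^1$). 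Now iterate: for each $N$,
\begin{equation}
T_\gG^{N+1} G - G \,=\, \sum_{n=0}^{N} T_{\gG,0}^n\big(T_\gG G - G\big) \,=\, \sum_{n=0}^{N} T_{\gG,0}^n H\,,
\end{equation}
using $T_\gG^{n+1}G - T_\gG^n G = T_{\gG,0}^n(T_\gG G - G)$ (valid since at each step the argument has total mass difference zero). By \eqref{eq:Tbound0}, $\Vert T_{\gG,0}^n H\Vert_1 \le \Vert H \Vert_1$, so the partial sums are Cauchy in $\bbL^1$ by \Cref{thm:contractive} applied to $H=H_1-H_2$ (a difference of risk functions of $\bbL^1$ random variables); thus $T_\gG^{N+1} G \to G + \sum_{n\ge 0} T_{\gG,0}^n H$ in $\bbL^1$ as $N\to\infty$. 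On the other hand, the contraction property of the $X$-chain (\cite[Lemma~2.2]{cf:CGH2}, also underlying uniqueness of $\nu_\gG$) forces $G_{X_n} = T_\gG^n G_\gamma \to G_{\nu_\gG}$, and this convergence can be upgraded to $\bbL^1$ (or one simply identifies the $\bbL^1$-limit with $G_{\nu_\gG}$ by uniqueness of the invariant risk function among $\bbL^1$-limits of iterates). Therefore
\begin{equation}
G_{\nu_\gG} - G_\gamma \,=\, \sum_{n=0}^\infty T_{\gG,0}^n\big(T_\gG G_\gamma - G_\gamma\big)\,,
\end{equation}
and taking $\Vert\cdot\Vert_1$ norms and invoking \eqref{eq:bound} with $G \rightsquigarrow T_\gG G_\gamma - G_\gamma$ yields
\begin{equation}
\left\Vert G_{\nu_\gG} - G_\gamma \right\Vert_1 \,\le\, \sum_{n=0}^\infty \left\Vert T_{\gG,0}^n\big(T_\gG G_\gamma - G_\gamma\big)\right\Vert_1 \,\le\, \frac{\cfromQM}{\gs^2}\,\gG^2\,\left\Vert T_\gG G_\gamma - G_\gamma\right\Vert_1\,,
\end{equation}
which is the second inequality in \eqref{eq:bound-cor}. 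Combining with Step 1 finishes the proof.

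\medskip

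\textbf{Main obstacle.} The only genuinely delicate point is the identification of the $\bbL^1$-limit of $T_\gG^n G_\gamma$ with $G_{\nu_\gG}$: the telescoping argument shows the iterates converge in $\bbL^1$ to \emph{some} function, and the Furstenberg/contraction theory shows they converge pointwise (or weakly) to $G_{\nu_\gG}$, so one must check these two limits agree — e.g.\ by extracting an a.e.-convergent subsequence from the $\bbL^1$-convergent sequence, or by noting that $\bbL^1$ convergence of $T_\gG^n G_\gamma$ plus continuity of $T_\gG$ on this class (from \eqref{eq:Tbound0}) forces the limit to be a fixed point of $T_\gG$ with total mass $1$, which is $G_{\nu_\gG}$ by uniqueness of the invariant probability. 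Everything else is a routine application of the triangle inequality, \eqref{eq:Ly}, \eqref{eq:Tbound0} and \Cref{thm:contractive}.
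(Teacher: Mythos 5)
Your proposal is correct and follows essentially the same route as the paper: reduce to $\Vert G_{\nu_\gG}-G_\gamma\Vert_1$ via \eqref{eq:Ly}, telescope $T_\gG^N G_\gamma-G_\gamma$ into $\sum_n T_{\gG,0}^n(T_\gG G_\gamma-G_\gamma)$, and invoke \Cref{thm:contractive}, after noting $\Vert T_\gG G_\gamma-G_\gamma\Vert_1<\infty$ from the first-moment bound on $X_1$. The only (inessential) difference is in handling the limit: the paper uses pointwise convergence of $T_\gG^N G_\gamma$ to $G_{\nu_\gG}$ at continuity points together with Fatou's lemma, which sidesteps the identification of the $\bbL^1$-limit that you carry out via absolute convergence and uniqueness of the fixed point.
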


\begin{rem} 
\label{rem:sym2}
The alternative expression for $\cL(\gG)$ in \eqref{eq:Lyap-k2}  suggests introducing 
\begin{equation}
\label{eq:L_k2}
L^{\star}_\gG [G]\,:=\,  \int_ \bbR \frac{1} {1+e^{\gG +x}} (1-G(x)) \dd x
\end{equation}
instead of $L_\Gamma$.
In general $L_\gG [G]\neq L^{\star}_\gG [G]$,  but 
$L_\gG [G_{\nu_\gG}]= L^{\star}_\gG [G_{\nu_\gG}]$. Moreover \cref{th:bound-cor} applies verbatim to $L^{\star}_\gG[\, \cdot\, ]$. 
\end{rem}

\medskip
\noindent 
\emph{Proof.} Choose $X_0$ of law $\gamma$ and $X_1$ as in  \eqref{eq:XMC}. By construction $G_{X_1}=T_\gG G_\gamma$
and, by the observation made right after \eqref{eq:XMC}, we have that $\vert X_1\vert \le \gG+ \vert \logZ_1\vert \in \bbL^2$.
Since  the Wasserstein-1 distance between the two $\bbL^1$ random variables $X_0$ and $X_1$ is equal to $\Vert T_\gG G_\gamma - G_\gamma \Vert_1$, we have that $\Vert T_\gG G_\gamma - G_\gamma \Vert_1 < \infty$.

Since the MC $(X_n)$ is positive recurrent, $\lim_{N\to\infty} T_\gG^N  G_\gamma (x)=
G_{\nu_\gG }(x)$ for every $x \in \bbR$ which is a continuity point of $G_{\nu_\gG }(\cdot)$, hence this convergence holds out of a set 
that is at most countable.
Therefore, by Fatou's Lemma and \Cref{thm:contractive}, we have that 
\begin{eqnarray}
\left \Vert G_{\nu_\gG } - G _ \gamma \right \Vert _1 \, &\le &
  \liminf_{N\to\infty} \left \Vert T_\gG^N G_\gamma - G_\gamma \right\Vert_1
\\
&  \le &  
 \sum_{n =0}^\infty  \left \Vert T_{\gG,0}^n \left( T_\gG G_\gamma - G_\gamma\right) \right\Vert_1 \, \\
 &\le & 
  \frac{\cfromQM}{\gs^2} \, \gG^2
   \left \Vert T_\gG G_\gamma - G_\gamma \right\Vert_1\, ,
\end{eqnarray}
and the proof is complete by \cref{eq:Ly}.
\qed

\subsection{Approximating the Furstenberg probability and the proof of \Cref{th:main}}
We are now going to patch together the  (infinite!) invariant measure of the $Y$ process, translated to the left of $\gG$,
and  the  (also infinite, of course) invariant measure of the $Y$ process driven by $-\logZ$ instead of  $\logZ$, reflected with respect to the origin
and translated to the right of $\gG$, see Fig.~\ref{fig:2}, to form a probability measure that we call $\gamma_\gG$.
We expect (and will show) that $\gamma_\gG$ is close to the invariant probability $\nu_\gG$ of the $X$ MC. 

\begin{figure}[h]
\centering
\includegraphics[width=13.5 cm]{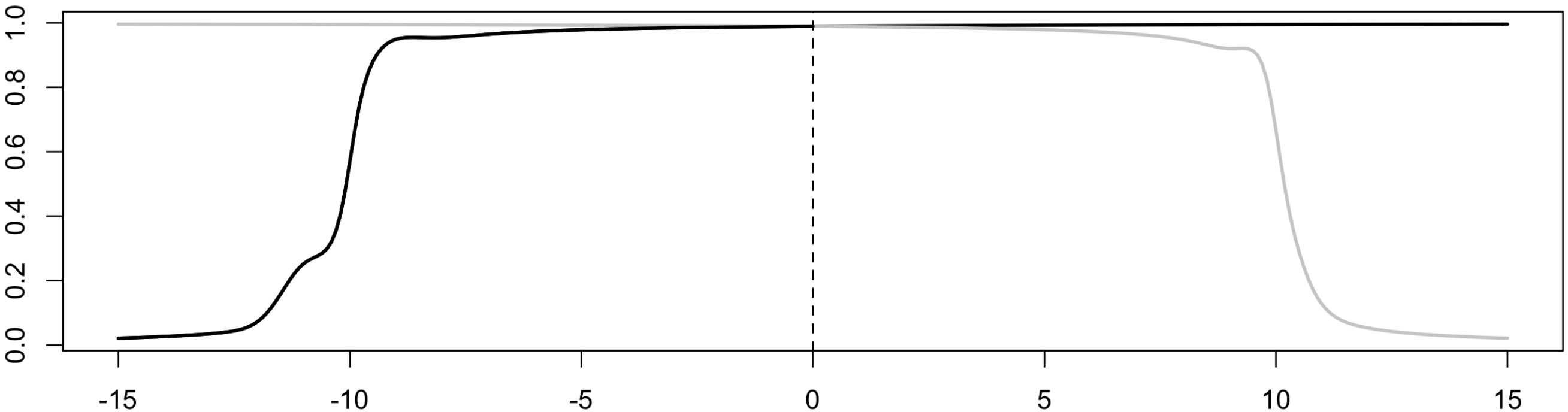}
\caption{\label{fig:2} For $\gG =10$ we have drawn in black (respectively, in grey) the density of the invariant measure of the $Y$ process
translated of $-\Gamma$ (respectively, of the  $Y$ process driven by $-\logZ$, reflected with respect to the origin and then  translated  of $+ \Gamma$): one can readily see that the densities of these two measure exist if $\logZ$ has a density. Note also that, in particular,the cumulative function associated to the black density function is $F_\lar(\cdot+\gG)$, 
Therefore, up to a multiplicative factor, the density of the probability $\gamma_\gG$ is given by the black line on the left of the origin and by the grey one on the right.
}
\end{figure}


Let $F_\lar(x) := c_\nu^{-1} \int \ind_{(-\infty,x]}(y) \nu (\dd y)$ for $x \in \bbR$, where $\nu$ and $c_\nu$ are as in \cref{thm:stationary_asymptotics}. So $F_\lar(\cdot)$ is the cumulative function of the (suitably normalized) stationary measure of $Y$  
and Theorem~\ref{thm:stationary_asymptotics} yields
\begin{equation}
	\label{eq:asymptlar}
	F_\lar(x)\stackrel{ x \to \infty}= x+ c_\lar + \begin{cases} O\left(x^{-(\xi-3)}\right) & \text{ under {\bf (T-1)}}, 
	\\
	O\left(\exp(-\gd x)\right) & \text{ under {\bf (T-1$^\prime$)}}.
	\end{cases}  
\end{equation}
Let $F_\rar$ be the same with $\logZ$ replaced by $-\logZ$, which describes the related object for the right edge instead of the left edge, which then has comparable asymptotic behavior.  Using this, we define an approximate stationary distribution $\gamma_\Gamma$ by 
\begin{equation}
\label{eq:DHprobability}
G_{\gamma_\gG} (x)\, :=\, \begin{cases} 
F_{\rar}(\gG -x) / C_\gG   & \text{ if } x \ge 0\, ,
\\
1- \left(F_\lar(x+\gG )/C_\gG \right) & \text{ if } x \le 0\, ,
\end{cases}\ \ \ \text{ with } C_\Gamma :=
	F_{\lar}(\gG) + F_{\rar}(\gG)\,.
\end{equation}
Note that $C_\Gamma$ has been selected so that $G_{\gamma_\gG}(0)$ is coherently chosen.
We register also that for the cumulative probability  $F_{\gamma_\gG} (\cdot)=1-G_{\gamma_\gG} (\cdot)$ we have
\begin{equation}
\label{eq:cumFk}
F_{\gamma_\gG} (x)\, :=\, \begin{cases} 
1-\left(F_{\rar}(\gG -x) / C_\gG \right)  & \text{ if } x \ge 0\, ,
\\
F_\lar(x+\gG )/C_\gG  & \text{ if } x \le 0\, .
\end{cases}
\end{equation}
Note also that, by the explicit bound on $\nu((-\infty,x])$ for $x \le 0$ in  Prop.~\ref{th:Y}, the tail behavior(s) of $\gamma_\gG$
is controlled by the tail behavior(s) of $\logZ$, in particular we have $\int_\bbR \vert x \vert \gamma_\gG(\dd x) < \infty$, which is required to apply 
\cref{th:bound-cor}.

Set
\begin{equation}
\label{eq:kappa2}
\kappa_2\, :=\, \frac{c_\lar +c_\rar}2\, .
\end{equation}
{It is worth pointing out that in the exactly computable case of Remark~\ref{rem:exact} we have $c_\lar$ is equal to $c_\rar$, because the law of $\logZ$ is symmetric,
and they are equal to zero. Hence $\kappa_2=0$. While we do not know other exactly computable cases, in \cite[(8.1.11), see also (4.1.9)]{cf:Othesis}
one can find a conjecture for the exact value of $\kappa_2$ that involves the ladder and excursion process of the random walk with increments $\logZ$. Moreover in 
\cite[(4.34)]{cf:NL86} one finds an explicit expression  for $\kappa_2$ for a specific family of laws of $\logZ$: also in this second case the expression is a conjecture from a mathematics standpoint. All these expressions point to the dependence of $\kappa_2$ on fine details of the law of $\logZ$.}

\medskip

\begin{proposition}
\label{thm:onestep}
	Assume that both $\logZ$ and $-\logZ$ satisfy the assumptions  of \cref{thm:stationary_asymptotics} (in particular, there exists $\xi>3$ such that $\logZ\in \bbL^{\xi}$, so {\bf (T-2)} becomes superfluous).
	Then for  a suitably chosen $\gd>0$
\begin{equation}
\label{eq:Ck}
	C_\Gamma 
	\stackrel{\Gamma \to \infty}=
	2\gG  + 2 \kappa_2 + \begin{cases} O\left(\gG^{-(\xi -3)}\right) & \text{ under {\bf (T-1)} for both $\pm \logZ$}, 
	\\
	O\left(\exp(-\gd \gG)\right) & \text{ under {\bf (T-1$^\prime$)} for both $\pm \logZ$ }.
	\end{cases} 
\end{equation}  
Moreover
	\begin{equation}
		\label{eq:Lyap}
		L_\gG [G_{\gamma_\gG} ]\,=\, \frac 1{C_\gG }\int_\bbR \frac{F_\rar(y)}{1+e^y} \dd y+ O(\exp(-\gG ))
		\,,
	\end{equation}
		and
	\begin{equation}
		\left\| T_\gG  G_{\gamma_\gG}  - G_{\gamma_\gG}  \right\|_1
		\,=\,
		\begin{cases} O\left(\gG^{-\xi+2}\right) & \text{ under {\bf (T-1)} for both $\pm \logZ$}, 
	\\
	O\left(\exp(-\gd \gG)\right) & \text{ under {\bf (T-1$^\prime$)} for both $\pm \logZ$}.
	\end{cases} 
		\label{eq:onestep}
	\end{equation}
\end{proposition}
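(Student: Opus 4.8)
The three displays are handled separately; \eqref{eq:Ck} and \eqref{eq:Lyap} are short, and \eqref{eq:onestep} carries the weight.

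\medskip
\noindent\emph{The first two displays.} For \eqref{eq:Ck}: by definition $C_\gG = F_\lar(\gG)+F_\rar(\gG)$, and \eqref{eq:asymptlar} — valid for both $F_\lar$ and the mirror object $F_\rar$, since under the standing assumptions $-\logZ$ also satisfies the hypotheses of \Cref{thm:stationary_asymptotics} with the same $\xi$ — gives $F_\lar(\gG)=\gG+c_\lar+O(\gG^{-(\xi-3)})$ and $F_\rar(\gG)=\gG+c_\rar+O(\gG^{-(\xi-3)})$; adding and recalling $\kappa_2=(c_\lar+c_\rar)/2$ from \eqref{eq:kappa2} yields the claim, the error being exponentially small under {\bf (T-1$^\prime$)}. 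For \eqref{eq:Lyap}: insert $G_{\gamma_\gG}$ of \eqref{eq:DHprobability} into $L_\gG[\cdot]$ and split at $x=0$. On $(-\infty,0]$ one has $1/(1+e^{\gG-x})\le e^{-\gG+x}$ and $0\le G_{\gamma_\gG}\le 1$, so that part is $O(e^{-\gG})$. On $[0,\infty)$ the substitution $y=\gG-x$ turns the integral into $C_\gG^{-1}\int_{-\infty}^{\gG} F_\rar(y)/(1+e^y)\,\dd y$; since $F_\rar(y)=O(y)$ as $y\to\infty$ against $e^{-y}$, extending the upper limit to $+\infty$ costs $O(e^{-\gG}/\gG)$, and the resulting integral over $\bbR$ is finite because $F_\rar(y)\sim y$ at $+\infty$ and, by the explicit bound in \Cref{th:Y}, $F_\rar(y)$ decays like $\bbP(\logZ>-y)$ at $-\infty$ (integrable since $\logZ\in\bbL^\xi$).

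\medskip
\noindent\emph{Setup for \eqref{eq:onestep}, and the tails and edge zones.} Since $T_\gG$ sends the risk function of a law to that of its one-step image under \eqref{eq:XMC}, we have $\|T_\gG G_{\gamma_\gG}-G_{\gamma_\gG}\|_1=\|T_\gG F_{\gamma_\gG}-F_{\gamma_\gG}\|_1=\mathrm{d}_{W,1}(\gamma_\gG,T_\gG\gamma_\gG)$, and the plan is to bound $|T_\gG F_{\gamma_\gG}(x)-F_{\gamma_\gG}(x)|$ on four regions and integrate: the far tails $x\le -\gG-\gG^b$ and $x\ge \gG+\gG^b$; the two edge zones $-\gG-\gG^b\le x\le-\gG+\gG^a$ and $\gG-\gG^a\le x\le \gG+\gG^b$; and the bulk $-\gG+\gG^a\le x\le\gG-\gG^a$, with $a,b\in(0,1)$ fixed close enough to $1$ at the end. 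On the far tails both $F_{\gamma_\gG}$ and $T_\gG F_{\gamma_\gG}$ (the latter read off from \eqref{eq:FXT}) are, up to the factor $C_\gG^{-1}=O(\gG^{-1})$, of the size of the tail of $\logZ$ shifted by $\mp\gG$, by the explicit bound on $\nu((-\infty,x])$ for $x\le 0$ in \Cref{th:Y}, so their contribution is $O(\gG^{-1}\gG^{-b(\xi-1)})$ under {\bf (T-1)} ($O(\exp(-\gd\gG))$ under {\bf (T-1$^\prime$)}). On the left edge zone, writing $x=-\gG+u$ and using (i) that near $-\gG$ the map $h_\gG$ converges to the reduced-chain map $h$ of \eqref{eq:iterY} with error $O(\exp(-c\gG))$ uniformly on the zone (this is \eqref{eq:almost-id} and the $\hup_\gG$ computation) and (ii) that there $F_{\gamma_\gG}(-\gG+\cdot)=C_\gG^{-1}F_\lar(\cdot)$ is an \emph{exact} fixed point of the corresponding one-step operator by \eqref{eq:Fn-onesided-infty}, the difference $T_\gG F_{\gamma_\gG}(-\gG+u)-F_{\gamma_\gG}(-\gG+u)$ collapses to: the $h_\gG$-versus-$h$ discrepancy (exponentially small); the defect from truncating the integration range in \eqref{eq:Fn-onesided-infty} from $(-\infty,u)$ to $(-2\gG+u,u)$, which by the linear growth of $F_\lar$ and the tail of $\logZ$ is $O(\gG^{-\xi})$ per $x$; and the contribution of those $z$ for which $h_\gG^{-1}(-\gG+u-z)$ crosses into $[0,\infty)$, which requires $z$ in a far tail and so carries mass $O(\gG^{-a\xi})$. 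Integrating over the zone stays well below $\gG^{-\xi+2}$; the right edge zone is identical after reflecting $\logZ\to-\logZ$ and using that $h_\gG$ is odd.

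\medskip
\noindent\emph{The bulk.} On $(-\gG,\gG)$ one checks from \eqref{eq:almost-id} that $h_\gG^{-1}(y)=y+O(\exp(-(\gG-|y|)))$, so on the bulk $T_\gG$ acts, up to a super-polynomially small error, by $F\mapsto\int_{(x-\gG,\,x+\gG)}F(x-z)\,\gz(\dd z)$. Write $F_{\gamma_\gG}=\ell_\gG+C_\gG^{-1}\rho_\gG$ on the bulk, with $\ell_\gG$ the affine function of slope $C_\gG^{-1}$ prescribed by the leading terms of \eqref{eq:asymptlar} and $\rho_\gG$ the remainder, so that by \Cref{thm:stationary_asymptotics}
\begin{equation}
\label{eq:rho-bound-plan}
|\rho_\gG(x)|\,=\,O\!\left(\mathrm{dist}(x,\{-\gG,\gG\})^{-(\xi-3)}\right)\quad\text{uniformly on the bulk}
\end{equation}
(exponentially small under {\bf (T-1$^\prime$)}). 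Because $\logZ$ is centered and $\ell_\gG$ is affine, $\int_\bbR\ell_\gG(x-z)\,\gz(\dd z)=\ell_\gG(x)$ exactly, so the $\ell_\gG$-part of $T_\gG F_{\gamma_\gG}(x)-F_{\gamma_\gG}(x)$ is only the defect from restricting the convolution to $z\in(x-\gG,x+\gG)$; on the bulk the complement lies in $\{|\logZ|\ge\gG^a\}$, giving a per-$x$ bound $O(\gG^{-a\xi})+C_\gG^{-1}O(\gG^{-a(\xi-1)})$. The $\rho_\gG$-part is $C_\gG^{-1}\bigl(\int\rho_\gG(x-z)\,\gz(\dd z)-\rho_\gG(x)\bigr)$ up to a similar truncation defect; here the crude bound by $|\rho_\gG|$ is not affordable, and one must again use the centering of $\logZ$ (with a finite second — indeed higher — moment) to extract a smoothing gain from this convolution-minus-identity, invoking the regularity of $\nu$ afforded by {\bf (H-1)} — equivalently, the quantitative control of the increments of $\rho_\gG$ that is built into the renewal representation behind \Cref{thm:stationary_asymptotics} — to gain essentially two more powers of $\mathrm{dist}(x,\{-\gG,\gG\})$ and thus a pointwise bound $O(\mathrm{dist}(x,\{-\gG,\gG\})^{-(\xi-1)})$. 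Integrating every pointwise bound over the bulk (length $O(\gG)$) and over the edge zones, and taking $a,b\in(0,1)$ large enough, collects all terms into $O(\gG^{-\xi+2})$; under {\bf (T-1$^\prime$)} all errors are exponentially small, giving the second case.

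\medskip
\noindent\emph{Main obstacle.} The decisive point is the bulk estimate for $\rho_\gG$: its size alone, integrated over the bulk, is not small enough, so the argument genuinely rests on exploiting $\bbE[\logZ]=0$ to gain from $\int\rho_\gG(x-z)\,\gz(\dd z)-\rho_\gG(x)$, which forces the use not only of the size but of the \emph{regularity} of the error term in \eqref{eq:stationary_asymptotics} — this is exactly why {\bf (H-1)} and the renewal machinery of \Cref{thm:stationary_asymptotics} are needed here and not just its statement. A secondary, bookkeeping-heavy difficulty is making the far-tail, edge-zone and bulk estimates match cleanly across the interfaces at distance $\gG^a$ (resp.\ $\gG^b$) from $\pm\gG$, so that the crossover errors are subsumed in $\gG^{-\xi+2}$.
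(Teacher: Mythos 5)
Your treatment of \eqref{eq:Ck} and \eqref{eq:Lyap} is correct and essentially identical to the paper's, and your use of the exact fixed--point identity \eqref{eq:Fn-onesided-infty} for $F_\lar$ near the left edge is the right key idea. The problem is your bulk region. There you abandon the fixed--point structure, write $F_{\gamma_\gG}=\ell_\gG+C_\gG^{-1}\rho_\gG$, and, after correctly observing that the crude bound $C_\gG^{-1}\int_{\text{bulk}}\vert\rho_\gG\vert\dd x\approx \gG^{3-\xi}$ misses the target by a factor $\gG$, you propose to recover it by a ``smoothing gain'' from $\int\rho_\gG(x-z)\zeta(\dd z)-\rho_\gG(x)$, exploiting the centering of $\logZ$. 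That step requires a second--order (Taylor-type) control on $\rho_\gG$, i.e.\ quantitative regularity of the remainder in \eqref{eq:stationary_asymptotics} -- not just its size. Nothing of the sort is available: \Cref{thm:stationary_asymptotics} controls only the value of $\nu((-\infty,x])-c_\nu x-d_\nu$, and hypothesis {\bf (H-1)} (some convolution power of $\zeta$ has a density) carries no quantitative smoothness -- indeed removing the H\"older-density assumption {\bf (h-1)} of \Cref{th:main-old} is a main point of the paper, so ``the regularity built into the renewal representation'' is precisely what you cannot invoke. Without it, the only structure you have is that $\rho_\gG$ is the difference of two monotone functions, and the resulting bounds on $\int\vert\rho_\gG(x-z)-\rho_\gG(x)\vert\dd x$ (via total variation, of order $\gG$ over the bulk) give back an $O(1)$ per unit mass of $\zeta$, which is useless. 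So as written the bulk estimate is a genuine gap, and it is the decisive one, as you yourself flag.

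The paper's proof shows how to avoid the bulk problem altogether: after splitting at $0$ and reducing (by the $\logZ\to-\logZ$ symmetry) to $x\le 0$, it replaces $F_{\gamma_\gG}$ by $C_\gG^{-1}F_\lar(\cdot+\gG)$ on the \emph{whole} half-line $(-\infty,0]$, not just in an edge window of width $\gG^a$. The cost of this replacement is controlled pointwise by $\bigl\vert (F_\lar(\gG+y)+F_\rar(\gG-y))/C_\gG-1\bigr\vert$, which by \eqref{eq:asymptlar} and \eqref{eq:Ck} is $O(\gG^{-\xi+2})$ for $y\in[0,\gG/2]$ and $O(y/\gG)$ beyond, and after integration against $\zeta$ and $\dd x$ this is exactly where the final $O(\gG^{-\xi+2})$ comes from. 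Then the global identity \eqref{eq:passo-interm1} (stationarity of $F_\lar$ under the untruncated $h$-operator, with $\hup_\gG(x)=h(x+\gG)-\gG$) kills the main term everywhere, including your bulk, and what remains are only $\zeta$-tail truncation terms and the $h_\gG$-versus-$\hup_\gG$ discrepancy, all of order $O(\gG^{-\xi+1})$ or exponentially small. If you extend your edge-zone argument from the window $[-\gG-\gG^b,-\gG+\gG^a]$ to all of $x\le 0$ (and symmetrically on the right), your affine-plus-remainder decomposition and the problematic smoothing step become unnecessary, and the proof closes under the stated hypotheses.
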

\medskip

The proof of \Cref{thm:onestep} is in Section~\ref{sec:onestep}.
We have now all the ingredients for the proof of \Cref{th:main}.

\medskip

\begin{proof}
\Cref{th:main} follows immediately by combining \cref{th:bound-cor}, with the choice of $\gamma=\gamma_\gG$, and \cref{thm:onestep}: note that the value of $\gd$ in \Cref{th:main} may be chosen equal to $\gd/2$, with $\gd$ the constant appearing in \cref{thm:onestep}.
 In particular by \eqref{eq:Lyap} we obtain
\begin{equation}
	\kappa_1
	=
	\frac12
	\int_\bbR \frac{F_\rar(y)}{1+e^{y}} \dd y\, . \label{eq:kappa_1_formula-0}
\end{equation}
\noindent We note that $\kappa_1$ can also be evaluated using $F_\lar$ instead of $F_\rar$, as shown in the forthcoming Remark \ref{rem:sym3}.
\end{proof}

\section{The stationary measure of the $Y$ process}
\label{sec:stationary}

 This section is devoted to 
 the proof of Theorem~\ref{thm:stationary_asymptotics}. Let $\nu$ be a nonzero invariant measure of $Y$. By \cref{th:Y}, we have $\nu((-\infty, x])< \infty$ for every $x\in\R$. 
 
 We state a first lemma which rephrases the result of Theorem~\ref{thm:stationary_asymptotics}.
 
 \begin{lemma} Let  $\xi>3$ and $c_\nu>0$. Then, the existence of $d_\nu \in \R$ such that equation \eqref{eq:stationary_asymptotics} holds is equivalent to the fact that, as $x\to \infty$, uniformly in $x\le y \le 2x$ we have
\begin{equation} 
\label{eq:stationary_asymptotics_equiv}
 \nu((x, y])=  c_\nu (y-x)+ O\big(x^{-(\xi-3)}\big)\, .
\end{equation}
Similarly, let $c_\nu>0$, then the existence of $d_\nu \in \R$ and $\delta>0$ such that equation \eqref{eq:stationary_asymptotics} with error term replaced by $O\big(e^{-\gd x}\big)$ holds, is equivalent to the existence of $\delta>0$, such that, as $x\to \infty$, uniformly in $x\le y \le 2x$:
\begin{equation} 
\label{eq:stationary_asymptotics_equiv_expo}
 \nu((x, y])=  c_\nu (y-x)+ O\left(\exp\big(-\gd x \big)\right).
\end{equation}
\end{lemma}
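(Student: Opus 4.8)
The plan is to reduce everything to a single function. Set
$F_\nu(x) := \nu((-\infty,x])$, which is finite for every $x\in\R$ by \cref{th:Y}, and $G(x) := F_\nu(x) - c_\nu x$. Since $\nu((x,y]) = F_\nu(y)-F_\nu(x) = \big(G(y)-G(x)\big) + c_\nu(y-x)$ for $x\le y$, condition \eqref{eq:stationary_asymptotics_equiv} is literally the assertion that $G(y)-G(x) = O(x^{-(\xi-3)})$ uniformly over $x\le y\le 2x$ as $x\to\infty$, while \eqref{eq:stationary_asymptotics} is the assertion that $G$ has a finite limit $d_\nu$ at $+\infty$ with $G(x)-d_\nu = O(x^{-(\xi-3)})$. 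The exponential variants are the same with $x^{-(\xi-3)}$ replaced by $e^{-\gd x}$, so it suffices to argue the equivalence of these two properties of $G$.

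The implication \eqref{eq:stationary_asymptotics} $\Rightarrow$ \eqref{eq:stationary_asymptotics_equiv} is immediate: for $x\le y\le 2x$ we have $|G(y)-G(x)| \le |G(y)-d_\nu| + |G(x)-d_\nu| = O(y^{-(\xi-3)}) + O(x^{-(\xi-3)})$, and since $\xi>3$ and $0<x\le y$ we have $y^{-(\xi-3)}\le x^{-(\xi-3)}$, so the bound is $O(x^{-(\xi-3)})$; the exponential case is identical using $e^{-\gd y}\le e^{-\gd x}$, and both keep the same $\gd$ (so a fortiori the "there exists $\gd$" form holds).

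For the converse --- the only step requiring care --- I would run a dyadic telescoping argument that upgrades control of increments over multiplicative windows of size $2$ into a Cauchy estimate at infinity. Fix $x_0$ beyond which the hypothesis holds, and let $x_0\le x\le x'$. With $m:=\lceil \log_2(x'/x)\rceil$, take the chain $u_0=x$, $u_j=2^jx$ for $1\le j\le m-1$, and $u_m=x'$; all consecutive ratios $u_{j+1}/u_j$ lie in $(1,2]$, so the hypothesis applied at each $u_j$ gives $|G(u_{j+1})-G(u_j)|\le C\,u_j^{-(\xi-3)}$ with $C$ uniform, whence
\[
|G(x')-G(x)| \;\le\; C\sum_{j=0}^{m-1}(2^jx)^{-(\xi-3)} \;\le\; C\,x^{-(\xi-3)}\sum_{j\ge0}2^{-j(\xi-3)} \;=\; C_\xi\,x^{-(\xi-3)},
\]
the series converging because $\xi>3$; crucially this bound is uniform in $x'\ge x$. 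Hence $\sup_{x'\ge x}|G(x')-G(x)|\to0$ as $x\to\infty$, so $G$ has a finite limit $d_\nu$ at $+\infty$, and letting $x'\to\infty$ in the display gives $|G(x)-d_\nu|\le C_\xi\,x^{-(\xi-3)}$, i.e.\ \eqref{eq:stationary_asymptotics}. The exponential case is the same computation: the telescoped sum becomes $\sum_{j\ge0}e^{-\gd 2^jx} = \sum_{j\ge0}(e^{-\gd x})^{2^j}$, which for $x$ large enough that $e^{-\gd x}\le 1/2$ is at most $e^{-\gd x}\sum_{j\ge0}(1/2)^{2^j-1}=O(e^{-\gd x})$, yielding $|G(x)-d_\nu|=O(e^{-\gd x})$ with the same $\gd$.

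All the content is in this converse, and the one genuine subtlety inside it is that one must use the uniformity of the error over the whole window $x\le y\le 2x$, not merely at $y=2x$: with $y=2x$ alone the telescoping only controls $G$ along a single geometric progression $\{2^nx_0\}$ and gives a limit a priori depending on $x_0$ (even $\log_2$-periodically), whereas the full window lets the dyadic chain terminate at an arbitrary $x'$ and hence produces a bona fide Cauchy criterion.
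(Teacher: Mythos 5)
Your proof is correct and follows essentially the same route as the paper: both directions are handled by writing $\nu((-\infty,x])-c_\nu x$ (your $G$) via a dyadic telescoping of window increments and summing the geometric series $\sum_j 2^{-j(\xi-3)}$ (resp.\ the exponential analogue). The only cosmetic difference is that the paper anchors the telescoping at a fixed point and identifies $d_\nu$ as an explicit convergent series, whereas you obtain it as a limit via a Cauchy criterion.
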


\begin{proof}
We prove the first statement. 
Writing $\nu((x, y])=\nu((-\infty, y])-\nu((-\infty, x])$, it is clear that \eqref{eq:stationary_asymptotics} implies \eqref{eq:stationary_asymptotics_equiv}. Conversely, assume that \eqref{eq:stationary_asymptotics_equiv} holds. For any $x\ge 1$, consider the (unique) integer $n$ such that $2^n < x \le 2^{n+1}$ and write
\begin{equation}
\begin{aligned}
\nu((-\infty, x])- c_\nu x = & \;  \nu\big((-\infty, 1]\big)-c_\nu\\
& + \sum_{i=0}^{n-1} 
\nu\big(\big(2^i, 2^{i+1}\big]\big)- c_\nu \big(2^{i+1}-2^i\big)\\
& +\nu\big(\big(2^n , x\big]\big)- c_\nu \big(x-2^n\big)\, ,
\end{aligned}
\end{equation}
and note that, due to \eqref{eq:stationary_asymptotics_equiv}, the series
$\sum_{i\ge 0} 
\left(\nu([2^i, 2^{i+1}])- c_\nu (2^{i+1}-2^i) \right)$ 
converges and its partial remainder 
$\sum_{i\ge n}
\left(\nu([2^i, 2^{i+1}])- c_\nu (2^{i+1}-2^i) \right)=O(2^{-n(\xi-3)})$. By \eqref{eq:stationary_asymptotics_equiv}, the term $\nu((2^n , x])- c_\nu (x-2^n)$ is also a $O(2^{-n(\xi-3)})$. Using finally that $O(2^{-n(\xi-3)})=O(x^{-(\xi-3)})$,
we get that \eqref{eq:stationary_asymptotics} holds with $d_\nu:= \nu((-\infty, 1])-c_\nu + \sum_{i=0}^{\infty} 
\nu((2^i, 2^{i+1}])- c_\nu (2^{i+1}-2^i) $.

The proof is similar in the case of error term $O\big(e^{-\delta x}\big)$.
\end{proof}

\bigskip

Our focus will now be proving \eqref{eq:stationary_asymptotics_equiv}. There will be four preliminary steps:
 \medskip
 \begin{enumerate}[leftmargin=0.7 cm]
 \item 
 By elaborating on the well-known representation of  the invariant measure of a recurrent MC (see for example \cite[Sec.~6 of Ch.~3]{cf:MC}), we show that $\nu((x,y])$ can be represented  by using the {{\sl excursion occupation times}} $V_{a}(b, (x, y])$ of $Y$, see the  equation \eqref{pi-V} below. In  Lemma \ref{l:Theta}, we  give   an expression of $V_{a}(b, (x, y])$ in terms of the strictly descending epochs of the random walk $S$ and a crucial  function $F$, that  links $V_{a}(b, (x, y])$ and the $Y$ process observed at the \emph{ladder epochs} of $S$ (see \eqref{eq:ladder-times}).
 \item By considering the weak ascending ladder epochs of $S$, we rewrite $F$ in terms of a new process $(J_k(\theta))_{k\ge 0}$, defined in \eqref{eq:defJ},  which is now an increasing Markov chain.
 \item
  We determine the asymptotic behavior of  $F$, see Proposition \ref{P:F}: this is the main technical estimate in the proof.
 \item We perform some technical estimates that are central in controlling the error 
  arising when replacing $F$ by its asymptotic behavior into
   the representation formula for the cumulative function of $\nu$.
 \end{enumerate} 
 
 \medskip
 
 We recall that we work under the assumptions of \Cref{thm:stationary_asymptotics}, that is
 the criticality condition \eqref{eq:critical}, {\bf (H-1)} of \Cref{th:main}, 
   {\bf (T-1)} or  {\bf (T-1$^\prime$)}, and  {\bf (T-2)} explicitly stated in \Cref{thm:stationary_asymptotics}.

 \medskip
 
 \subsection{Step 1: representation of the invariant measure} 
  First,  we observe that $Y$ can be explicitly solved. By induction we see that, when $Y_0=\theta$,    for  $n=0, 1,2,\ldots$
  \begin{equation}
	Y_n\,=\,  \log \left( e^{\theta+ S_n}+ \sum_{i=0}^{n-1} e^{S_n-S_i }\right) =  \theta+ S_n +  \log \left( 1+ e^{-\theta} \sum_{i=0}^{n-1} e^{-S_i }\right) \, ,
	\label{eq:Yn_direct}
\end{equation}
where as before $S_n = \sum_{i=1}^n \mathtt{z}_i$ and we adopt  the convention $\sum_\emptyset:=0$.

Since $\nu$ is nonzero, let us fix an arbitrary $a >0$ such that  $\nu((-\infty, a]) >0$
and denote by $\tau_Y(a):=\inf\{n\ge 1: Y_n \in (-\infty, a]\}$ the first return time to $(-\infty,  a]$ of $Y$. 
By applying \cite[Theorem~3.6.5]{cf:MC} to the recurrent Markov chain  $Y$,  and by applying the Markov property at time 1,
we have  that for any Borel set $I\subset (a, \infty)$,
 \begin{align}
    \nu(I) & = \int_{ (-\infty,  a]} \e_{v} \left[\sum_{n=0}^{\tau_Y(a)-1} \ind_{\{Y_n \in I\}}\right] \nu(dv) \nonumber \\ 
& = \int_{(-\infty, a]} \e_{v}\big[ V_{a}(Y_1, I) \ind_{\{Y_1> a\}} \big]\nu(\dd v)  , 
	\label{pi-V}
 \end{align}
\noindent where, for $b>a$ and any Borel set $I\subset (a, \infty]$, we set
\begin{equation}  
	V_{a}(b, I)\,:=\, \bbE_b\left[\sum_{n=0}^{ \tau_Y(a)-1}\ind_{  \left\{ Y_n \in I\right\}}\right]\, \,=\,  \sum_{n=0}^\infty \p_b\Big( \tau_Y(a)> n, Y_n \in I\Big).
\label{def-V}
\end{equation}
 Later on, we will take $I$ of the form $(x, y]$, with $a<x\le y \le 2x$, but let us remain general for now.
 
 \medskip 
 
We are now going to give a representation of $V_a(b, I)$ using the fluctuation theory for the one-dimensional random walk $S$. Consider the strictly descending ladder times $\varrho_0:=0$ and, for $j\ge 1$
\begin{equation}
\label{eq:ladder-times}
\varrho_j\,:=\,\min\left\{n\ge \varrho_{j-1}: S_n < S_{\varrho_{j-1}}\right\}\,.
\end{equation}  
Furthermore, consider the process $\Theta=(\Theta_j)_{j\ge 0}$ defined by $\Theta_j:= Y_{\varrho_j}$. Let
\begin{equation}   
	\eta_j\,:=\, \sum_{n=\varrho_{j-1} }^{\varrho_j-1} e^{  S_{\varrho_{j}}-S_n }
	 \qquad  \text{ and }  \qquad 
	\Delta_j\,=\,  S_{\varrho_{j-1}}-S_{\varrho_j} >0\,. 
	\label{def-eta-Delta}
\end{equation}
 By the strong Markov property of $S$,  $((\eta_k, \Delta_k))_{k\ge1}$ is an IID sequence adapted to the filtration $({\mathcal F}_{\varrho_j})_{j\ge 0}$, where ${\mathcal F}_n:= \sigma(\{S_i, i\le n\})$ for $n\ge 0$. Using \eqref{eq:Yn_direct}, we have that  
 \begin{equation}
  \Theta_{j+1}= \log\Big(   e^{- \Delta_{j+1}}  e^{\Theta_j}  +  \eta_{j+1}  \Big) , \label{Thetaj+1}  \end{equation}
 which, in view of the fact that $(\eta_j, \Delta_j)_{j\ge 1}$ is an IID sequence, implies that $(\Theta_j)_{j\ge 0}$ is a Markov chain. For completeness, let us mention that, if $\Theta_0= Y_0=b$, then $\Theta_j$ is given by:
 \begin{equation} 
 \label{eq:defTheta}
 \Theta_j= Y_{\varrho_j}
 =
 \log \left(\, e^{b+S_{\varrho_j}} + \sum_{k=1}^j e^{S_{\varrho_j}- S_{\varrho_k}} \eta_k\right)\, .
 \end{equation}

\medskip
  
\begin{lemma}  \label{l:Theta} 
 For  $b>a$, for any Borel set $I\subset (a, \infty)$, 
   \begin{equation}  V_{a}(b, I)= \e_b \left[  \sum_{j=0}^{\tau_\Theta(a)-1} F(\Theta_j, I) \right]= \sum_{j=0}^\infty \e_b\left[\ind_{\{\tau_{\Theta}(a)>j\}} F(\Theta_j , I)\right],  \label{V(x)}\end{equation}
  where $\tau_\Theta(a)$ is the first return time to $(-\infty, a]$ of $\Theta$: 
  \begin{equation}   
	  \label{eq:def-tau_Theta}
	  \tau_\Theta(a) :=\inf\{j\ge 1: \Theta_j \le a\}, 
  \end{equation}
  and $F(\theta, I)$ is defined, for any $\theta\in\R$ and any Borel set $I\subset \R$, by:
  \begin{equation}
  F(\theta, I):=  \e_\theta \left[ \sum_{n=0}^{\varrho_1-1} \ind_{\{ Y_n\in I\}}\right]= \sum_{n=0}^\infty \p_\theta(\rho_1>n, Y_n\in I)  .  \label{def-F} 
  \end{equation}
 \end{lemma}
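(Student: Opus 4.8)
The plan is to decompose the occupation sum defining $V_a(b,I)$ along the strictly descending ladder blocks $[\varrho_j,\varrho_{j+1})$ of the walk $S$, and to identify the contribution of each block with $F(\Theta_j,I)$ by means of the strong Markov property of $Y$. Everything in sight is a sum of nonnegative quantities, so Tonelli's theorem lets us freely interchange $\e_b$ and $\sum$ and all identities may be read in $[0,\infty]$; the finiteness needed later follows once $I$ is taken of the form $(x,y]$.

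The crucial preliminary step is a monotonicity observation: on each ladder block $\{\varrho_j\le n<\varrho_{j+1}\}$ one has $Y_n\ge \Theta_j$. Indeed, applying \eqref{eq:Yn_direct} to the chain restarted at time $\varrho_j$ from $Y_{\varrho_j}=\Theta_j$, with the shifted walk $\tilde S_m:=S_{\varrho_j+m}-S_{\varrho_j}$, gives
\[
Y_{\varrho_j+m}\,=\,\Theta_j+\tilde S_m+\log\Big(1+e^{-\Theta_j}\sum_{i=0}^{m-1}e^{-\tilde S_i}\Big)\,\ge\,\Theta_j+\tilde S_m\,,
\]
and by the very definition of $\varrho_{j+1}$ we have $\tilde S_m\ge 0$ for every $m$ with $\varrho_j+m<\varrho_{j+1}$. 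Consequently, if $n<\varrho_{\tau_\Theta(a)}$ then $n$ lies in some block $j$ with $j<\tau_\Theta(a)$, hence $\Theta_j>a$ and $Y_n\ge\Theta_j>a$, while $Y_{\varrho_{\tau_\Theta(a)}}=\Theta_{\tau_\Theta(a)}\le a$; since also $Y_0=b>a$, this yields the identity $\tau_Y(a)=\varrho_{\tau_\Theta(a)}$.

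With this in hand I would split the range of summation in \eqref{def-V} and write
\[
V_{a}(b,I)\,=\,\e_b\Big[\sum_{n=0}^{\varrho_{\tau_\Theta(a)}-1}\ind_{\{Y_n\in I\}}\Big]\,=\,\sum_{j=0}^\infty\e_b\Big[\ind_{\{\tau_\Theta(a)>j\}}\sum_{n=\varrho_j}^{\varrho_{j+1}-1}\ind_{\{Y_n\in I\}}\Big]\,.
\]
Now $\varrho_j$ is a stopping time for the natural filtration of $S$, which is contained in that of $Y$ (the increments $\logZ_n=Y_n-h(Y_{n-1})$ being recoverable from $(Y_i)_{i\le n}$), and $\ind_{\{\tau_\Theta(a)>j\}}$ is $\mathcal F_{\varrho_j}$-measurable. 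Applying the strong Markov property of $Y$ at $\varrho_j$, and using that $\varrho_{j+1}-\varrho_j$ is the first strictly descending ladder time of the post-$\varrho_j$ walk, the inner block sum has $\mathcal F_{\varrho_j}$-conditional expectation $\e_\theta\big[\sum_{n=0}^{\varrho_1-1}\ind_{\{Y_n\in I\}}\big]\big|_{\theta=\Theta_j}=F(\Theta_j,I)$. Taking expectations term by term gives $\sum_j\e_b[\ind_{\{\tau_\Theta(a)>j\}}F(\Theta_j,I)]$, i.e.\ \eqref{V(x)}; the two displayed forms of $F$ in \eqref{def-F} and the two forms of \eqref{V(x)} are just the $\e$/$\sum$ interchange.

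The only genuine obstacle I anticipate is the blockwise lower bound $Y_n\ge\Theta_j$ together with the bookkeeping around the two nested stopping rules — $\varrho_j$ for $S$ and $\tau_\Theta(a)$ for $\Theta$ — that turns it into $\tau_Y(a)=\varrho_{\tau_\Theta(a)}$; once that is in place, the remainder is a routine use of the strong Markov property and Tonelli. One should also check, as a minor point, that $a$ may be chosen so that $Y$ is recurrent to $(-\infty,a]$, so that $\tau_Y(a)<\infty$ a.s., which is where Proposition~\ref{th:Y} is invoked.
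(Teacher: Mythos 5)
Your proposal is correct and follows essentially the same route as the paper: the blockwise lower bound $Y_n\ge\Theta_j$ on $[\varrho_j,\varrho_{j+1})$ yielding $\tau_Y(a)=\varrho_{\tau_\Theta(a)}$, the decomposition of the occupation sum over ladder blocks, and the strong Markov property at $\varrho_j$ (with $\{\tau_\Theta(a)>j\}\in\cF_{\varrho_j}$) identifying each block's conditional expectation with $F(\Theta_j,I)$. The only difference is cosmetic: you derive the monotonicity from the explicit formula \eqref{eq:Yn_direct} for the restarted chain and spell out the Tonelli interchange, which the paper leaves implicit.
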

 \medskip


 \medskip
  \begin{proof}
 We use the definition of $Y$ to see that $Y_n \ge Y_{\varrho_j} + S_n-S_{\rho_j} \ge  Y_{\varrho_j} =\Theta_j$  for $\varrho_j \le n < \varrho_{j+1}$. This means that in fact  $\tau_Y(a)$ must be equal to $\varrho_{\tau_\Theta(a)}$, with $\tau_\Theta(a)$ defined in \eqref{eq:def-tau_Theta}. Hence
\begin{equation}
	\begin{split}
		V_{a, b}(I)&=     \e_b \left[ \sum_{j=0}^{\tau_\Theta(a)-1} \sum_{n=\varrho_{j}}^{\varrho_{j+1}-1} \ind_{\{Y_n \in I\}}\right]
 \\
 &=  \sum_{j=0}^\infty \e_b \left[ \ind_{\{\tau_\Theta(a)>j \}} \sum_{n=\varrho_{j}}^{\varrho_{j+1}-1} \ind_{\{Y_n \in I\}}\right].
	\end{split}
\end{equation}
   Observe that the event $\{\tau_\Theta(a)> j\}$ is measurable with respect to ${\mathcal F}_{\varrho_{j}}$,
and that, due to the Markov property of $Y$, the conditional expectation of $ \sum_{n=\varrho_{j}}^{\varrho_{j+1}-1} \ind_{\{Y_n \in I\}}$ with respect to ${\mathcal F}_{\varrho_{j}}$,
 is equal to $F(Y_{\varrho_{j}}, I)$. We obtain  \eqref{V(x)} and  the proof of Lemma \ref{l:Theta} is complete.
 \end{proof}
  
\subsection{Step 2: representation of the function $F$ in terms of an auxiliary process}
Recall \eqref{def-F} for the definition of $F(\theta, I)$. We are going to use again the fluctuation theory for the random walk $S$ to give a representation of $F$ in terms of an \emph{increasing} process. Let us introduce some notations.

  Let  $H_k:= S_{\alpha_k}$ and $\alpha_k, k\ge0,$ be the weak ascending ladder heights and epochs of $S$: $\alpha_0:=0$ and for any $k\ge 1$ 
  \begin{equation} \label{eq:ladder-alpha}
  \alpha_k:= \inf\left\{n>\alpha_{k-1}:\, S_n \ge S_{\alpha_{k-1}}\right\}\,.
  \end{equation} 

Let us also introduce for $ k \ge 1$:
\begin{equation}
 \widetilde{\Delta}_k:= H_{k}-H_{k-1} \ge 0 \,, \qquad \widetilde\eta_k  \,:=\, \sum_{n= \alpha_{k-1}+1}^{\alpha_k} e^{S_n-H_k}\, .\label{eq:defetatilde}
 \end{equation}
By the Markov property of $S$, the sequence $((\widetilde{\Delta}_k, \widetilde\eta_k))_{k\ge 1}$ is IID. Let us define a new Markov chain $(J_k(\theta))_k$, adapted to the filtration $(\mathcal{F}_{\alpha_k})_{k\ge 0}$,   by $J_0(\theta):=\theta$, and for all $k=0, 1, 2, \dots$
\begin{equation}
J_{k+1}(\theta):=J_k(\theta) + \widetilde{\Delta}_{k+1} + \log(1 + \widetilde\eta_{k+1} e^{-J_k(\theta)} ).
\end{equation}
Note that $J_k(\theta)$ is increasing in $k$. Explicitly, $J_k(\theta)$ is given by:
\begin{equation}
J_k(\theta) \, =\, \log \Big(e^{ \theta+  H_k} + \sum_{j=1}^k \widetilde\eta_j  \, e^{H_k-   H_{j-1}}\Big) =   \theta+  H_k+ \log \Big(1 + e^{-\theta} \sum_{j=1}^k \widetilde\eta_j  \, e^{-   H_{j-1}}\Big). \label{eq:defJ}
\end{equation}

\begin{lemma}\label{l:rewriteF} 
For any $\theta\in\R$, any Borel set $ I\subset \R$, we have \begin{equation} 
   F(\theta,I) =  \sum_{k=0}^\infty \p(J_k(\theta) \in I)\, .
  \label{eq:rewriteF}
  \end{equation}
\end{lemma}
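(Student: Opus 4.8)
The plan is to establish \eqref{eq:rewriteF} by a two–fold time–reversal (duality) argument for the random walk $S$. By the definition \eqref{def-F},
\[
F(\theta,I)=\sum_{n\ge 0}\p_\theta\big(\varrho_1>n,\ Y_n\in I\big),
\]
and on the event $\{\varrho_1>n\}$ one has $S_0,\dots,S_n\ge 0$, while \eqref{eq:Yn_direct} expresses $Y_n=\log\big(e^{\theta+S_n}+\sum_{i=0}^{n-1}e^{S_n-S_i}\big)$ as a measurable functional of $(S_0,\dots,S_n)$.

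\emph{Reversal at a fixed horizon.} Since $(\logZ_1,\dots,\logZ_n)$ is exchangeable, $(S_0,S_1,\dots,S_n)\law (0,\,S_n-S_{n-1},\,\dots,\,S_n-S_0)$. Evaluating the constraint $\{S_1\ge0,\dots,S_n\ge0\}$ and the functional $Y_n$ on the reversed path: the constraint becomes $\{S_n=\max_{0\le m\le n}S_m\}$, and $Y_n$ becomes $\log\big(e^{\theta+S_n}+\sum_{\ell=1}^{n}e^{S_\ell}\big)$ (because $S_n-S_{n-i}$ is relabelled into $S_i$). Hence the $n$-th summand equals $\p\big(S_n=\max_{0\le m\le n}S_m,\ \log(e^{\theta+S_n}+\sum_{\ell=1}^{n}e^{S_\ell})\in I\big)$. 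The set of $n$ with $S_n=\max_{0\le m\le n}S_m$ is precisely the set of weak ascending ladder epochs $\{\alpha_0<\alpha_1<\cdots\}$ of $S$ — here one uses that $S$ is nondegenerate and centered, so $\limsup_n S_n=+\infty$ a.s.\ and each $\alpha_k<\infty$ a.s. — so, writing $H_k=S_{\alpha_k}$ and summing over the ladder indices,
\[
F(\theta,I)=\sum_{k\ge0}\p(L_k\in I),\qquad L_k:=\log\Big(e^{\theta+H_k}+\sum_{\ell=1}^{\alpha_k}e^{S_\ell}\Big).
\]

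\emph{Reversal of the ladder excursions.} It remains to prove that $L_k$ and $J_k(\theta)$ have the same law for each $k$. Splitting $\sum_{\ell=1}^{\alpha_k}e^{S_\ell}$ over the blocks $(\alpha_{j-1},\alpha_j]$ and using $\sum_{\ell=\alpha_{j-1}+1}^{\alpha_j}e^{S_\ell}=e^{H_j}\widetilde\eta_j$ (immediate from \eqref{eq:defetatilde}), one gets $e^{L_k}=e^{\theta+H_k}+\sum_{j=1}^{k}e^{H_j}\widetilde\eta_j$, a function of $\big((\widetilde\Delta_j,\widetilde\eta_j)\big)_{j=1}^{k}$ only (recall $H_j=\widetilde\Delta_1+\cdots+\widetilde\Delta_j$); on the other hand \eqref{eq:defJ} gives $e^{J_k(\theta)}=e^{\theta+H_k}+\sum_{j=1}^{k}\widetilde\eta_j\,e^{H_k-H_{j-1}}$. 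Now replace the i.i.d.\ finite sequence $\big((\widetilde\Delta_j,\widetilde\eta_j)\big)_{j=1}^{k}$ by its reversal $\big((\widetilde\Delta_{k+1-j},\widetilde\eta_{k+1-j})\big)_{j=1}^{k}$: the $j$-th partial sum $\widetilde\Delta_1+\cdots+\widetilde\Delta_{j-1}$ becomes $H_k-H_{k-j+1}$, so the $j$-th summand $\widetilde\eta_j e^{H_k-H_{j-1}}$ of the $J_k$-formula becomes $\widetilde\eta_{k+1-j}e^{H_{k+1-j}}$, and after re-indexing the sum is exactly the one appearing in $e^{L_k}$. Since reversing the first $k$ terms of an i.i.d.\ sequence preserves its law, $L_k\law J_k(\theta)$, and summing over $k$ yields \eqref{eq:rewriteF}.

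The only delicate points are the bookkeeping of the two reversals — checking that the constraint $\{S_1\ge0,\dots,S_n\ge0\}$ and the functional $Y_n$ transform as stated, that the times of a weak running maximum are exactly the $\alpha_k$, and that the elementary algebra identifying the two expressions for $J_k$ after the excursion reversal is correct; all interchanges of sums and expectations are justified by nonnegativity.
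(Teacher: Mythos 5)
Your proof is correct and follows essentially the same route as the paper's: a time-reversal (duality) at fixed horizon to turn the constraint $\{\varrho_1>n\}$ into ``$S_n$ is a weak running maximum'', identification of those times with the weak ascending ladder epochs, and then a distributional identification of the resulting functional with $J_k(\theta)$ using the IID structure of the blocks $(\widetilde\Delta_j,\widetilde\eta_j)$. The only difference is cosmetic: you spell out the second step as an explicit reversal of the finite IID block sequence, where the paper simply asserts the equality in law of $(H_k,\sum_{\ell=1}^k\widetilde\eta_\ell e^{H_\ell-H_k})$ and $(H_k,\sum_{j=1}^k\widetilde\eta_j e^{-H_{j-1}})$.
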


\medskip

  \begin{proof}  The proof is based on the duality lemma for the random walk $S$. By the definitions of $ F(\theta, I)$ and $\varrho_1$, we have   
  \begin{equation*}
	  \begin{split}
		  F(\theta, I)&= \sum_{n=0}^\infty  \p \Bigg(S_1\ge 0, ..., S_n \ge 0,  \log \Big(e^{\theta+S_n} +  \sum_{j=0}^{n-1} e^{ S_n- S_j}\Big) \in I\Bigg)
 \\
 &= \sum_{n=0}^\infty  \p \Bigg(S_1\ge 0, ..., S_n \ge 0,  \log \Big(e^{\theta+S_n} +  \sum_{j=1}^{n} e^{ S_n- S_{n-j}}\Big) \in I\Bigg).
	  \end{split}
  \end{equation*}
  
\noindent Since $(S_n-S_{n-j})_{0\le j\le n}$ has the same distribution as $(S_j)_{0\le j\le n}$, we obtain that 
   \begin{equation*}
	  \begin{split}
		  F(\theta, I)  &=\sum_{n=0}^\infty  \p \Bigg( S_n\ge S_{n-1}, ..., S_n \ge S_1, S_n\ge 0,  \,   \log \Big(e^{\theta+  S_n} +  \sum_{j=1}^n e^{S_j}\Big) \in I\Bigg)\\
 &= \sum_{k=0}^\infty \p\Bigg(\log \Big(e^{\theta+  H_k} +  \sum_{j=1}^{\alpha_k} e^{S_j}\Big) \in I\Bigg)\\
  &= \sum_{k=0}^\infty \p\Bigg(\theta+  H_k + \log\Big(1 + e^{-\theta} \sum_{j=1}^{\alpha_k} e^{S_j-H_k}\Big) \in I\Bigg).
	  \end{split}
  \end{equation*}

\noindent Note that \begin{equation}\sum_{j=1}^{\alpha_k} e^{  S_j-  H_k  }= \sum_{\ell=1}^k \sum_{j=\alpha_{\ell-1}+1}^{\alpha_\ell} e^{  S_j-  H_\ell  } e^{ H_\ell -H_k }=   \sum_{\ell=1}^k \widetilde\eta_\ell \, e^{ H_\ell-H_k }, \end{equation}

\noindent by definition of $\widetilde \eta$ in \eqref{eq:defetatilde}. The strong Markov property of $S$ yields that $(\widetilde\eta_\ell, H_\ell-H_{\ell-1})_{\ell\ge1}$ are IID. This yields that $(H_k, \sum_{\ell=1}^k \widetilde\eta_\ell \, e^{ H_\ell-H_k })$ is distributed as $(H_k, \sum_{j=1}^k \widetilde\eta_j  \, e^{-   H_{j-1}})$ and therefore that $\theta+  H_k + \log(1 + e^{-\theta} \sum_{j=1}^{\alpha_k} e^{S_j-H_k}) $ is distributed as $J_k(\theta)$. This concludes the proof.
 \end{proof}

\subsection{Step 3: the asymptotic behavior of the function $F$} 
 
We now take $I=(x, y]$, with $0\le x \le y \le 2x$. We are going to determine the asymptotic behavior of $F(\theta, (x, y])$ as $x\to\infty$ by using the fact that $J_k(\theta)$ behaves asymptotically like a random walk with increments having the law of $H_1$, which is a nonnegative random variable. 
The renewal function of $H$ is defined as  
\begin{equation} 
R(x):=  \sum_{k=0}^\infty \p\Big( H_k \le x\Big) , \qquad x\ge 0, \label{def-R}
\end{equation} 
and $R(x):=0$ for $x<0$.

\medskip
The following Lemma, due to Rogozin \cite{cf:Rogozin} and Stone \cite{cf:Stone}, plays a crucial role in the study of $F$. 
\medskip

\begin{lemma}[\cite{cf:Rogozin}, formula (28), and  \cite{cf:Stone}]
	\label{lem:Rogozin}
	Assume that
	
\noindent{\bf (R-1):} there exists  $n_0\ge1$ such that the law of $H_{n_0}$ has an absolutely continuous part: this property is normally referred to as ``the law of $H_1$ is spread out";
 
\noindent and either

\noindent{\bf (R-2):} there exists  $\kappa\ge 2$ such that $\e[H_1^\kappa] < \infty$;
 
\noindent or  

\noindent{\bf (R-2$^\prime$):} there exists  $c>0$ such that $\e[e^{c H_1}] < \infty$.

Then for any $x>0$,  \begin{equation}\label{Rphi} \left|R(x) -  \left(c_R\,  x  + c'_R\right)\right|\, \le \, \varphi(x):=  
\begin{cases}
C_\varphi \, (1+x)^{-(\kappa-2)}, \qquad &\mbox{under {\bf (R-2)}} , \\
C_\varphi\, e^{-C'_\varphi \,x}, \qquad &\mbox{under {\bf (R-2$^\prime$)}} , 
\end{cases}   
 \end{equation}
 
 \noindent where   $C_\varphi, C'_\varphi$ are two (unimportant) positive constants and
\begin{equation}\label{eq:cRc'R}
c_R\,:=\,\frac1{ \e(H_1)}, \qquad c'_R\,:=\, \frac{\e\left[H_1^2\right]}{2\e\left[H_1\right]^2}.\end{equation}
    \end{lemma}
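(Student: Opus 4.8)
The statement to prove is Lemma~\ref{lem:Rogozin}, the Rogozin--Stone renewal theorem with rate, so the ``proof'' in this paper is really a matter of citing the literature and adapting it to the precise form stated. Here is how I would organize it.

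\medskip

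\textbf{Overall strategy.} The plan is to reduce the claim to the classical sharp renewal theorem for spread-out distributions on $[0,\infty)$ with a controlled rate of convergence, and then identify the constants. Write $R = \sum_{k\ge 0}\mu^{*k}$ where $\mu$ is the law of $H_1$, a nondegenerate distribution on $[0,\infty)$ with finite mean $\e[H_1]\in(0,\infty)$ (finiteness of the mean is guaranteed under either {\bf (R-2)} or {\bf (R-2$^\prime$)}, and nondegeneracy follows since $S$ is a nontrivial centered walk so its ascending ladder height is not a.s.\ $0$). The renewal measure $U(\dd x)$ has $R(x)=U([0,x])$. The key object is the defect $R(x) - (c_R x + c_R')$ where $c_R = 1/\e[H_1]$ and $c_R' = \e[H_1^2]/(2\e[H_1]^2)$; the latter is exactly the constant that appears in Blackwell/key renewal theory as the limit of $R(x)-x/\e[H_1]$ (it equals $\tfrac12(1+\sigma_\mu^2/\e[H_1]^2)$, matching \eqref{eq:cRc'R}), finite precisely because $\e[H_1^2]<\infty$, which is implied by {\bf (R-2)} with $\kappa\ge 2$ and a fortiori by {\bf (R-2$^\prime$)}.

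\medskip

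\textbf{Step 1: the spread-out reduction.} Under {\bf (R-1)} the law of $H_{n_0}$ has a nonzero absolutely continuous component, so $\mu$ is spread out; then for large $k$ the $k$-fold convolution $\mu^{*k}$ has a density, the renewal measure $U$ restricted away from a neighborhood of $0$ is absolutely continuous with a bounded density, and Stone's decomposition applies: $U = U_1 + U_2$ with $U_1$ a finite measure and $U_2$ having a density $u_2$ that converges to $1/\e[H_1]$ at a rate governed by the tail of $\mu$. This is precisely the content of Stone~\cite{cf:Stone}; Rogozin~\cite{cf:Rogozin} gives the explicit rate in formula~(28). Integrating the density statement yields $R(x) = c_R x + c_R' + o(1)$, and the rate upgrades the $o(1)$ to the quantitative $\varphi(x)$.

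\medskip

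\textbf{Step 2: transferring the tail condition from $\mu$ to $R$.} The polynomial case {\bf (R-2)}: the standard result (e.g.\ via the Stone decomposition plus estimates on $\overline{\mu}(x)=\mu((x,\infty))$, or directly Rogozin's formula~(28)) gives that if $\e[H_1^\kappa]<\infty$ for $\kappa\ge 2$ then $|u_2(x) - c_R| = O(x^{-(\kappa-1)})$ as a density statement, hence upon integration from $x$ to $\infty$ of the defect derivative, $|R(x) - (c_R x + c_R')| = O(x^{-(\kappa-2)})$; adding the factor $(1+x)$ in the bound and choosing $C_\varphi$ large handles small $x$ where $R$ is bounded. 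The exponential case {\bf (R-2$^\prime$)}: when $\e[e^{cH_1}]<\infty$ one gets exponential decay of $\overline\mu$, the analytic Fourier/Laplace transform argument (the renewal equation $\widehat U(s) = 1/(1-\widehat\mu(s))$ has its pole at $s=0$ simple with the next singularity at distance $\ge$ some $c'>0$) yields $|R(x)-(c_R x + c_R')| \le C_\varphi e^{-C_\varphi' x}$; again absorb small $x$. In both subcases one records $c_R, c_R'$ as in \eqref{eq:cRc'R}, computing $c_R'$ from the second-order Taylor expansion of $\widehat\mu$ at $0$ or, equivalently, from the known formula for $\lim_{x\to\infty}(R(x)-x/\e[H_1])$.

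\medskip

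\textbf{Main obstacle and remarks.} The genuinely hard analytic input — the sharp renewal theorem with explicit polynomial/exponential rate for spread-out laws — is not reproved here but quoted from Rogozin~\cite{cf:Rogozin} and Stone~\cite{cf:Stone}; the only real work is (i) checking the hypotheses of those theorems are met in our setting (spread-outness of the ascending ladder height from {\bf (R-1)}; the relevant moment from {\bf (R-2)}/{\bf (R-2$^\prime$)}; nondegeneracy and positivity of $\e[H_1]$), and (ii) matching our normalization $R(x)=\sum_{k\ge0}\p(H_k\le x)$ (which includes the $k=0$ term, i.e.\ a unit mass at $0$, so $R(0)=1$ rather than $0$) with the normalization in those references, and confirming that the additive constant is exactly $c_R'=\e[H_1^2]/(2\e[H_1]^2)$. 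Since these are standard bookkeeping points, the proof is essentially a one-paragraph invocation of the cited results with the identification of constants; I would simply state that Lemma~\ref{lem:Rogozin} follows from \cite[formula (28)]{cf:Rogozin} and \cite{cf:Stone} once one checks, as above, that {\bf (R-1)} gives spread-outness and {\bf (R-2)}/{\bf (R-2$^\prime$)} gives the requisite control on the tail of $H_1$.
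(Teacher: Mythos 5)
Your proposal is correct and follows essentially the same route as the paper: the lemma is treated there as a quotation of Rogozin's formula (28) and Stone's theorem, with the accompanying remark checking that {\bf (R-1)} is Rogozin's spread-out condition, identifying $c_R$ and $c_R'$ (including the implicit constant in Stone), and extracting the polynomial rate from Rogozin's expansion via the bound $\e[(H_1-x)^2\ind_{\{H_1\ge x\}}]\le \e[H_1^\kappa]x^{2-\kappa}$. Your variant of that last step (Stone's decomposition plus integrating the density estimate, and a transform argument in the exponential case) is just a slightly different way of reading the same cited results, so there is nothing to add.
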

    
    \medskip
    
\begin{rem}\label{rem:rogozin}

  (i)  By the main theorem in Doney \cite{cf:Doney80}, 
  
  $\bullet$    {\bf (R-2)}  is satisfied if $\e[(\logZ^+)^{\kappa+1}]<\infty$, hence it is satisfied under hypothesis {\bf (T-1)} of \cref{thm:stationary_asymptotics}   with $\kappa=\xi-1\ge 2$;
  
  $\bullet$    {\bf (R-2$^\prime$)}  is satisfied under {\bf (T-1$^\prime$)}.

  (ii) The case  {\bf (R-2$^\prime$)} of Lemma \ref{lem:Rogozin} is stated in Stone \cite{cf:Stone} with an implicit constant that must be equal to  $c'_R$. See also \cite[Sec.~VII.2 and Ex.~VII.2.2]{cf:Asm}.
  
  The case   {\bf (R-2)}  is also obtained in Carlsson \cite[Corollary 2 and Remark 2]{cf:Carlsson} under the slightly weaker assumption that $H_1$ is strongly nonlattice, instead of {\bf (R-1)}. Here, we follow the formalism of Rogozin \cite{cf:Rogozin}, which states that under  {\bf (R-2)}, \begin{equation} 
  R(x)\stackrel{x\to \infty}= c_R\, x  + c'_R - \frac1{2(\e(H_1))^2} \e \left[\left(H_1-x\right)^2\ind_{\{H_1\ge x\}}\right]+ o\big(x^{-\kappa+1}\big).  \label{R(x)}\end{equation}
 Notice that the condition {\bf (R-1)} is equivalent to  \cite[Condition (6)]{cf:Rogozin}, by  Remark 4 there. Using {\bf (R-2)} we see that $\e[H_1^2 \ind_{\{H_1\ge x\}}] \le \e(H_1^\kappa) x^{2-\kappa}$, hence there exists $C_R>0$ such that 
 \begin{equation} \left|R(x) -  \left(c_R\,  x  + c'_R\right)\right|\, \le \, C_R  x^{2-\kappa}\ \text{ and } \ R(x)\, \le \, C_R\, x \qquad \text{ for } x\ge 1. 
 \label{R(x)2}\end{equation}
 Of course \eqref{R(x)2} is also valid in the case  {\bf (R-2$^\prime$)}, and $C_R$ is just a constant involved in  upper bounds on error terms, while $c_R$ and $c'_R$ are the constants in \eqref{eq:cRc'R}.
  \end{rem}

 The following Proposition gives the key estimate  in the proof of Theorem \ref{thm:stationary_asymptotics}.     Recall \eqref{def-F} for the definition of $F$. 
      
      \medskip

  \begin{proposition}\label{P:F}  Assume {\bf (R-1)} and {\bf (R-2)} with $\kappa\ge 2$. Then, for $x\to \infty$ and uniformly in $0\le \theta\le x/2$ and $x\le y \le 2x$,  we have 
 \begin{equation}  \label{F-asymp}
 F(\theta, (x, y]) \,=\,   c_R\, (y-x) + O\big(  x^{-(\kappa-2)}\big)\, .
\end{equation}
Assuming  {\bf (R-2$^\prime$)} instead of {\bf (R-2)}, there exists a positive constant $\delta$ such that \eqref{F-asymp} holds with error term replaced by $O\left(     e^{-\delta \, x  }\right)$.
     \end{proposition}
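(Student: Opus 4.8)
The starting point is the representation $F(\theta, (x,y]) = \sum_{k=0}^\infty \p(J_k(\theta) \in (x,y])$ from Lemma~\ref{l:rewriteF}. The plan is to compare this sum with the renewal sum $\sum_{k=0}^\infty \p(\theta + H_k \in (x,y]) = R(y-\theta) - R(x-\theta)$, which by Lemma~\ref{lem:Rogozin} equals $c_R(y-x) + O((1+(x-\theta))^{-(\kappa-2)})$; since $\theta \le x/2$ the error is $O(x^{-(\kappa-2)})$, which is exactly the target. So the whole problem reduces to controlling the difference between $J_k(\theta)$ and the pure random walk $\theta + H_k$. From the explicit formula \eqref{eq:defJ} we have the deterministic sandwich
\begin{equation}
\theta + H_k \;\le\; J_k(\theta) \;=\; \theta + H_k + \log\Big(1 + e^{-\theta}\sum_{j=1}^k \widetilde\eta_j e^{-H_{j-1}}\Big) \;\le\; \theta + H_k + W,
\end{equation}
where $W := \log(1 + e^{-\theta}\Sigma_\infty)$ and $\Sigma_\infty := \sum_{j\ge 1} \widetilde\eta_j e^{-H_{j-1}}$. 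The first thing I would do is verify that $\Sigma_\infty < \infty$ a.s.\ and in fact has moments or at least exponential tails of its own: each $\widetilde\eta_j$ is an IID copy of $\widetilde\eta_1 = \sum_{n=1}^{\alpha_1} e^{S_n - H_1} \le \alpha_1$, and the factors $e^{-H_{j-1}}$ decay geometrically in expectation because $H$ is a nondegenerate nonnegative random walk (so $\e[e^{-H_1}] =: \rho < 1$). A union/Borel--Cantelli or direct first-moment bound gives $\e[\Sigma_\infty] \le \e[\widetilde\eta_1]/(1-\rho) < \infty$ provided $\e[\widetilde\eta_1] < \infty$, which follows from $\e[\alpha_1] < \infty$, i.e.\ from $\e[H_1]<\infty$ — guaranteed under {\bf (R-2)}. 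With a bit more care (using that $\widetilde\eta_1 \le \alpha_1$ and moment bounds on $\alpha_1$ coming from {\bf (T-1)}/Doney's theorem, cf.\ Remark~\ref{rem:rogozin}(i)), $\Sigma_\infty$ inherits enough integrability — I would aim for $\e[\Sigma_\infty^{\kappa-1}]<\infty$ or $\p(\Sigma_\infty > t) = O(t^{-(\kappa-1)})$ in the polynomial case, and exponential tails in the {\bf (R-2$^\prime$)} case.

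Given the sandwich, monotonicity of $k\mapsto J_k(\theta)$ lets me write, for the lower bound,
\begin{equation}
F(\theta,(x,y]) \;\le\; \sum_{k=0}^\infty \p(\theta + H_k \in (x - W, y]) \quad\text{(on the event we condition on }W\text{)},
\end{equation}
and symmetrically $F(\theta,(x,y]) \ge \sum_k \p(\theta+H_k \in (x, y-W])$ is \emph{not} quite right because $W$ is random and correlated with the $H_k$; the cleaner route is to bound
\begin{equation}
\Big| F(\theta,(x,y]) - \big(R(y-\theta) - R(x-\theta)\big) \Big| \;\le\; \e\Big[\, \#\{k \ge 0 : \theta + H_k \in (x - W, x]\;\text{or}\;\theta+H_k \in (y-W, y]\,\}\Big]
\end{equation}
by pairing, for each $k$, the event $\{J_k \in (x,y]\}$ against $\{\theta + H_k \in (x,y]\}$ and noting their symmetric difference is contained in the two fringe strips of width $W$ just below $x$ and just below $y$ (using $\theta + H_k \le J_k \le \theta + H_k + W$). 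Now I would condition on $W$ (equivalently on the whole ladder sequence, but $W$ is the only thing that matters for the width), use that the expected number of renewal points of $H$ in an interval of length $w$ near location $t \gtrsim x$ is at most $R(t) - R(t-w) \le c_R w + 2C_\varphi$ by Lemma~\ref{lem:Rogozin}, and bound the contribution by $\e[\min(c_R W + 2C_\varphi,\, R(2x))]$. Splitting on $\{W \le x^{1/2}\}$ versus $\{W > x^{1/2}\}$: on the first event the strips sit at height $\ge x/2$ and the renewal estimate gives a contribution $O(\e[W]) + $ an error from the renewal remainder $\varphi$ evaluated near $x/2$, i.e.\ $O(x^{-(\kappa-2)})$ — wait, this needs sharpening: the term $\e[W]$ alone is $O(1)$, not $o(1)$, so I must instead use that on the strips the relevant quantity is $\int$ of the renewal \emph{remainder}, exploiting $R(t) - R(t-w) - c_R w = O((1+t)^{-(\kappa-2)})$ uniformly, giving each strip contribution $O(W \cdot x^{-(\kappa-2)})$ after integrating; on the second event $\{W > x^{1/2}\}$ we use the crude bound $R(2x) \le C_R x$ times $\p(W > x^{1/2})$, and here the tail bound on $\Sigma_\infty$ (hence on $W \approx e^{-\theta}\Sigma_\infty$ for small $W$, or $W \approx \log\Sigma_\infty - \theta$ for large $W$) must decay fast enough to kill the factor $x$ — polynomial tails of order $\kappa-1 > 1$ give $\p(W > x^{1/2}) \cdot x \to 0$ only if $(\kappa-1)/2 > 1$... so this split threshold needs to be tuned (take threshold $\epsilon \log x$ so that $W > \epsilon\log x$ means $\log\Sigma_\infty > \epsilon \log x + \theta \ge \epsilon\log x$, i.e.\ $\Sigma_\infty > x^\epsilon$, with probability $O(x^{-\epsilon(\kappa-1)})$, times $x$, which is $o(x^{-(\kappa-2)})$ once $\epsilon$ is close to $1$ — valid since $\kappa \ge 2$ makes $\epsilon(\kappa-1) - 1 > \kappa - 2$ for $\epsilon$ near $1$).

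The main obstacle, and where the real work lies, is precisely this last point: showing that the random "overshoot" $W$ — which is $O(1)$ typically but with a fat tail governed by $\Sigma_\infty$ — produces only an $O(x^{-(\kappa-2)})$ error and not an $O(1)$ one. This forces one to (a) extract a quantitative tail bound on $\Sigma_\infty$ from the moment hypotheses via Doney's theorem on ladder-height moments, and (b) be careful to pair $\{J_k \in (x,y]\}$ with $\{\theta+H_k \in (x,y]\}$ so that the \emph{renewal remainder} $\varphi$, not the leading $c_R\cdot(\text{width})$, controls the strip contributions (a strip of width $O(1)$ near height $x$ carries renewal mass $c_R \cdot O(1) = O(1)$, which would be fatal — but the \emph{difference} between $F$ and $R(\cdot)-R(\cdot)$ only sees the imbalance, which is why the careful symmetric-difference bookkeeping above is essential). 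In the {\bf (R-2$^\prime$)} case all of this is easier: $\Sigma_\infty$ has exponential tails, the threshold $\{W > \delta' x\}$ for small $\delta'$ has probability $O(e^{-c x})$, the crude bound $R(2x) \le C_R x$ is absorbed, and on $\{W \le \delta' x\}$ the strips sit at height $\ge (1-\delta')x$ where the renewal remainder $\varphi$ is already $O(e^{-C'_\varphi (1-\delta')x})$, yielding the stated $O(e^{-\delta x})$. Finally I would note that the uniformity in $0 \le \theta \le x/2$ and $x \le y \le 2x$ is automatic throughout since every estimate above was phrased in terms of $x - \theta \ge x/2$ and $y - x \le x$, and the law of $W$ depends on $\theta$ only through the harmless prefactor $e^{-\theta} \le 1$.
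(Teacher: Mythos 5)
Your starting point (the representation of Lemma~\ref{l:rewriteF} and the sandwich $\theta+H_k\le J_k(\theta)\le \theta+H_k+W$ with $W=\log(1+e^{-\theta}\Sigma_\infty)$, $\Sigma_\infty=\sum_{j\ge1}\widetilde\eta_j e^{-H_{j-1}}$) is the same as the paper's Lemma~\ref{lem:boundFR}, but the proposal has two genuine gaps. First, your integrability argument for $\Sigma_\infty$ is broken: you deduce $\e[\widetilde\eta_1]<\infty$ from $\widetilde\eta_1\le\alpha_1$ and ``$\e[\alpha_1]<\infty$, i.e.\ from $\e[H_1]<\infty$''. For a centered random walk the weak ascending ladder \emph{epoch} has $\p(\alpha_1>n)\asymp n^{-1/2}$, so $\e[\alpha_1]=\infty$; and Doney's theorem \cite{cf:Doney80} controls moments of the ladder \emph{height} $H_1$, not of $\alpha_1$. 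The finiteness (in fact exponential integrability) of $\widetilde\eta_1$ is a nontrivial input: the paper gets $\e[e^{\delta_1\widetilde\eta_1}]<\infty$ from \cite[Lemma A.2]{cf:CGH2} using {\bf (T-2)}, and then exponential tails for $\Sigma_\infty$ via the perpetuity estimate of \cite{Goldie-Grubel}; nothing in your argument supplies this, and with only polynomial tails for $\Sigma_\infty$ your own threshold arithmetic fails ($\epsilon(\kappa-1)-1\ge\kappa-2$ forces $\epsilon\ge1$, so the bad-event cost is never $o(x^{-(\kappa-2)})$).

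Second, and more structurally, the symmetric-difference bookkeeping cannot deliver the stated error for small $\theta$. When $\theta$ is of order one (e.g.\ $\theta=0$, which is allowed), $W$ is a nondegenerate $O(1)$ random variable, and the expected number of renewal points in two strips of width $W$ at heights $\approx x$ is $\asymp c_R\,\e[W]=O(1)$: taking absolute values destroys exactly the cancellation you would need. Your proposed repair — that each strip contributes only $O(W\,x^{-(\kappa-2)})$ ``via the renewal remainder'' — is not correct: Lemma~\ref{lem:Rogozin} controls $R(t)-R(t-w)-c_Rw$, and to profit from the cancellation of the two $c_Rw$ terms you must use a \emph{common deterministic} width and cope with the fact that $W$ is a functional of the whole ladder sequence; conditioning on $W$ biases the law of $(H_k)$, so the renewal estimates cannot be applied after conditioning. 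The paper resolves precisely these two difficulties by (i) replacing the random width by a deterministic $t$ at the price of the union-bound error $\Lambda(2x,e^{\theta}t)$ of \eqref{eq:defGamma}, controlled by the exponential tail of $\Sigma_\infty$ and a Cram\'er bound on $\{H_k\le\delta_4 k\}$, and (ii) the key extra step you are missing, Lemma~\ref{lem:MarkovJ}: run the chain $m=\lfloor x^{\delta}\rfloor$ steps first, so the effective starting point $J_m(\theta)\ge H_m$ is large with overwhelming probability, the perturbation becomes $e^{-J_m(\theta)}\Sigma_\infty$ and one may take $t$ exponentially small, while the first $m$ steps cost only $F_m\le m\,\p(J_m(\theta)>x)=O(m^{\kappa+1}x^{-\kappa})$. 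Without an analogue of this boosting step your error cannot be pushed below $O(1)$ uniformly in $\theta\in[0,x/2]$, so the proof as proposed does not go through.
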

  
     \medskip 
     
     

\medskip

Before presenting the proof of Proposition \ref{P:F}, we state two lemmas. The first lemma will allow us to jump from initial point $\theta$ to some larger (random) initial point. From this larger initial point, it will become possible to compare $F$ with $R$: upper and lower bounds are given in the second lemma, which is useful really for large $\theta$.

\begin{lemma}\label{lem:MarkovJ}
For every $\theta\in \R$, any Borel set $I\subset \R$ and any $m\ge 1$, we have
 \begin{align}
 F(\theta, I) & =   F_m(\theta, I) +  \e \left[F(J_m(\theta), I) \right]
 \end{align}
where \begin{align}
 F_m(\theta, I) & \, := \, \sum_{k=0}^{m-1} \p(J_k(\theta) \in I)\, .
\end{align}
\end{lemma}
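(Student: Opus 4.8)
\textbf{Plan of proof for Lemma~\ref{lem:MarkovJ}.} The statement to prove is the identity
\begin{equation*}
F(\theta, I) \,=\, F_m(\theta, I) + \e\big[F(J_m(\theta), I)\big]\,,
\end{equation*}
where $F_m(\theta, I) = \sum_{k=0}^{m-1} \p(J_k(\theta)\in I)$, and $F(\theta,I)=\sum_{k=0}^\infty \p(J_k(\theta)\in I)$ by Lemma~\ref{l:rewriteF}. So at first sight this looks like nothing more than splitting the sum at index $m$ and re-indexing. The plan is essentially to make this rigorous, the only nontrivial point being that after the re-indexing the tail sum $\sum_{k=m}^\infty \p(J_k(\theta)\in I)$ must be recognized as $\e[F(J_m(\theta), I)]$, which requires the Markov property of the chain $(J_k(\theta))_{k\ge 0}$ together with the fact that the increment structure is time-homogeneous and independent of the past.

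First I would write, using Lemma~\ref{l:rewriteF},
\begin{equation*}
F(\theta, I) \,=\, \sum_{k=0}^{m-1} \p(J_k(\theta)\in I) + \sum_{k=m}^\infty \p(J_k(\theta)\in I) \,=\, F_m(\theta,I) + \sum_{j=0}^\infty \p(J_{m+j}(\theta)\in I)\,,
\end{equation*}
where in the last step I substitute $k = m+j$. The point is then to identify $\sum_{j\ge 0}\p(J_{m+j}(\theta)\in I)$ with $\e[F(J_m(\theta), I)]$. For this I would recall from \eqref{eq:defJ} that $(J_k(\theta))_{k\ge 0}$ is a time-homogeneous Markov chain: indeed $J_{k+1}(\theta) = J_k(\theta) + \widetilde\Delta_{k+1} + \log(1+\widetilde\eta_{k+1}e^{-J_k(\theta)})$, and $((\widetilde\Delta_k,\widetilde\eta_k))_{k\ge1}$ is IID and adapted to $(\mathcal F_{\alpha_k})_{k\ge0}$. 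Hence conditionally on $\mathcal F_{\alpha_m}$ (in particular conditionally on $J_m(\theta)$), the chain $(J_{m+j}(\theta))_{j\ge0}$ is distributed as $(J_j(v))_{j\ge0}$ evaluated at the random starting point $v = J_m(\theta)$, driven by the independent increment pairs $(\widetilde\Delta_{m+1+i},\widetilde\eta_{m+1+i})_{i\ge0}$. Therefore $\p(J_{m+j}(\theta)\in I \mid \mathcal F_{\alpha_m}) = \p(J_j(v)\in I)\big|_{v=J_m(\theta)}$, and summing over $j$ and taking expectations,
\begin{equation*}
\sum_{j=0}^\infty \p(J_{m+j}(\theta)\in I) \,=\, \e\Big[\sum_{j=0}^\infty \p(J_j(v)\in I)\big|_{v = J_m(\theta)}\Big] \,=\, \e\big[F(J_m(\theta), I)\big]\,,
\end{equation*}
where the interchange of sum and expectation is justified by Tonelli since all terms are nonnegative, and the last equality is again Lemma~\ref{l:rewriteF} applied with starting point $J_m(\theta)$. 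Combining with the display above gives the claim.

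\textbf{Main obstacle.} There is no serious obstacle here; this is a bookkeeping lemma. The one point that deserves a line of care is the measurability/regular-conditional-distribution argument: one wants $v\mapsto F(v,I) = \sum_k \p(J_k(v)\in I)$ to be a Borel function of $v$ (so that $\e[F(J_m(\theta),I)]$ makes sense) and one wants the Markov property at the random index $m$ in the form ``restart the independent increments $(\widetilde\Delta,\widetilde\eta)$ from $m+1$''. Both are immediate from the explicit recursion in \eqref{eq:defJ}: $J_k(v)$ is, for each fixed $k$, a deterministic (continuous, hence Borel) function of $v$ and of the finitely many increment pairs, so $v\mapsto\p(J_k(v)\in I)$ is Borel and $v\mapsto F(v,I)$ is Borel as a countable sum; and the ``restart'' is just the fact that $(\widetilde\Delta_{m+1+i},\widetilde\eta_{m+1+i})_{i\ge0}$ is an IID sequence independent of $\mathcal F_{\alpha_m}$ with the same law as $(\widetilde\Delta_{1+i},\widetilde\eta_{1+i})_{i\ge0}$. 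So the proof really is: apply Lemma~\ref{l:rewriteF}, split the series at $m$, re-index, and apply the Markov property together with Lemma~\ref{l:rewriteF} once more to the shifted chain.
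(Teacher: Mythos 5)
Your proposal is correct and matches the paper's argument: the paper's proof is literally the one-line observation that this is the Markov property at time $m$ for the chain $(J_k(\theta))_{k\ge 0}$, applied (via Lemma~\ref{l:rewriteF}) to the series representation of $F$, which is exactly what you carried out, with the added (harmless and welcome) care about measurability and the restart of the IID pairs $(\widetilde\Delta_k,\widetilde\eta_k)$.
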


\begin{proof}
This is just an application of the Markov property at time $m$ for the Markov chain $(J_k(\theta))_{k\ge 0}$.
\end{proof}

\begin{lemma}\label{lem:boundFR}
For every $x$ and $y$ such that $0\le x\le y \le 2x$, for every $\theta\ge 0$ and $t>0$, we have
 \begin{equation}
 \begin{split}
 F(\theta, (x, y]) & \le R\left(y-\theta\right) -R\left(x- \theta - t\right)+ \Lambda(2x, e^{\theta} t) \\
 F(\theta, (x, y])  & \ge  R\left(y- \theta-t \right)-  R\left(x-\theta \right) - \Lambda(2x, e^{\theta} t)\, ,
 \end{split}
 \end{equation}
where  
\begin{align}\label{eq:defGamma}
\Lambda(x, t) & \, := \, \sum_{k=1}^\infty \p\Big( H_k\le x, \sum_{j=1}^\infty \widetilde\eta_j  \, e^{-   H_{j-1}}  > t \Big).
\end{align}
\end{lemma}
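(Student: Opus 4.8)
The plan is to exploit the representation $F(\theta,(x,y])=\sum_{k=0}^\infty \p(J_k(\theta)\in (x,y])$ from Lemma~\ref{l:rewriteF} together with the explicit formula \eqref{eq:defJ}, namely $J_k(\theta)=\theta+H_k+\log(1+e^{-\theta}\sum_{j=1}^k\widetilde\eta_j e^{-H_{j-1}})$. The point is that the logarithmic correction term $\varepsilon_k:=\log(1+e^{-\theta}\sum_{j=1}^k\widetilde\eta_j e^{-H_{j-1}})$ is nonnegative and increasing in $k$, and it is small as soon as $\sum_{j=1}^\infty \widetilde\eta_j e^{-H_{j-1}}$ is not too large compared to $e^\theta$. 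So I would split according to whether the (monotone, $k$-independent-in-the-limit) quantity $W:=\sum_{j=1}^\infty\widetilde\eta_j e^{-H_{j-1}}$ exceeds $t$ or not.

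First I would write, for the upper bound,
\begin{equation*}
F(\theta,(x,y])\,=\,\sum_{k=0}^\infty \p\big(\theta+H_k+\varepsilon_k\in(x,y]\big)\,\le\,\sum_{k=0}^\infty\p\big(\theta+H_k+\varepsilon_k\in(x,y],\ W\le t\big)+\sum_{k=0}^\infty\p\big(\theta+H_k\le 2x,\ W>t\big).
\end{equation*}
On the event $\{W\le t\}$ we have $0\le \varepsilon_k\le \log(1+e^{-\theta}t)\le e^{-\theta}t$, so $\theta+H_k+\varepsilon_k\in(x,y]$ forces $H_k\in (x-\theta-e^{-\theta}t,\, y-\theta]$; discarding the constraint $W\le t$ and summing over $k$ gives at most $R(y-\theta)-R(x-\theta-e^{-\theta}t)$ by definition \eqref{def-R} of the renewal function $R$ (using that $R$ counts $\sum_k\p(H_k\le\cdot)$ and is nondecreasing, with the convention $R(\cdot)=0$ on the negatives). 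The second sum is exactly $\Lambda(2x,e^\theta t)$ after noting $W>t \iff e^\theta W> e^\theta t$ — wait, I need to be careful: comparing with \eqref{eq:defGamma}, $\Lambda(2x,e^\theta t)=\sum_{k\ge1}\p(H_k\le 2x,\ W>e^\theta t)$, so to match I should take the threshold in the split to be $e^{-\theta}t$ rather than $t$ and relabel; concretely, replacing $t$ by $e^{-\theta}t$ throughout the split makes $\{W>e^{-\theta}\cdot e^{\theta}\cdot\text{(something)}\}$... Let me instead just carry the split at level $s$ and set $s=e^{-\theta}t$ at the end: then the good event contributes $R(y-\theta)-R(x-\theta-e^{-\theta}s\,e^{\theta}) $; choosing $s$ so the correction is $\le t$ and the bad event is $\{W> e^{\theta}\cdot(\text{level in }\Lambda)\}$ forces the level to be $t$, i.e. we split at $\{W\le e^{-\theta}\cdot e^\theta t\}$... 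The cleanest route: split at the event $B:=\{e^\theta W\le e^\theta t\}=\{W\le t\}$? This doesn't line up with the $\Lambda(2x,e^\theta t)$ in the statement, so the intended split must be at $\{W\le t'\}$ with $t'$ chosen so that $e^{-\theta}\cdot(\text{bound on }\varepsilon_k)=t$ and $W>t'\iff$ second term $=\Lambda(2x,e^\theta t)$; tracking the two requirements gives $t'=t$ is wrong and $t'=e^{-\theta}\cdot e^{\theta} t$... I would simply define the split by the event $\{\sum_j\widetilde\eta_j e^{-H_{j-1}}>e^{\theta}\cdot e^{-\theta}t\}$ — i.e. at level... I will resolve this bookkeeping in the writeup; the substantive content is: on the complement of the bad event $\varepsilon_k\le \log(1+t)\le t$ (no, $\le \log(1+e^{-\theta}\cdot e^\theta t)$...).

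The lower bound is entirely parallel: $F(\theta,(x,y])\ge \sum_k\p(\theta+H_k+\varepsilon_k\in(x,y],\ W\le t')$, and on $\{W\le t'\}$ the correction satisfies $0\le\varepsilon_k\le t$, so $\theta+H_k\in(x-\theta,\,y-\theta-t]$ is sufficient for membership in $(x,y]$ after adding $\varepsilon_k$; intersecting with $\{W\le t'\}$ and using inclusion–exclusion costs exactly the probability of the bad event, which is absorbed into $\Lambda(2x,\cdot)$ after enlarging $\{H_k\le y-\theta\}$ to $\{H_k\le 2x\}$ (legitimate since $y\le 2x$ and $\theta\ge0$). Summing over $k$ and invoking \eqref{def-R} gives $R(y-\theta-t)-R(x-\theta)-\Lambda(2x,\cdot)$.

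The only genuinely delicate point — and the main obstacle — is matching the threshold in $\Lambda$ with the bound on the logarithmic correction $\varepsilon_k$: one needs $\log(1+e^{-\theta}u)\le t$ for $u$ below the threshold while simultaneously having the threshold appear as $e^\theta t$ inside $\Lambda(2x,e^\theta t)=\sum_{k\ge1}\p(H_k\le 2x,\ \sum_j\widetilde\eta_j e^{-H_{j-1}}>e^\theta t)$. Since $\log(1+e^{-\theta}\cdot e^{\theta}t)=\log(1+t)\le t$, the correct split is precisely at the event $\{\sum_j\widetilde\eta_j e^{-H_{j-1}}\le e^{\theta}t\}$, on which $\varepsilon_k\le\log(1+t)\le t$, and whose complement contributes (after enlarging $\{H_k\le y-\theta\le 2x\}$) exactly $\Lambda(2x,e^\theta t)$; then $\{H_k\le y-\theta,\ \varepsilon_k\le t\}$ for the upper bound gives $H_k\le y-\theta$ (we may even drop the $\varepsilon_k$ lower cushion), and the subtracted term $R(x-\theta-t)$ comes from the other inequality $\theta+H_k+\varepsilon_k>x \Rightarrow H_k>x-\theta-t$. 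I would state this carefully and then the two displayed inequalities follow by summing the renewal bounds; everything else is routine monotonicity of $R$ and the fact that $k\mapsto\varepsilon_k$ and the partial sums $\sum_{j=1}^k\widetilde\eta_je^{-H_{j-1}}$ are nondecreasing, so the single threshold $e^\theta t$ controls all $k$ at once.
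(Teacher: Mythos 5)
Your final paragraph lands on exactly the paper's argument: sandwich $J_k(\theta)$ between $\theta+H_k$ and $\theta+H_k+e^{-\theta}\sum_{j}\widetilde\eta_j e^{-H_{j-1}}$, split on the event $\{\sum_j\widetilde\eta_j e^{-H_{j-1}}\le e^{\theta}t\}$ so that the logarithmic correction is at most $\log(1+t)\le t$, absorb the complementary event into $\Lambda(2x,e^{\theta}t)$ using $y-\theta-t\le 2x$, and sum the renewal probabilities — so the proposal is correct and takes essentially the same route as the paper. The only point to tidy in the writeup is that the $k=0$ term requires no bad-event correction (since $J_0(\theta)=\theta$ exactly), which is what makes the error term match $\Lambda$, whose sum starts at $k=1$.
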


\begin{proof}
We use \eqref{eq:rewriteF} for an expression of $F(\theta, (x, y])$. By \eqref{eq:defJ}, we have 
\begin{equation}H_k+\theta\le J_k(\theta)\le H_k+\theta+e^{-\theta}  \sum_{j=1}^\infty \widetilde\eta_j  \, e^{-   H_{j-1}}.\end{equation}
Hence for $z\in \{x, y\}$, 
\begin{equation}\p(J_k(\theta)\le z) \le \p(H_k\le z-\theta)\end{equation}
and
\begin{equation}
\begin{split}
\p\big(J_k(\theta)\le z\big) 
 & \ge \p\Big(H_k+\theta +e^{-\theta}  \sum_{j=1}^\infty \widetilde\eta_j  \, e^{-   H_{j-1}} \le z\Big)\\
& \ge  \p\Big(H_k+\theta + t\le z, \sum_{j=1}^\infty \widetilde\eta_j  \, e^{-   H_{j-1}} \le e^{\theta}t\Big)\\
& \ge  \p\big(H_k+\theta + t\le z\big)-\p\Big(H_k \le 2 x, \sum_{j=1}^\infty \widetilde\eta_j  \, e^{-   H_{j-1}} > e^{\theta}t \Big)\,,
\end{split}
\end{equation}
using $z-\theta-t\le 2x$ in the last line. Applying these two inequalities with $z=y$ then $z=x$, and summing over $k$,  we get the lower and the upper bounds in the lemma.
  \end{proof}

\medskip

{\noindent\it Proof of Proposition \ref{P:F}.}  The asymptotic behavior  of $F$ will follow from Lemmas \ref{lem:MarkovJ} and \ref{lem:boundFR} once we have obtained the estimates on $F_m(\theta,(x,y]), \e [ R(y- J_m(\theta))-R(x-J_m(\theta)-t)], \e [ R(y- J_m(\theta)-t)-R(x-J_m(\theta))]$ and $\e[\Lambda(2x, e^{J_m(\theta)}t)]$. This will be done separately in what follows. 

For ease of exposition we introduce the symbol   $O_{\mathtt{u}}(f(s))$ to denote a quantity whose absolute value is bounded by   $C |f(s)|$ with  $C>0$ independent of the parameter $s$. In general $s$ may be of dimension larger than one, and belong to a domain that will be specified. 

\medskip
\subsubsection{Estimate of $F_m(\theta,(x, y])$} Since $F_m(\theta,(x, y])\, \le \,
\sum_{k=0}^{m-1} \p\left(J_k(\theta) >x\right) \le m  \p\left(J_m(\theta)>x\right)$, our goal is to estimate $\p\left(J_m(\theta) > x\right)$. 
We observe that for all $x\ge 4$, $\theta \in [0, x/2]$ and $m\ge 1$, if $H_m \le \frac{x}{4}$ and $\sum_{j=1}^\infty \widetilde\eta_j  \, e^{-  H_{j-1}} \le  e^{x/2}$, then
$$J_m(\theta)\,\le\,     H_m+\log\Bigg(e^{\theta} + \sum_{j=1}^\infty \widetilde\eta_j  \, e^{-   H_{j-1}}\Bigg)\le \frac{x}4 + \log\big(2e^{\frac{x}2}\big) \le x.$$
Hence, for all  for all $x\ge 4$, $\theta \in [0, x/2]$ and $m\ge 1$,
 \begin{align}
\p(J_m(\theta) > x) \, &\le\, \p\left( H_m > \frac{x}{4}\right) + \p\left(\sum_{j=1}^\infty \widetilde\eta_j  \, e^{-  H_{j-1}}> e^{x/2}\right) \label{YkIJ} 
\end{align}

We first estimate the second term in \eqref{YkIJ}, as the estimation of first term will depend on whether the case 
{\bf (R-2)} or {\bf (R-2$^\prime$)} applies.

Remark that $\widetilde \eta_1= 1+ \sum_{i=1}^{\alpha_1-1} e^{S_i - H_1} \le 1 + \sum_{i=1}^\infty e^{S_i} {\bf 1}_{\{\max_{0\le j\le i} S_j < 0\}}$ by using the definition of $\alpha_1$ and the fact that $H_1\ge 0$.    By \cite[Lemma  A.2]{cf:CGH2}, $\bbE \left[ \logZ^2\right]< \infty$ (i.e.,  hypothesis  {\bf (T-2)})  implies that for some $\delta_1>0$, \begin{equation}  \label{exponentialetaj}   \bbE[\exp( \delta_1   \widetilde\eta_1)]< \infty. \end{equation}

\noindent Recall that $(\widetilde\eta_\ell, H_\ell-H_{\ell-1})_{\ell\ge1}$ is an IID sequence. By \eqref{exponentialetaj}, we can apply    \cite[Theorem 2.1]{Goldie-Grubel} to obtain some $\delta_2>0$ such that  
 \begin{equation}
 \label{eq:Cdelta_0}
 C_{\delta_2}\,:=\,  \e e^{\delta_2 \sum_{j=1}^\infty \widetilde\eta_j  \, e^{-   H_{j-1}}} \,<\, \infty\,. 
 \end{equation} 
 We deduce from \eqref{eq:Cdelta_0} and  the Markov inequality that for every $\lambda>0$
\begin{equation} 
\p\Bigg(\, \sum_{j=1}^\infty \widetilde\eta_j  \, e^{-   H_{j-1}}  > \lambda\Bigg)
 \, \le\,  C_{\delta_2} e^{-  \delta_2\, \lambda}\, . 
    \label{delta_0}  \end{equation}
In particular,
\begin{equation} \p\Bigg(\sum_{j=1}^\infty \widetilde\eta_j  \, e^{-  H_{j-1}}> e^{x/2}\Bigg) \, \le\,  C_{\delta_2} e^{-\delta_2\, e^{x/2}}. \label{eq:JYkIJ}
   \end{equation}

It remains to estimate $\p\left( H_m > \frac{x}{4}\right)$ according to whether the hypothesis {\bf (R-2)} or {\bf (R-2$^\prime$)} applies. We start with the case {\bf (R-2)}. As discussed right after \cref{lem:Rogozin}, hypothesis  {\bf (T-1)} of \cref{thm:stationary_asymptotics} 
yields that the $\kappa=\xi-1$ moment of $H_1$ is finite, hence, using the Rosenthal inequality \cite{cf:IS}, there exists a constant $C_\kappa>0$ such that  
\begin{equation}\e\left[H_n^\kappa\right]\, \le\, C_\kappa\,  n^\kappa, \qquad \text{ for } n=1,2, \ldots. \label{ehn-kappa}\end{equation}
  \noindent Consequently, using the Markov inequality, for any $x>0$ and $m\ge 1$
  \begin{equation}  
  \p\left( H_m > \frac{x}{4}\right) \le  4^\kappa\, C_\kappa\, m^\kappa\, x^{-\kappa}, \qquad \mbox{in the case {\bf (R-2)}}. \label{eq:IYkIJ-1} 
  \end{equation}
 
 In the case {\bf (R-2$^\prime$)}, $\e[e^{c H_m}]= e^{c_H \, m} $ with $c_H:= \log \e[e^{c H_1}] \in (0,\infty)$. The Markov inequality yields that for any $x\ge 0$ and $m\ge 1$, \begin{equation} \p\left( H_m > \frac{x}{4}\right)  \le   e^{c_H \, m -c x/4}, \qquad \mbox{in the case {\bf(R-2$^\prime$)}}. \label{eq:IYkIJ-2} \end{equation}
  
  Applying \eqref{eq:JYkIJ}, \eqref{eq:IYkIJ-1} and \eqref{eq:IYkIJ-2} to \eqref{YkIJ}, we get that for all  $x\ge 4$,  $\theta \in [0, x/2]$ and $m\ge 1$,

  \begin{equation} \p(J_m(\theta)>x) 
 =\, 
 \begin{cases}
    O_{\mathtt{u}} (m^{\kappa} x^{-\kappa}), \qquad & \mbox{in the case {\bf (R-2)}}, 
 \\
 \,   O_{\mathtt{u}} (e^{c_H m-c x/4}), \qquad & \mbox{in the case {\bf (R-2$^\prime$)}},
 \end{cases}   \label{PJmthetagex} \end{equation}  
 so
  \begin{equation}  F_m(\theta,x) 
 =\, 
 \begin{cases}
 O_{\mathtt{u}}(   m^{\kappa+1} x^{-\kappa}), \qquad & \mbox{in the case {\bf (R-2)}}, 
 \\
 O_{\mathtt{u}} ( m\,   e^{c_H m-c x/4}) , \qquad & \mbox{in the case {\bf (R-2$^\prime$)}}.
 \end{cases}   \label{Fmtheta} \end{equation}  
 
%
%

\subsubsection{Estimate of $\e [ R(y- J_m(\theta))- R(x-J_m(\theta)- t)]$} 
We note that, by \eqref{R(x)2}, $R(x)=O_{\mathtt{u}}(x)$ for $x \ge 1$, and  we observe that $R$ is nondecreasing. 

Let $x\ge 2, y\in[x, 2x]$, $m\ge 1$ and $t\in(0,1)$. Whenever $J_m(\theta)\le x/2$, we have, as a consequence of Lemma~\ref{lem:Rogozin}:
\begin{equation}R(y- J_m(\theta))- R(x-J_m(\theta)- t) 
= c_R (y-x+t) 
+O_{\mathtt{u}}\left(\varphi\left(\frac{x}{2}-1\right)\right),\end{equation}
with the function $\varphi$ defined in \eqref{Rphi}.
Hence, for $x\ge 2, y\in[x, 2x]$, $m\ge 1$ and $t\in(0,1)$
\begin{align}  
& \e \left[R(y- J_m(\theta))- R(x-J_m(\theta)- t) \right]  \nonumber
\\
& \qquad =\, 
\e \left[\left(c_R (y-x+t) +O_{\mathtt{u}}\left(\varphi\left(\frac{x}{2}-1\right)\right)\right) \ind_{\{J_m(\theta)\le \frac{x}2\}}\right] +  \e \left[ O_{\mathtt{u}}(x) \ind_{\{J_m(\theta)> \frac{x}2\}}\right]  \nonumber
\\
& \qquad =\, c_R (y-x+t)
+O_{\mathtt{u}}\left(\varphi\left(\frac{x}{2}-1\right)\right)
+ O_{\mathtt{u}}\left( x \p\left(J_m(\theta)> \frac{x}2\right)\right). \label{x-Ymtheta}
\end{align}

Using \eqref{PJmthetagex}, we conclude that for all for $x\ge 8, y\in[x, 2x]$, $\theta\in[0, {x}/2]$, $m\ge 1$ and $t\in(0,1)$
 \begin{multline} \e [ R(y- J_m(\theta))- R(x-J_m(\theta)- t)] \,=\\
  c_R (y-x+t)+   \begin{cases}    
O_{\mathtt{u}}\left( x^{-(\kappa-2)} + m^\kappa x^{-(\kappa-1)}\right),   & \mbox{in the case {\bf (R-2)}}, \\
O_{\mathtt{u}}\left(  e^{c_H m -\delta_3\, x  }\right),   & \mbox{in the case {\bf (R-2$^\prime$)}},
\end{cases}   \label{RYtheta} \end{multline}
where $\delta_3$ is an arbitrarily fixed constant, satisfying $0<\delta_3<\min(c/8, C_\varphi')$.

\subsubsection{Estimate of $\e [ R(y- J_m(\theta)-t)- R(x-J_m(\theta))]$}

Exactly in the same manner, we obtain that for all $x\ge 8, y\in[x, 2x]$, $\theta\in[0, {x}/2]$, $m\ge 1$ and $t\in(0,1)$
 \begin{multline} \e [ R(y- J_m(\theta)-t)- R(x-J_m(\theta))] \,=\\
  c_R (y-x-t)+   \begin{cases}    
O_{\mathtt{u}}\left( x^{-(\kappa-2)} + m^\kappa x^{-(\kappa-1)}\right),   & \mbox{in the case {\bf (R-2)}}, \\
O_{\mathtt{u}}\left(  e^{c_H m -\delta_3\, x  }\right),   & \mbox{in the case {\bf (R-2$^\prime$)}}. 
\end{cases}   \label{RYtheta'} \end{multline}

\subsubsection{Estimate of $\e[\Lambda(2x, e^{J_m(\theta)}t)]$} 
Recall \eqref{eq:defGamma} for the definition of $ \Lambda(\cdot, \cdot)$. For any $n\ge 1$ and $s>0$, we have:
\begin{equation} \Lambda(x, s)      
     \le \sum_{k=n}^\infty \p[H_k\le x] + n   \p\Bigg(  \sum_{j=1}^\infty \widetilde\eta_j  \, e^{-   H_{j-1}} > s \Bigg).
     \label{Upsilon1-0}
       \end{equation}
  The second term in \eqref{Upsilon1-0} is controlled using \eqref{delta_0}: it is bounded by $C_{\delta_2}\,n\, e^{-  \delta_2 s}$.      
For the first term in \eqref{Upsilon1-0} observe that it is equal to 
\begin{equation}\e [R(x-H_n)]\le R(x) \p(H_n \le x)= O_{\mathtt{u}}(x \p(H_n \le x)),\end{equation}
assuming that $x\ge 1$. 
We complete this estimate by observing that, since
   $\e[H_1]>0$, by  the classical Cram\'er large deviation theorem  there exist $C'>0$ and some small $\delta_4>0$ such that    
   \begin{equation} \label{eq:fromCramer0}
   \p\left(H_k \le \delta_4 k\right) \,\le\, C' e^{-\delta_4 k}\qquad \text{for  } k =1,2, \ldots .       
   \end{equation} 
Consequently, choosing $n : =\lceil  x/\delta_4\rceil$ in \eqref{Upsilon1-0}, we obtain, for $x\ge 1$ and $s>0$,
\begin{equation}\label{eq:boundGamma}
\Lambda(x, s) = O_{\mathtt{u}}( x (e^{-x} + e^{-\delta_2 s})).
\end{equation}

Furthermore, for $\theta\ge 0$, by using the fact that $J_m(\theta)\ge H_m$ and by using \eqref{eq:fromCramer0} again, we have
 \begin{equation}
  \e\Big[e^{-  \delta_2 t e^{ J_m(\theta)}} \Big] 
 \le  \e\Big[ e^{-  \delta_2 t e^{ H_m}}\Big] 
 \le e^{- \delta_2 t e^{\delta_4 m}} +C' e^{-\delta_4 m},\end{equation} 
 which, combined with \eqref{eq:boundGamma}, shows that for all $x\ge 1$, $y\in[x, 2x]$, $\theta\ge 0$, $m\ge 1$
and $t\in (0,1)$,
   \begin{equation}  
    \e\big[\Lambda\big(2x, e^{J_m(\theta)}t\big) \big]
    \,=\, O_{\mathtt{u}} 
    \left(x  \left(e^{-  2 x} +  e^{- \delta_2 t e^{\delta_4 m}} +e^{-\delta_4 m}\right) \right) \,. \label{Upsilon<}
    \end{equation}


%
%
%
%
 
 \subsubsection{Conclusion of the proof of Proposition~\ref{P:F}}
 Now we are ready to give the proof of \eqref{F-asymp}. 
 We make our choice for $m$ and $t$.
 In the case {\bf (R-2)}, we take $m:=\lfloor x^{\delta}\rfloor$ with some $ \delta \in (0, {1}/{\kappa})$.
 In the case {\bf (R-2$^\prime$)}, we take $m:= \lfloor{\delta_3x }/{(2 c_H)}  \rfloor $ , where we recall the constants $ c_H, \delta_3 $ appeared in \eqref{RYtheta}. In both cases, we set $t:=\exp({-{\delta_4m}/{2}})$.
 
   By Lemmas \ref{lem:MarkovJ} and \ref{lem:boundFR}, together with estimates \eqref{Fmtheta}, \eqref{RYtheta}, \eqref{RYtheta'} and \eqref{Upsilon<}, for all $x\ge 8, y\in[x, 2x]$, $\theta\in[0, {x}/2]$,
   \begin{equation} 
   F(\theta,(x, y]) \, = \,  c_R (y-x) + \begin{cases}    
O_{\mathtt{u}} \left(  x^{-(\kappa-2)}\right)\,,   & \mbox{in the case {\bf (R-2)}}, \\
O_{\mathtt{u}} \left(     e^{-\delta_5\, x  }\right),   & \mbox{in the case {\bf (R-2$^\prime$)}}, 
\end{cases}         \end{equation}
\noindent with $\delta_5:= \, \min(\frac{\delta_3}{2} , \frac{\delta_3 \delta_4}{4c_H}, 1)$, 
and the proof of Proposition \ref{P:F} is complete.
\qed

\subsection{Step 4: technical estimates for the control of the cumulative function of $\nu$.} 
      
      Recall that we have chosen $a>0$ in order to ensure that $\nu((-\infty, a])>0$. Recall \eqref{pi-V}, i.e., that for any $a< x \le  y \le 2x$,  \begin{equation}    \nu((x, y])= \int_{(-\infty, a]} \e_{v}[V_a(Y_1, (x, y]) \ind_{\{Y_1 > a\}} ]\nu(\dd v). 
      \label{nua-x}\end{equation} 
In the representation of $V_a(b, (x, y])$ given in Lemma \ref{l:Theta}, we are going to replace $F$ by its asymptotic expression  in the right-hand side of \eqref{F-asymp}.
To this end, we need to control the error terms, this is the goal of the next lemma.      
      Recall the definition of process $\Theta$ just after \eqref{eq:ladder-times} and the notation $\tau_\Theta$ introduced in  Lemma \ref{l:Theta}. 
      
      \begin{lemma} \label{l:beta} If $a$ is large enough, there exists $C>0$ such that for every $b> a$:
      \begin{equation} 
      {\e_b\left[\tau_\Theta(a)\right] \,\le \,  C  \,  b \,,} \label{Etau} 
	      \end{equation}
      and, for every $x\ge 0$,
      \begin{equation}   
      \e_b \left[  \sum_{j=0}^{\tau_\Theta(a)-1} \ind_{\{\Theta_j\ge x \}}\right]\,  
      \le\,  C\, e^{b-x} .  \label{beta}
      \end{equation} \end{lemma}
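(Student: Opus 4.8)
\textbf{Proof proposal for Lemma~\ref{l:beta}.} The plan is to control both quantities by comparing the chain $\Theta$, above level $a$, with the random walk $S$ at its strictly descending ladder epochs. The first observation is that, by \eqref{Thetaj+1} and \eqref{def-eta-Delta}, as long as $\Theta_j$ is large we have roughly $\Theta_{j+1}\approx \Theta_j-\Delta_{j+1}$, since the correction term $\log(1+\eta_{j+1}e^{-\Delta_{j+1}}e^{-\Theta_j})$ is small; more precisely $\Theta_{j+1}\le \Theta_j-\Delta_{j+1}+\eta_{j+1}e^{-\Theta_j}$ and also $\Theta_{j+1}\ge\log\eta_{j+1}$ always, while $\Theta_{j+1}\ge \Theta_j-\Delta_{j+1}$. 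So up to the (summably small, thanks to the exponential moment bound \eqref{exponentialetaj} on the $\eta$'s) perturbation, $\Theta$ behaves like $b$ minus a random walk with i.i.d.\ positive increments $\Delta_j$ having $\e[\Delta_1]=\e[\logZ^2]/(2\e[H_1])<\infty$ (Wald/ladder identity), the key point being $\e[\Delta_1]\in(0,\infty)$ under \textbf{(T-2)}.

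For \eqref{Etau}: I would construct a dominating process. Fix a level $a$ large enough that for $\theta>a$ the correction term is bounded, say, by $1$ and in fact has a uniform (in $\theta>a$) exponential moment — this is where one uses that $\sum_j\widetilde\eta_je^{-H_{j-1}}$ has an exponential moment \eqref{eq:Cdelta_0}, together with $\eta_{j+1}e^{-\Theta_j}\le \eta_{j+1}e^{-a}$. Then on the event $\{\tau_\Theta(a)>j\}$ one has $\Theta_{j}\le b-\sum_{i=1}^{j}\Delta_i+\sum_{i=1}^j\xi_i$ where $\xi_i\le \eta_ie^{-a}$ are nonnegative with uniformly bounded expectation $\le \e[\eta_1]e^{-a}$, which can be made $<\e[\Delta_1]/2$ by taking $a$ large. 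Hence $\Theta_j\le b-(j)\cdot(\e[\Delta_1]/2)+(\text{centered part})$, and $\tau_\Theta(a)$ is dominated by the first passage time below $a$ of a random walk with strictly positive drift $\e[\Delta_1]/2$ started at $b$; a standard Wald-type bound (or optional stopping applied to $\Theta_j + j\,\e[\Delta_1]/2$, using $\e_b[\tau_\Theta(a)]<\infty$ which itself follows since the drift is positive and increments have finite mean) gives $\e_b[\tau_\Theta(a)]\le C b$ for $b>a$. One should first check $\e_b[\tau_\Theta(a)]<\infty$: this is immediate because between consecutive ladder epochs the walk $S$ makes a negative step of mean $\e[\Delta_1]>0$, so $\Theta$ has genuine negative drift at large values and the downcrossing time of $a$ is integrable.

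For \eqref{beta}: write $\e_b[\sum_{j=0}^{\tau_\Theta(a)-1}\ind_{\{\Theta_j\ge x\}}]=\sum_{j\ge0}\p_b(\tau_\Theta(a)>j,\ \Theta_j\ge x)$. For $x\le b$ (say $x\le b+O(1)$) one just bounds this crudely by $\e_b[\tau_\Theta(a)]\le Cb\le C'e^{b-x}$ when $b-x=O(1)$, so the content is the regime $x\gg b$. There, $\{\Theta_j\ge x\}$ forces the perturbed-ladder walk to have climbed from $b$ up to $x$; since the increments $\xi_i-\Delta_i$ have negative mean and $\xi_i$ has an exponential moment while $-\Delta_i\le 0$, a Chernoff/exponential-Markov estimate gives $\p_b(\Theta_j\ge x)\le \p(b+\sum_{i\le j}(\xi_i-\Delta_i)\ge x)\le e^{-(x-b)}\,\bigl(\e[e^{\xi_1-\Delta_1}]\bigr)^{j}$, and $\e[e^{\xi_1-\Delta_1}]<1$ once $a$ is large enough (for $a\to\infty$, $\e[e^{\xi_1}]\to1$ while $\e[e^{-\Delta_1}]<1$ strictly, $\Delta_1>0$ nondegenerate). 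Actually one must be slightly careful because $\xi_i$ depends on $\Theta_{i-1}\ge a$ only through the bound $\eta_ie^{-a}$, so use $\xi_i\le \eta_i e^{-a}=:\bar\xi_i$ with $(\bar\xi_i,\Delta_i)$ i.i.d., and sum the geometric series: $\sum_{j\ge0}\p_b(\Theta_j\ge x)\le e^{-(x-b)}/(1-\e[e^{\bar\xi_1-\Delta_1}])=Ce^{b-x}$. Combining the two regimes yields \eqref{beta}.

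\textbf{Main obstacle.} The delicate point is making the ``$\Theta$ is a perturbed descending-ladder walk'' comparison rigorous and \emph{uniform} in the starting point $b>a$: one needs that the upward corrections $\eta_{j+1}e^{-\Theta_j}$ are controlled by an i.i.d.\ dominating sequence with the right exponential moment, and simultaneously that the lower bound $\Theta_{j+1}\ge\Theta_j-\Delta_{j+1}$ (plus $\Theta_{j+1}\ge\log\eta_{j+1}$, keeping $\Theta$ from plunging too far below $a$ in one step, which matters for iterating the drift estimate after an overshoot) is used correctly. Choosing $a$ large enough so that $\e[\eta_1]e^{-a}<\e[\Delta_1]/2$ and $\e[e^{\eta_1e^{-a}-\Delta_1}]<1$ simultaneously is the crux; both are possible since $\e[\Delta_1]=\e[\logZ^2]/(2\e[H_1])>0$ by \textbf{(T-2)} and $\eta_1$ has an exponential moment by \eqref{exponentialetaj}. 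Everything else is a routine optional-stopping / Chernoff computation.
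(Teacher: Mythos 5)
Your overall strategy is the right one, and it is in fact the same as the paper's: above the level $a$, dominate $\Theta$ by a random walk with IID increments of negative drift (obtained by bounding $e^{-\Theta_j}\le e^{-a}$), then get \eqref{Etau} from a standard first-passage expectation bound and \eqref{beta} from an exponential Markov bound summed as a geometric series. However, your key comparison inequality is wrong as stated, and the error propagates. From \eqref{Thetaj+1} one has, exactly,
\begin{equation}
\Theta_{j+1}\,=\,\Theta_j-\Delta_{j+1}+\log\!\left(1+\eta_{j+1}\,e^{\Delta_{j+1}}\,e^{-\Theta_j}\right),
\end{equation}
so the correction term carries an extra factor $e^{\Delta_{j+1}}$: your claimed bound $\Theta_{j+1}\le \Theta_j-\Delta_{j+1}+\eta_{j+1}e^{-\Theta_j}$ is false (take $\Delta_{j+1}$ large with $\eta_{j+1}$ of order one: the left side stays above $\log\eta_{j+1}$ while your right side goes to $-\infty$). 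Consequently the IID dominating corrections are not $\xi_i\le\eta_i e^{-a}$ but $\xi_i\le \eta_i e^{\Delta_i}e^{-a}$. This still has finite mean (by duality $\e[\eta_1 e^{\Delta_1}]=\sum_k\e[e^{-H_k}]<\infty$), so your drift/Wald argument for \eqref{Etau} can be repaired, but your Chernoff step for \eqref{beta} cannot be run as written: you would need $\e[\exp(\eta_1 e^{\Delta_1}e^{-a}-\Delta_1)]<1$, and $\eta_1 e^{\Delta_1}=\sum_{n=\varrho_0}^{\varrho_1-1}e^{S_{\varrho_0}-S_n}\le \varrho_1$ is controlled by the excursion length, which has no exponential moments for a centered walk, so the domination you invoke (exponential moments of the correction via \eqref{exponentialetaj}) is not available for the corrected quantity.

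The fix is not to split the increment additively at all, which is exactly what the paper does: for $\Theta_j>a$,
\begin{equation}
\Theta_{j+1}-\Theta_j\,=\,\log\!\left(e^{-\Delta_{j+1}}+\eta_{j+1}e^{-\Theta_j}\right)\,\le\,\log\!\left(e^{-\Delta_{j+1}}+\eta_{j+1}e^{-a}\right)\,=:\,W_{j+1}-W_j,
\end{equation}
with $(W_j)$ a genuine random walk (IID increments, $W_0=b$) dominating $\Theta$ up to $\tau_\Theta(a)$. Then the exponential of the increment is simply $e^{-\Delta_1}+\eta_1 e^{-a}$, whose expectation is $\e[e^{-\Delta_1}]+e^{-a}\e[\eta_1]<1$ for $a$ large (only $\e[\eta_1]<\infty$ is needed, which follows from the exponential moment of $\eta_1$ as in \eqref{exponentialetaj}); Jensen gives the negative drift, \eqref{Etau} follows from the standard bound $\e_b[\tau_W(a)]=O(1+b-a)$ (Gut), and \eqref{beta} follows directly from $\p_b(W_j\ge x)\le e^{b-x}\,(\e_0[e^{W_1}])^j$ summed over $j$, with no case distinction between $x\le b$ and $x\gg b$ needed. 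So: right plan, but the additive decomposition with correction $\eta_{j+1}e^{-\Theta_j}$ is incorrect, and keeping the increment in its ``log of a sum'' form is what makes both estimates go through.
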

    \medskip
      
   \begin{proof}
    Recall \eqref{Thetaj+1}. If $\Theta_j>a$, then 
    \begin{equation}
	    \begin{split}
		    \Theta_{j+1}-\Theta_j
   &= \log ( e^{-\Delta_{j+1}}  +  \eta_{j+1} e^{-\Theta_j}) 
   \\
   &\le \log \big( e^{-\Delta_{j+1}}  +  \eta_{j+1} e^{-a} \big)=:W_{j+1}-W_j.
	    \end{split}
    \end{equation}
    
 \noindent In other words, if we consider a real-valued random walk $(W_j)$  with step distribution given as above and starting from $W_0:=b>a$, then for any $j \le \tau_\Theta(a)$, $\Theta_j \le W_j$, hence 
 $$
 \e_b\left[\tau_\Theta(a)\right]
 \,\le\,
 \e_b\left[\tau_W(a)\right]\,,$$
 where $\tau_W(a):= \inf\{n\ge 1: W_n \le a\}$. Similar to \eqref{exponentialetaj}, $\eta_1$ also has some exponential moments; in particular its expectation is finite. Note that for $a$ large enough 
$$\e_0 [e^{W_1}]= \e [e^{-\Delta_1}  +  \eta_1 e^{-a} ] = \e [e^{-\Delta_1} ] + e^{-a} \e [\eta_1 ]<1,$$
  which yields in particular that $\e_0[W_1]<0$.     
   Then it is well known, see \cite[Theorem 3.6.1]{Gut},  that 
   \begin{equation}\e_b\left[\tau_W(a)\right]\, =\, O_{\mathtt{u}}( 1+b-a)\, ,
   \label{gut:b-a}\end{equation} which   gives \eqref{Etau}.
   To show \eqref{beta}, we use again the comparison   $\Theta_j \le W_j$ for all $j \le \tau_\Theta(a)$:  
 \begin{equation*} \e_b \Bigg[   \sum_{j=0}^{\tau_\Theta(a)-1} \ind_{\{\Theta_j\ge x \}}\Bigg]   
 = \sum_{j=0}^\infty \p_b(j < \tau_\Theta(a),  \Theta_j\ge x) 
 \le
 \sum_{j=0}^\infty \p_b(W_j\ge x).
   \end{equation*}
  By the Markov bound, $ \p_b(W_j \ge  x)=\p_0(W_j\ge x-b) \le e^{b-x} (\e_0[e^{W_1}])^j$. Taking  the sum over $j$ yields \eqref{beta} and the proof of Lemma~\ref{l:beta} is complete.
  \end{proof}

   \medskip
We are now ready for:

\begin{proof}[Proof of \cref{thm:stationary_asymptotics}]
We have already remarked  that  {\bf (T-1)} of \cref{thm:stationary_asymptotics} yields {\bf (R-2)} of \cref{lem:Rogozin} by \cite{cf:Doney80}, with $\kappa=\xi-1$. Moreover
	the condition {\bf (H-1)} of \cref{th:main} -- one of the  hypotheses in \cref{thm:stationary_asymptotics} --
	 implies  {\bf (R-1)}  by the following argument. By {\bf (H-1)},  $S_{n_0}$ has a density  with respect to the Lebesgue measure, hence
	 $S_k$, $k\ge n_0$ has a density too. Recall the definitions \eqref{eq:ladder-alpha} and \eqref{def-R}: if $H_{n_0}$ is singular with respect to the Lebesgue measure there exists a Borel set $B\subset [0, \infty)$ of zero Lebesgue measure and full measure under the law of $H_{n_0}$, that is $1=\bbP\left( H_{n_0} \in [0, \infty)\right)= \bbP\left( H_{n_0} \in B\right)$.
But $ \bbP\left( H_{n_0} \in B\right)= \sum_{k \ge n_0} \bbP\left( S_{k} \in B, \, \ga_{n_0}=k\right)\le \sum_{k \ge n_0} \bbP\left( S_{k} \in B\right)=0$. We have reached a contradiction, hence $H_{n_0}$ is not singular. Hence {\bf (R-1)} holds and we can use the conclusions of  \cref{lem:Rogozin}.

	Assume that $a$ is large enough so that Lemma \ref{l:beta} holds. By  \eqref{pi-V}, for every $a<x\le y \le 2x$
	  \begin{equation}
	   \label{nuax}
	  \begin{split}    
	  \nu((x, y])\,&=\, \int_{(-\infty, a]}  \e_v \big[  V_{a}(Y_1, (x,y]) \ind_{\{Y_1 > a\}} \big] \nu(\dd v) 
	  .
	\end{split}
	\end{equation}
We begin with some estimates on $V_{a}(b, (x, y])$. Recall  \eqref{V(x)} for its definition. The first estimate will be useful for large $b$.
Using the fact that $J_k(\theta)\ge H_k+\theta$, we see that for any $x\ge 1$, $y\in[x, 2x]$, $\theta\ge 0$, 
\begin{equation}\label{eq:contrFlarge}
0 \le F(\theta, (x, y])\le R(y-\theta) \leq R(2x)= O_{\mathtt{u}}(x),
\end{equation}
and thus, using \eqref{Etau},
\begin{equation}\label{eq:contrVblarge}
V_a(b, (x, y]) = O_{\mathtt{u}}(x \e_b[\tau_\Theta (a)] )= O_{\mathtt{u}}(xb)\, .
\end{equation} 

Now let us focus on $b$ small, using Proposition \ref{P:F}. In fact we  claim that uniformly in $a < b \le x/4, y\in[x, 2x]$ \begin{equation}
 \label{Vx-asymp}
 V_a(b, (x, y])\,=\, 
     c_R (y-x) \e_b \left[\tau_\Theta(a)\right]  +  O_{\mathtt{u}}\left(b\,  x^{-(\kappa-2)}\right).
     \end{equation}

To show \eqref{Vx-asymp} we use, whenever $\Theta_{j}\le x/2$, the asymptotic development of $F(\Theta_{j}, (x, y])$ given by \eqref{F-asymp}. We have:
\begin{multline}
V_a(b, (x, y])
\, =\\
 \e_b \Bigg[  \sum_{j=0}^{\tau_\Theta(a)-1} \left( c_R (y-x) +  O_{\mathtt{u}}\left(\,  x^{-(\kappa-2)}\right) \right) \ind_{\{\Theta_{j} \le \frac{x}{2}\}}  \Bigg]+ \e_b  \Bigg[ \sum_{j=0}^{\tau_\Theta(a)-1} F(\Theta_{j}, (x,y]) \ind_{\{\Theta_j > \frac{x}{2}\}}  \Bigg] \\
\\ =\,  \e_b  \Bigg[  \sum_{j=0}^{\tau_\Theta(a)-1} \left( c_R (y-x) +  O_{\mathtt{u}}\left(\,  x^{-(\kappa-2)}\right) \right)  \Bigg]+ \e_b  \Bigg[  \sum_{j=0}^{\tau_\Theta(a)-1} O_{\mathtt{u}}(x) \ind_{\{\Theta_{j} > \frac{x}2 \}} \Bigg],
\end{multline}
where we used \eqref{eq:contrFlarge} for the second equality.
Using \eqref{Etau}, the first term is equal to $c_R (y-x)  \e_b \left[  \tau_\Theta(a)\right] +  O_{\mathtt{u}}\left( b \,  x^{-(\kappa-2)}\right)$, while \eqref{beta} yields that the second term is $O_{\mathtt{u}}(x e^{b-x/2})$, hence negligible (since $b\le x/4$). Thus, \eqref{Vx-asymp} is established.

\smallskip

Going back to the estimation of \eqref{nuax}, we use \eqref{Vx-asymp} whenever $a < Y_1 \le x/4$ and then \eqref{eq:contrVblarge} to obtain that for $x\ge 1$, uniformly in $y\in[x, 2x]$ and in $v\in (-\infty, a]$
 \begin{multline}  
 \e_{v}[V_a(Y_1, (x,y])\ind_{\{Y_1 > a\}}]  \,=\\
       \e_{v}\left[ \left( c_R(y-x)  \e_{Y_1}[  \tau_\Theta(a) ] +  O_{\mathtt{u}}\left( Y_1\,  x^{-(\kappa-2)}\right) \right)  \ind_{\{a < Y_1 \le x/4\}}\right]      +\e_{v}[  V_a(Y_1, (x,y])  \ind_{\{Y_1 > x/4\}}  ]  \\    
  = \,
       \e_{v}\left[ \left( c_R(y-x)  \e_{Y_1}[  \tau_\Theta(a) ] +  O_{\mathtt{u}}\left( Y_1\,  x^{-(\kappa-2)}\right) \right) \ind_{\{ Y_1 > a\}} \right]  +\e_{v}[  O_{\mathtt{u}}( Y_1 x)  \ind_{\{Y_1 > x/4\}}  ]  \\      
   = \,  c_R(y-x)  \e_{v}\left[ \e_{Y_1}[  \tau_\Theta(a) ] \ind_{\{ Y_1 > a\}} \right] +  O_{\mathtt{u}}\left( \e_{v}[Y_1\ind_{\{ Y_1 > a\}}] \,  x^{-(\kappa-2)}\right)   + O_{\mathtt{u}}\left( x \e_{v}[ Y_1   \ind_{\{Y_1 > x/4\}}] \right)\, . 
 \label{eq:C_a-bound} 
  \end{multline}
Finally let us argue that, uniformly over $v\in (-\infty,  a]$,  $\e_{v}[Y_1 \ind_{\{ Y_1 > a\}}]$ is bounded and $\e_{v}[ Y_1  \ind_{\{Y_1\ge x/4\}}]=O(x^{-\kappa})$. 
Using the assumption {\bf (T-1)}, $\e[(\logZ^+)^{\kappa+1}]< \infty$ (recall that $\kappa=\xi-1$) and using the fact that under $\p_v$, $Y_1= \logZ_1+ h(v) \le \logZ_1+h(a)$ if $v\le a$, we see that the $\kappa+1$ moment of $Y_1$ is bounded (by a constant that depends on $a$). Hence the claim.
 
 \smallskip

Integrating now over $(-\infty, a]$ against the measure $\nu$ yields that, as $x\to\infty$, uniformly in $y\in[x, 2x]$
\begin{equation} 
\nu((x, y])
\,=\,c_\nu (y-x)+ O_{\mathtt{u}}\left(x^{-(\kappa-2)}\right), \label{Upsilon1}  
\end{equation}
with 
\begin{equation} 
	    c_\nu\, =\,
	    c_R \, \int_{(-\infty, a]} \e_{v}\left[ \e_{Y_1}[  \tau_\Theta(a)   ] \ind_{\{ Y_1 > a\}} \right]  \nu(\dd v) \, . \label{eq:c_nu}
\end{equation}

\medskip

If we know that $\logZ_+$ is exponentially integrable, thus superseding {\bf (T-1)}, we get the claimed improved control on the error by following the same argument we have just developed. The crucial changes are:
\medskip

\begin{enumerate}[leftmargin=0.7 cm]
\item The  main theorem in \cite{cf:Doney80} yields that $\e [\exp( c \logZ^+)]< \infty$ implies 
$\e [\exp( c H_1)]< \infty$.
\item In \eqref{Vx-asymp}, the term  $O_{\mathtt{u}}\left(b\,  x^{-(\kappa-2)}\right)$ becomes $O_{\mathtt{u}}\left(e^{-\delta_6 x}\right)$, with some positive constant $\delta_6$. This can be done exactly in the same way, by applying the case {\bf(R-2$^\prime$)} of Proposition~\ref{P:F}.
\item The expectation $\e_{v}[ Y_1  \ind_{\{Y_1\ge x/4\}}]$ in \eqref{eq:C_a-bound} is now dominated uniformly over $v\in (-\infty, a]$ by $e^{-\delta_6 x}$, by eventually choosing a smaller constant $\delta_6$, instead of $x^{-\kappa}$.
\end{enumerate}
 \medskip

%
%
%
%
%
 
 The proof of  \cref{thm:stationary_asymptotics} is therefore complete.
\end{proof}

\begin{rem} The formula \eqref{eq:c_nu} provides an expression for $c_\nu$, the constant of the leading term in the right-hand-side of \eqref{eq:stationary_asymptotics}. We can express $d_\nu$, the other constant in \eqref{eq:stationary_asymptotics}, as follows: 
	\begin{equation}   d_\nu =  \nu((-\infty, a])+ \int_{(-\infty, a]}  \e_{v}\left[  \ind_{\{ Y_1 > a\}} \e_{Y_1} \Big(  \sum_{j=0}^{\tau_\Theta(a)-1} \psi(\Theta_{j}) \Big) \right]  \nu(\dd v)  ,
	  \end{equation} where $\psi(\theta):=c_R'-c_R  \e[\log \big(e^{ \theta} + \sum_{k=1}^\infty \widetilde\eta_k  \, e^{-   H_{k-1}}\big)]$, $c_R$ and $c'_R$ are given in \eqref{eq:cRc'R} and $ \widetilde \eta_j$ in \eqref{eq:defetatilde}. Note that ${d_{\nu}}/{c_{\nu}}$ is related to $\kappa_2$ through \eqref{eq:kappa2}. 
   \end{rem}
\section{Contraction properties: proofs}

In this section, we prove \cref{thm:contractive}, that is, we show that the dynamical system defined by $T_\Gamma$ is contractive with a suitably explicit rate.
The techniques we use are lightly adapted from a recent and so far unpublished work by Quentin Moulard \cite{cf:QM}.

Let $\cM_+$ be the set of two-by-two matrices with positive elements and positive determinant, i.e.\ of
\begin{equation}
	A =
	\begin{pmatrix}
		a & b
		\\
		c & d
	\end{pmatrix}
	,
	\label{eq:generic_M}
\end{equation}
with $a,b,c,d,ad-bc>0$.  This includes matrices of the form in \cref{eq:keyM}, so long as $\varepsilon \in (0,1)$, and importantly also arbitrary products of such matrices.
With the components labelled as in \cref{eq:generic_M},
\begin{equation}
	A 
	\begin{pmatrix}
		1 \\ e^x
	\end{pmatrix}
	=
	\begin{pmatrix}
		a + b e^x
		\\
		c + d e^x
	\end{pmatrix}
	=
	(a+b e^x)
	\begin{pmatrix}
		1 \\ e^{\cT_A(x)}
	\end{pmatrix}
	\textup{ with }
	\cT_A(x) 
	:=
	\log\left( \frac{c+d e^x}{a + b e^x} \right),
	\label{eq:TA_def}
\end{equation}
then
\begin{equation}
	\cT'_A(x)
	=
	\frac{ad - bc}{(a e^{-x} + b)(c + d e^{x})}
	\quad \implies \quad
	0 <
	\cT'_A(x)
	<
	\frac{\textrm{det}(A)}{A_{11}A_{22}}
	=:
	\tau(A)\,<\, 1\, ,
	\label{eq:cTA_deriv}
\end{equation}
for all $A \in \cM_+$, $x \in \R$; in particular $\cT_A$ is  increasing, and so invertible on $\cT_A(\R) = (\log \tfrac{c}{a},\log \tfrac{d}{b})$. 
We define for $G$ of bounded total variation 
\begin{equation}
	\cT_A G(x)
	:=
	\begin{cases}
		\lim_{y \to -\infty} G(y)
		, &
		x \le \log (c/a)
		\\
		G \circ \cT_A^{-1} (x)
		, &
		\log (c/a) < x < \log (d/b)
		\\
		\lim_{y \to \infty} G(y)
		, &
		x \ge \log (d/b)
	\end{cases}
	\label{eq:TA_G_action}
\end{equation}
($\lim_{y \to \pm \infty} G(y)$ exist because $G$ is the difference of two nonincreasing functions). Observe that if $G$ is the risk function of some variable $X$, then $\cT_A G=G_{\cT_A (X)}$.
Furthermore, note that when $M=M(e^{-\Gamma}, e^{-\mathtt{z}})$, we have $\cT_M (x) = \mathtt{z} + h_\Gamma (x)$, so $T_\Gamma$ introduced in \eqref{eq:TGdef}
can be expressed as follows: when $G$ is the risk function of some variable $X$, we have
\begin{equation}
	T_{\Gamma} G(x) 
	\,=\,
	\bbE[\cT_M G(x)]\,
	,
	\label{eq:TG_matrix}
\end{equation}
and this extends to the case when $G$ is the difference of the risk functions of two random variables. 
It is obvious from \cref{eq:TA_def} that $\cT_A \circ \cT_B = \cT_{AB}$, as a result of which
\begin{equation}
	(T_\Gamma)^n G(x)
	=
	\bbE\left[ \cT_{M_1 \dots M_n} G(x) \right].
	\label{eq:mat_G_power_n}
\end{equation}
\medskip

\begin{lemma}
Let $G_j$ be the risk function of $X_j \in \bbL^1$, $j=1,2$. Then for $A \in \cM_+$
we have that $\cT_A G_j$, $j=1,2$, are the risk functions of two $\bbL^1$ random variables, in fact $\bbL^\infty$, and 
\begin{equation}
		\| \cT_A G_1-  \cT_A G_2 \|_1
		\le
		\tau(A) \| G_1-G_2\|_1
		.
		\label{eq:tau_A_bound'}
	\end{equation}
	\label{lem:tau_A_bound}
\end{lemma}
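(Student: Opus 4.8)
The plan is to reduce the statement to the elementary change of variables hidden in the definition of $\cT_A$, so the proof will be short. First I would record the structural facts already contained in \eqref{eq:cTA_deriv} and \eqref{eq:TA_G_action}: $\cT_A$ is a $C^1$ increasing bijection from $\R$ onto the bounded open interval $(\log(c/a),\log(d/b))$, with $0<\cT_A'(y)<\tau(A)<1$ for every $y$. From this I deduce $\cT_A G_X = G_{\cT_A(X)}$ whenever $G_X$ is the risk function of a random variable $X$: indeed $\cT_A(X)$ takes values in $(\log(c/a),\log(d/b))$, so $\bbP(\cT_A(X)>x)=1$ for $x\le\log(c/a)$ and $=0$ for $x\ge\log(d/b)$, while for $x$ in between $\{\cT_A(X)>x\}=\{X>\cT_A^{-1}(x)\}$; matching this with \eqref{eq:TA_G_action} (and using $G_X(-\infty)=1$, $G_X(\infty)=0$) gives the identity. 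Since $\cT_A(X_j)$ is a bounded random variable it lies in $\bbL^\infty$, hence in $\bbL^1$, which is the first assertion of the lemma.

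For the contraction bound, put $g:=G_1-G_2$. Because $G_1$ and $G_2$ are risk functions of genuine random variables, $g(-\infty)=g(\infty)=0$, so applying the piecewise prescription \eqref{eq:TA_G_action} to $G_1$ and to $G_2$ separately and subtracting, the two constant pieces cancel and we get $\cT_A G_1-\cT_A G_2 = \cT_A g$, a function supported on $(\log(c/a),\log(d/b))$ and equal there to $g\circ\cT_A^{-1}$; in particular its $\bbL^1$ norm is finite, the integrand being bounded by $2$ on a bounded interval. Then I would compute
\begin{equation}
\| \cT_A G_1 - \cT_A G_2 \|_1 \,=\, \int_{\log(c/a)}^{\log(d/b)} \bigl| g(\cT_A^{-1}(x)) \bigr|\, \dd x \,=\, \int_\R |g(y)|\, \cT_A'(y)\, \dd y \,\le\, \tau(A)\, \|g\|_1 ,
\end{equation}
where the middle equality is the substitution $y=\cT_A^{-1}(x)$, $\dd x = \cT_A'(y)\,\dd y$ (legitimate since $\cT_A\in C^1$ is an increasing bijection onto its image), and the last inequality uses $\cT_A'(y)<\tau(A)$ from \eqref{eq:cTA_deriv}. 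This is exactly \eqref{eq:tau_A_bound'} (trivially true if $\|g\|_1=\infty$, and in any case $\|g\|_1<\infty$ when $X_1,X_2\in\bbL^1$, being the Wasserstein$-1$ distance between them).

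There is no substantial obstacle here; the only points needing care are bookkeeping rather than analysis. One is checking that the piecewise map \eqref{eq:TA_G_action} commutes with the subtraction $G_1-G_2$, which works precisely because the limiting values at $\pm\infty$ of the two risk functions agree, so the (otherwise $\bbL^1$-divergent) constant tails of $\cT_A G_1$ and $\cT_A G_2$ annihilate each other; the other is justifying the change of variables, which is immediate since $\cT_A$ is an absolutely continuous increasing bijection onto its image. Everything else is the derivative bound $\cT_A'(y)<\tau(A)$, already established in \eqref{eq:cTA_deriv}.
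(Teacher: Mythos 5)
Your proof is correct. The only point worth comparing is the route: the paper proves \eqref{eq:tau_A_bound'} by identifying $\|G_1-G_2\|_1$ with the Wasserstein distance $d_{W,1}(X_1,X_2)$ (and likewise after applying $\cT_A$), and then invoking the contraction of $d_{W,1}$ under the Lipschitz map $\cT_A$, whose Lipschitz constant is $\tau(A)$ by the derivative bound \eqref{eq:cTA_deriv}. You instead stay entirely at the level of the risk functions: you check that the constant tails of $\cT_A G_1$ and $\cT_A G_2$ cancel, so the difference is supported on the bounded interval $(\log(c/a),\log(d/b))$ where it equals $(G_1-G_2)\circ\cT_A^{-1}$, and then a monotone change of variables $y=\cT_A^{-1}(x)$ converts the $\bbL^1$ norm into $\int|G_1-G_2|\,\cT_A'\,\dd y\le\tau(A)\|G_1-G_2\|_1$. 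The two arguments rest on the same derivative estimate, but yours is more elementary and self-contained (no appeal to the Wasserstein identity or to the fact that $d_{W,1}$ contracts under Lipschitz maps), while the paper's phrasing makes the probabilistic meaning explicit and dovetails with the Wasserstein statements used elsewhere (e.g.\ \eqref{eq:W-1bound} and the proof of Corollary~\ref{th:bound-cor}). Your observation that $\cT_A(X_j)$ is bounded, hence $\bbL^\infty$, disposes of the first assertion exactly as in the paper.
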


\medskip

\begin{proof}
	We use a well-known identity: the $\bbL^1$ distance between the risk functions (or cumulative functions) of two real random variables is equal to the Wasserstein distance between the laws of these variables, 
	in our case: $\|G_1-G_2\|_1=d_{W, 1}(X_1, X_2)$, where with a slight abuse of notation, we write $d_{W, 1}(X_1, X_2)$ for the Wasserstein distance between the laws of $X_1$ and of $X_2$. 
	Similarly, $\|\cT_A G_1-\cT_A G_2\|_1=d_{W, 1}(\cT_A(X_1), \cT_A(X_2))$. 
	Hence, our aim is to prove that 
	\begin{equation}
	d_{W, 1}(\cT_A(X_1), \cT_A(X_2))\le \tau(A) d_{W, 1}(X_1, X_2).
	\end{equation} 
	This is immediate using the Taylor inequality
	\begin{equation}
	\forall x, y \in \R, \quad |\cT_A(x) - \cT_A(y)|\le \tau(A) |x-y|,
	\end{equation}
	which follows from \eqref{eq:cTA_deriv}.
\end{proof}

\medskip
Recall \eqref{eq:TG0_def} for $T_{\Gamma,0}G$. 
Combining the above lemma with \cref{eq:mat_G_power_n}, we have
\medskip

\begin{corollary}
	\label{lem:TN_by_tau}
	For any $n \in \bN$ and any $G$ the difference of the risk functions of two $\bbL^1$ random variables,
	\begin{equation}
		\| T_{\Gamma,0}^n G \|_1
		\le
		\bbE \left[ \tau(M_1 \dots M_n) \right] \|G\|_1 \, .
		\label{eq:Tn_by_tau}
	\end{equation}
\end{corollary}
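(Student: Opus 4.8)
The plan is to reduce the claim to \cref{lem:tau_A_bound} and the identity \eqref{eq:mat_G_power_n}, by interchanging the expectation over the random matrices with the spatial integral that defines $\|\cdot\|_1$. First I would fix a representation $G = G_1 - G_2$, with $G_j$ the risk function of some $X_j \in \bbL^1$. Such a $G$ satisfies $G(-\infty) = 1-1 = 0$, so by the very definitions \eqref{eq:GXT}--\eqref{eq:TG0_def} one has $T_\Gamma G = T_{\Gamma,0} G$. Moreover, by \eqref{eq:GXTn}, $T_\Gamma G_j$ is the risk function of $\logZ_1 + h_\Gamma(X_j)$, which belongs to $\bbL^1$ because $|h_\Gamma| \le \Gamma$; hence the class of differences of risk functions of $\bbL^1$ variables is stable under $T_{\Gamma,0}$, and consequently $T_{\Gamma,0}^n G = T_\Gamma^n G$ for all $n$. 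This allows me to apply \eqref{eq:mat_G_power_n} and write, for every $x \in \bbR$,
\begin{equation}
	T_{\Gamma,0}^n G(x) \,=\, \bbE\bigl[ \cT_{M_1\cdots M_n} G(x) \bigr]\,.
\end{equation}

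Next I would bound the $\bbL^1$ norm realization by realization before integrating. Since $\cM_+$ is closed under products, $A := M_1\cdots M_n \in \cM_+$, so $\cT_A$ and $\tau(A)$ of \eqref{eq:TA_def}--\eqref{eq:cTA_deriv} are well defined, and $\cT_A$ acts linearly (composition with the fixed increasing map $\cT_A^{-1}$, plus linear boundary values), whence $\cT_A G = \cT_A G_1 - \cT_A G_2$. \cref{lem:tau_A_bound} then gives, for each fixed value of the matrices,
\begin{equation}
	\| \cT_{M_1\cdots M_n} G \|_1 \,\le\, \tau(M_1\cdots M_n)\, \| G \|_1\,.
\end{equation}
Using $|\bbE[\,\cdot\,]| \le \bbE[|\,\cdot\,|]$ pointwise in $x$ and Tonelli's theorem to exchange the integral with the expectation of the nonnegative integrand $(x,\omega) \mapsto |\cT_{M_1(\omega)\cdots M_n(\omega)} G(x)|$, I obtain
\begin{equation}
	\| T_{\Gamma,0}^n G \|_1 \,=\, \int_\bbR \bigl| \bbE[\cT_{M_1\cdots M_n} G(x)] \bigr| \,\dd x \,\le\, \bbE\Bigl[ \| \cT_{M_1\cdots M_n} G \|_1 \Bigr] \,\le\, \bbE\bigl[ \tau(M_1\cdots M_n) \bigr]\, \| G \|_1\,,
\end{equation}
which is exactly \eqref{eq:Tn_by_tau}.

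The argument is essentially bookkeeping, and I do not expect a genuine obstacle. The only points needing a line of justification are: the stability of the class of differences of risk functions of $\bbL^1$ variables under $T_{\Gamma,0}$ (so that the identification $T_{\Gamma,0}^n = T_\Gamma^n$ is legitimate and \eqref{eq:mat_G_power_n} applies), and the joint measurability of $(x,\omega) \mapsto \cT_{M_1(\omega)\cdots M_n(\omega)} G(x)$ needed for Tonelli, which follows from the explicit formula \eqref{eq:TA_def} for $\cT_A$ together with the explicit dependence of each $M_j$ on $\logZ_j$. Finiteness of the pointwise expectation is automatic, since $\cT_A G$ is a difference of $[0,1]$-valued risk functions by \cref{lem:tau_A_bound}, hence $|\cT_A G(x)| \le 2$ for all $x$.
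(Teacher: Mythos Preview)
Your argument is correct and is exactly the approach the paper intends: it states the corollary as an immediate consequence of \cref{lem:tau_A_bound} together with \eqref{eq:mat_G_power_n}, and you have simply filled in the routine details (the identification $T_{\gG,0}^n=T_\gG^n$ on differences of $\bbL^1$ risk functions, and the use of Tonelli to pass the expectation outside the $\bbL^1$ norm).
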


\medskip

Consequently, we can relate \cref{thm:contractive} to a bound on $\sum_{n=0}^\infty\bbE[\tau(M_1 \dots M_n)]$.  We obtain such a bound as follows: set
\begin{equation}
n_{\gG, \gs}\, :=\, \left \lceil 37 \, \left( \gG/\gs \right)^2\right\rceil\, . 
\end{equation}

\medskip

\begin{lemma}
\label{th:tau_G2_bound}
	There exists $\Gamma_0 < \infty$ such that
	\begin{equation}
		\bbE \left[\tau\left(M_1 \dots M_{n_{\gG, \gs}}\right)\right]
		\,\le \,  \frac 12 \, ,
		\label{eq:tau_G2_bound}
	\end{equation}
	for all $\Gamma \ge \Gamma_0$.
	\label{lem:tau_G2_bound}
\end{lemma}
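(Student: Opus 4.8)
The plan is to compute $\tau(M_1\cdots M_{n})$ explicitly by means of the projective action introduced in \Cref{sec:Furstenberg}, to recognise the result as a functional of two coupled copies of the Markov chain $X$ of \eqref{eq:XMC}, and finally to invoke a confinement (diffusion) estimate for the centered random walk with increments $\logZ$, for which the numerical factor $37$ in $n_{\gG,\gs}$ is meant to provide the needed margin. This is the probabilistic counterpart of the deterministic argument of Q.~Moulard \cite{cf:QM}.

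\textit{Step 1: an explicit formula for $\tau$.} For $M=M(\gep,Z)$ one has $\cT_M(x)=\logZ+h_\gG(x)$ with $\gep=e^{-\gG}$ and $\logZ=\log Z$, cf.\ \eqref{eq:map}. Writing $X^{\pm}_j:=\cT_{M_{j+1}\cdots M_{n}}(\pm\infty)$ and applying the identity $A(1,e^x)^{\mathtt T}=(a+be^x)(1,e^{\cT_A(x)})^{\mathtt T}$ of \eqref{eq:TA_def} repeatedly, starting from $(1,0)^{\mathtt T}$ and from $(0,1)^{\mathtt T}$, one gets
$$
(M_1\cdots M_{n})_{11}=\prod_{j=1}^{n}\bigl(1+\gep\, e^{X^-_j}\bigr),
\qquad
(M_1\cdots M_{n})_{22}=\Bigl(\prod_{j=1}^{n}Z_j\Bigr)\prod_{j=1}^{n}\bigl(1+\gep\, e^{-X^+_j}\bigr),
$$
while $\det(M_1\cdots M_{n})=(1-\gep^2)^{n}\prod_{j=1}^{n}Z_j$. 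Since every factor above is $\ge1$ and $(1-\gep^2)^{n}\le1$,
$$
\tau(M_1\cdots M_{n})=\frac{(1-\gep^2)^{n}}{\prod_{j=1}^{n}\bigl(1+e^{X^-_j-\gG}\bigr)\bigl(1+e^{-X^+_j-\gG}\bigr)}
\,\le\,\prod_{j=1}^{n}\bigl(1+e^{X^-_j-\gG}\bigr)^{-1}\bigl(1+e^{-X^+_j-\gG}\bigr)^{-1}.
$$

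\textit{Step 2: reduction to the chain $X$.} Because $(\logZ_j)_{j=1}^{n}$ is i.i.d.\ and $\cT_{M_i}=\logZ_i+h_\gG(\cdot)$, the reversed sequences $(X^-_{n-k})_{k=0}^{n}$ and $(X^+_{n-k})_{k=0}^{n}$ form, jointly, two copies of the chain \eqref{eq:XMC} driven by the same noise and issued from $-\infty$ and $+\infty$ respectively; since $h_\gG$ is increasing, the first one stays below the second. Taking $\bbE$ in the bound above, it thus suffices to bound by $1/2$ the expectation of $\prod_{k=1}^{n_{\gG,\gs}}(1+e^{X_k-\gG})^{-1}(1+e^{-X'_k-\gG})^{-1}$, where $X$ starts at $-\infty$, $X'$ starts at $+\infty$, and $X_k\le X'_k$ for all $k$.

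\textit{Step 3: a confinement estimate, and the main obstacle.} The factor at a step $k$ is $\ge 1+e^{-O_{\mathtt u}(1)}>1$ whenever $X_k$ (respectively $X'_k$) lies within $O_{\mathtt u}(1)$ of the right edge $\gG$ (resp.\ the left edge $-\gG$), and is $\approx 1$ otherwise; consequently the product exceeds $2$ — whence $\tau(M_1\cdots M_{n})<\tfrac12$ — as soon as the chains spend more than a fixed number $C$ of steps near the edges. On the other hand, while a trajectory stays in $[-\gG+1,\gG-1]$ it coincides, up to an $O_{\mathtt u}(1)$ correction, with the centered random walk $S$ with increments $\logZ$ (by \eqref{eq:almost-id}); and a classical estimate says that the probability that $S$ stays, for $n_{\gG,\gs}=\ceil{37(\gG/\gs)^2}$ steps, inside an interval of width $\asymp\gG$ decays like $\exp(-c\,\gs^2\gG^{-2}\,n_{\gG,\gs})=\exp(-37\,c')$, which can be made as small as desired through the choice of the constant $37$, uniformly in $\gG\ge\gG_0$. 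Hence, with probability $1-o(1)$ as $\gG\to\infty$, both chains range over essentially all of $(-\gG,\gG)$ and, in particular, accumulate $\gg1$ edge-steps, so that $\tau(M_1\cdots M_{n})<\tfrac12-\gd_0$ on this event for some fixed $\gd_0>0$; on the complementary event one uses only $\tau\le1$, and picking $37$ so that the complementary probability is below $\gd_0$ yields \eqref{eq:tau_G2_bound}. The hard part is precisely this last step: under the sole hypothesis $\gs^2<\infty$ the chain need not overshoot $\pm\gG$, so one cannot rely on a single near-edge visit but must control the number of returns to the edge regions (equivalently, a local-time functional of the confined walk) and track the constants closely enough to certify that $37$ suffices — this is the quantitative input supplied by the analysis in \cite{cf:QM}.
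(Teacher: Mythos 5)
Your Steps 1--2 are fine: the trajectory formula for $(M_1\cdots M_n)_{11}$, $(M_1\cdots M_n)_{22}$ and the determinant is correct, and it does reduce the problem to bounding $\bbE\bigl[\prod_{j}(1+e^{X^-_j-\gG})^{-1}(1+e^{-X^+_j-\gG})^{-1}\bigr]$. The genuine gap is Step 3, and you in fact concede it yourself. Two things are missing and neither is routine: (i) the inference from ``the centered walk does not stay confined in a window of width $\asymp\gG$ for $n_{\gG,\gs}$ steps'' to ``the chains accumulate $\gg 1$ near-edge steps'' is unjustified --- a single excursion to the edge contributes only a bounded factor $1+e^{-O_{\mathtt u}(1)}$, so to force the product above $2$ you need a local-time/number-of-visits estimate near $\pm\gG$, which you never carry out; (ii) the final tuning is circular: you ``pick $37$ so that the complementary probability is below $\gd_0$'' with $\gd_0$ unspecified, and under the sole hypothesis $\gs^2<\infty$ the confinement estimate you invoke holds only with a non-explicit constant, so nothing in the argument certifies that the specific value $37$ in $n_{\gG,\gs}$ suffices. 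Deferring exactly this quantitative core to \cite{cf:QM} means the lemma is not proved.

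For comparison, the paper's proof avoids the whole edge-occupation analysis. After normalizing to the matrices $Q_j$ of \eqref{eq:Qj_def} (using $\tau(\lambda A)=\tau(A)$), it bounds $\tilde Q^{(n)}_{11}\tilde Q^{(n)}_{22}$ from below by three single products of entries, which yields the purely algebraic estimate $\tau(M_1\cdots M_n)\le\min\bigl(1,\,e^{2\gG}e^{-|\sum_{j=2}^n \logZ_j|}\bigr)$, i.e.\ it only needs the random walk to \emph{exceed} level $3\gG$ once by time $n$, not to visit the edges repeatedly. Then a single application of the CLT at the one time $n_{\gG,\gs}$, where $37$ enters only through $3\gG/(\gs\sqrt{n_{\gG,\gs}})\le 3/\sqrt{37}<1/2$, gives $\bbP(|\sum_{j=2}^{n_{\gG,\gs}}\logZ_j|\ge 3\gG)\ge\bbP(|\cN|\ge 1/2)>3/5$ for $\gG$ large, whence $\bbE[\tau(M_1\cdots M_{n_{\gG,\gs}})]\le 2/5+e^{-\gG}\le 1/2$. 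If you want to salvage your route, note that your own formula already implies the paper's bound (keep only the factor $j$ minimizing/maximizing $X^\pm_j$), so the cleanest fix is to replace your Step 3 by this exit-probability argument rather than a local-time one.
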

\medskip

\begin{proof}
	Note that $\tau(A)=\tau(\lambda A)$ for any $A \in \cM_+,\ \lambda >0$, so instead of $M$ in \eqref{eq:keyM} we can equivalently consider
	\begin{equation}
		Q_j
		:=
		\begin{pmatrix}
			e^{-\mathtt{z}_j/2}
			&
			e^{-\Gamma - \mathtt z_j / 2}
			\\
			e^{-\Gamma + \mathtt z_j / 2}
			&
			e^{\mathtt{z}_j/2}
		\end{pmatrix}
		.
		\label{eq:Qj_def}
	\end{equation}
	Abbreviating $\tilde Q^{(n)} = Q_1 \dots Q_n$, we have
	\begin{equation}
		\tau(M_1 \dots M_n)
		\,=\,
		\tau\left(\tilde Q^{(n)}\right)
		\,\le\, 
		1\Big/
		\left[ \tilde Q^{(n)}_{11} \tilde Q^{(n)}_{22} \right]
		.
		\label{eq:tau_Q}
	\end{equation}
	Since all of the matrices involved are positive, we can bound the matrix products involved from below by single products of elements; most obviously
	\begin{equation}
		\tilde Q^{(n)}_{11}
		\tilde Q^{(n)}_{22}
		\ge 
		(Q_1)_{11} \dots (Q_n)_{11} \times (Q_1)_{22} \dots(Q_n)_{22}
		=
		1
		,
	\end{equation}
	but also
	\begin{equation}
		\tilde Q^{(n)}_{11}
		\tilde Q^{(n)}_{22}
		\ge 
		(Q_1)_{11} \dots (Q_n)_{11}
		\times
		(Q_1)_{21} (Q_2)_{11} \dots (Q_{n-1})_{11}(Q_{n})_{12}
		=
		e^{-2\Gamma} e^{- \sum_{j=2}^n \mathtt z_j}
	\end{equation}
	and
	\begin{equation}
		\tilde Q^{(n)}_{11}
		\tilde Q^{(n)}_{22}
		\ge 
		(Q_1)_{12} (Q_2)_{22} \dots (Q_{n-1})_{22}(Q_{n})_{21}
		\times
		(Q_1)_{22} \dots (Q_n)_{22}
		=
		e^{-2\Gamma} e^{\sum_{j=2}^n \mathtt z_j}
		.
	\end{equation}
	Assembling these three estimates and combining them with \cref{eq:tau_Q},
	\begin{equation}
		\tau(M_1 \dots M_n)
		\le
		\min\left( 
			1,
			e^{2 \Gamma} e^{\sum_{j=2}^n \mathtt z_j}
			,
			e^{2 \Gamma} e^{-\sum_{j=2}^n \mathtt z_j}
		\right)
		=
		\min \left( 
			1,
			e^{2 \Gamma} e^{- |\sum_{j=2}^n \mathtt z_j|}
		\right)
		.
		\label{eq:tau_Q_usable}
	\end{equation}

	Since the $\mathtt z_j\in \bbL^2$ are centered and IID, we can use the Central Limit Theorem together with 
	$3\gG /(\gs \sqrt{n_{\gG, \gs} })\le 1/2$ for $\gG$ sufficiently large
	 to see that with $\cN$ denoting a standard Gaussian variable
	\begin{equation}
		\bbP \left( \bigg |\sum_{j=2}^{n_{\gG, \gs}} \mathtt z_j \bigg| \,\ge\, 3 \,\gG \right)\,
		\ge \, \bbP \left(\vert \cN \vert \ge 1/2\right)\,=:\, \gd\,> \, \frac 35\,.
		\label{eq:z_sum_CLT}
	\end{equation}
Combining this with \cref{eq:tau_Q} we see that by suitably choosing $\gG_0$ we have that for every $\gG\ge \gG_0$
	\begin{equation}
		{\bbE\left[ \tau(M_1 \dots M_{n_{\gG, \gs}} \right]}\,
		\le\, 
		(1-\delta)
		+ \delta e^{-\Gamma}\,
		\le\,
		1 - \delta (1 - e^{-\Gamma_0})
		\, \le \, \frac 12
		.
	\end{equation}
\end{proof}

\begin{proof}[Proof of \cref{thm:contractive}]
Combining this with \cref{lem:TN_by_tau}
and 
Lemma~\ref{th:tau_G2_bound}
we obtain
\begin{equation}
		\left\| T_{\Gamma,0}^{n_{\gG, \gs}} G \right\|_1\,
		\le\,
\frac 12\, \|G\|_1\,.
	\end{equation}
Writing $n = n_{\gG, \gs} m + r$ with  $r = n - n_{\gG, \gs}m \in [0, n_{\gG, \gs})$ we have
	\begin{equation}
		T_{\Gamma_0}^n G
		\, =\,
		T_{\Gamma,0}^{n_{\gG, \gs}} \left[ T_\Gamma^{n_{\gG, \gs}}  \dots\left[ T_\Gamma^r G \right] \right];
	\end{equation}
	noting that $\| T_{\Gamma,0}^r G \|_1 \le \| G \|_1$ (as can be seen from either \cref{eq:Tbound0} or \cref{eq:tau_Q_usable}),
	applying \cref{lem:TN_by_tau,lem:tau_G2_bound} iteratively leads to
	\begin{equation}
		\left\| T_{\Gamma,0}^n G \right\|_1
		\,
		\le\,
		\left(\frac 12 \right)^m \| G \|_1\, ,
	\end{equation}
	so that
	\begin{equation}
		\sum_{n=0}^\infty 
		\left\| T_{\Gamma,0}^n G \right\|_1
		\,\le\, n_{\gG, \gs}
		 \sum_{m=0}^\infty \left(\frac 12 \right)^m
		=
		\, 2 \, n_{\gG, \gs}\, \le \, \frac{75}{\gs^2}\, {\gG^2},
	\end{equation}	
	which completes the proof of \cref{thm:contractive}.
\end{proof}

\section{The one-step bound}
\label{sec:onestep}

This section is devoted to the proof of  \Cref{thm:onestep}. 

\begin{proof}
We start by observing that \eqref{eq:Ck} is a direct consequence of \eqref{eq:asymptlar}. Moreover
\eqref{eq:Lyap} follows by the following simple calculation: by \eqref{eq:L_k} 
 and \eqref{eq:DHprobability} we have
\begin{equation}
\label{eq:for-Lyap-1}
L_\gG [G_\gG ]\,=\, \frac 1{C_\gG }\int_0^\infty \frac{F_\rar(\gG -x)}{1+e^{\gG -x}} \dd x+
\int_{-\infty}^0 \frac{G_\gG (x)} {1+e^{\gG -x}} \dd x\, ,
\end{equation}
where for notational brevity, we will write in this proof $G_\gG$ in lieu of $G_{\gamma_\gG}$, and analogous notation for the cumulative function $F_{\gamma_\Gamma}$.  Recall that these were introduced in \cref{eq:DHprobability,eq:cumFk} above. 
Using $G_\gG (x)\in [0, 1]$ we readily see that the second addendum is smaller than $\exp(-\gG )$.
Moreover
\begin{equation}
\int_0^\infty \frac{F_\rar(\gG -x)}{1+e^{\gG -x}} \dd x\, =\, 
\int_{\bbR} \frac{F_\rar(y)}{1+e^{y}} \dd y - \int_\gG ^\infty \frac{F_\rar(y)}{1+e^{y}} \dd y\, =\, 
\int_{\bbR} \frac{F_\rar(y)}{1+e^{y}} \dd y +O\left(\gG  \exp(-\gG )\right)
\, ,
\end{equation}
where in the last step we used \eqref{eq:asymptlar} for $F_\rar$ instead of $F_\lar$.
\medskip

\begin{rem} \label{rem:sym3}
Repeating the same argument using instead the functional $L_\gG^\star [\, \cdot \,]$  defined in \eqref{eq:L_k2}  of Remark~\ref{rem:sym2} one obtains
\begin{equation}
		\label{eq:Lyap2}
		L^\star_\gG [G_{\gamma_\gG} ]\,=\,  \frac 1{C_\gG }\int_\bbR \frac{F_\lar(y)}{1+e^y} \dd y+ O( \exp(-\gG ))\,.
	\end{equation} 
Then, similarly to \eqref{eq:kappa_1_formula-0}, we arrive at another formula for $\kappa_1$: \begin{equation}
	\kappa_1
	=
	\frac12
	\int_\bbR \frac{F_\lar(y)}{1+e^{y}} \dd y
	= \frac 12
	\int \log (1 + e^{-y}) \nu(\dd y)
	\, ,
	\label{eq:kappa_1_formula}
\end{equation}
where the second equality follows from an integration by parts, and we recall that $F_\lar$ is the cumulative function of the (normalized) stationary measure $\nu$ of $Y$. This highlights a symmetry with respect to exchanging  $F_\lar$ and  $F_\rar$.
\end{rem}

\medskip

Now for the proof of \eqref{eq:onestep}.
We refer to assumption  ``{\bf (T-1)} for $\pm \logZ$'' as ``$\logZ\in \bbL^\xi$'' and to ``{\bf (T-1$^\prime$)} for $\pm \logZ$''
as ``exponential tails assumption'' and, when the assumption is omitted, $\logZ\in \bbL^\xi$ is assumed. We have
\begin{equation}
\label{eq:2addenda-2}
\begin{split}
\left\Vert T_\gG G_\gG  - G_\gG  \right \Vert_1\, &=\, 
\left\Vert (T_\gG G_\gG  - G_\gG  )\ind_{(-\infty,0)}\right \Vert_1 + \left\Vert (T_\gG G_\gG  - G_\gG  )\ind_{(0, \infty)}\right \Vert_1
\\
&=\,
\left\Vert (T_\gG F_\gG  - F_\gG  )\ind_{(-\infty,0)}\right \Vert_1 + \left\Vert (T_\gG G_\gG  - G_\gG  )\ind_{(0, \infty)}\right \Vert_1\, ,
\end{split}
\end{equation}

\noindent where {$G_\Gamma := G_{\gamma_\Gamma}$ and $F_\Gamma := G_{\gamma_\Gamma}$ are defined in \eqref{eq:DHprobability} and \eqref{eq:cumFk} respectively.}
If $\zeta$ is symmetric the two terms coincide; in general, they do not coincide but they can be treated 
in the same way because one can be obtained from the other by replacing $\logZ$ with $-\logZ$.  
So we focus on the first one 
and use  \eqref{eq:FXT} to obtain
\begin{equation}
\begin{split}
\left\Vert (T_\gG F_\gG  - F_\gG  )\ind_{(-\infty,0)}\right \Vert_1\, & =\, \left\Vert (F_\logZ (-\gG+\cdot)+T_{\gG,0} F_\gG  - F_\gG  )\ind_{(-\infty,0)}\right \Vert_1
\\
& \le \, \int_{-\infty}^{-\gG} F_\logZ (x)\dd x +\left\Vert \left(T_{\gG,0} F_\gG  - F_\gG  \right)\ind_{(-\infty,0)}\right \Vert_1\,.
\end{split}
\end{equation}
Note that  the first addendum in the right-most term is $O(\gG^{-\xi+1})$  when $\logZ\in \bbL^\xi$ (and $O(\exp(-c \gG))$ assuming exponential tails). So we focus on the second one, which is equal to
\begin{equation}
	A_{\gG ,1}\, :=\,\int_{-\infty}^0 \left \vert  \int \ind_{(x-\gG ,x+\gG )}(z) F_\gG(h_\gG ^{-1}(x-z))  \gz( \dd z) - F_\gG(x) \right\vert  \dd x\, . 
\end{equation}

The first step in controlling $A_{\gG ,1}$ is to remark that we can avoid the nuisance of the fact that 
$F_\gG (y)$ has two different expressions according to the sign of $y$. 
Namely, we want to relate it to
\begin{equation}
A_{\gG ,2}\, :=\,\frac 1{C_\gG }\int_{-\infty}^0 \left \vert  \int\ind_{(x-\gG ,x+\gG )}(z) F_\lar(h_\gG ^{-1}(x-z)+\gG )  \gz( \dd z) - F_\lar(x+\gG ) \right\vert  \dd x\, .
\end{equation}
We can do this because 
\begin{equation}
\left\vert A_{\gG ,2}-A_{\gG ,1}\right\vert\, \le \,
\int_{-\infty}^0    \int
\left. \ind_{(x-\gG ,x)}(z) \left \vert \frac{
F_\lar(\gG +y) + F_\rar(\gG -y)}{C_\gG }-1 \right \vert
\right|_{y = h_\gG ^{-1}(x-z)}
\gz(\dd z) \dd x\, ,
\label{eq:A2-A1}
\end{equation}
using the observation that $h_\gG ^{-1}(0)=0$, $h_\gG ^{-1} (\gG ^-)=+\infty$, so $\ind_{(x-\gG ,x+\gG )}(z) \ind_{(0,\infty)}(h^{-1}_\gG (x-z)) = \ind_{(x-\gG ,x)}(z)$.
Furthermore by \cref{eq:asymptlar,eq:Ck}  we have
\begin{equation}
\left \vert \frac{F_\lar(\gG +y) + F_\rar(\gG -y)}{C_\gG }-1 \right \vert \,\le\, C \times  \begin{cases}
	\gG ^{-\xi+2} 
	& \text{ if } y\in [0, \gG /2] \, ,
\\
y/\gG  & \text{ if } y >\gG  /2\, ,
\end{cases}
\end{equation}
for suitably chosen $C>0$ and for $\gG $ sufficiently large (in the exponential case the first line of the estimate may be replaced by 
$\exp(-\gd \gG/2)/\gG$). 
Combining this with \cref{eq:A2-A1}, we see that
 $\vert A_{\gG ,2}-A_{\gG ,1}\vert$ is bounded above by a constant times 
\begin{equation}
	\label{eq:interm2.k}
	\begin{split}
		&
		\gG ^{-\xi+2}
		\int_{-\infty}^0 \int \ind_{(x-\gG ,x)}(z) \zeta(\dd z) \dd x
		+
		\gG^{-1} 
		\int_{-\infty}^0 \int \ind_{(x-\gG ,x-h_\gG (\gG /2))}(z) h^{-1}_\gG  (x-z) \zeta(\dd z) \dd x
		\\ &
		\le
		\gG ^{-\xi+2}
		\int_{-\infty}^0 F_\zeta(x) \dd x
		+
		\gG^{-1} 
		F_\zeta(-h_\gG (\gG /2)) 
		\int_\bbR \vert y\vert  h'_\gG (y) \dd y,
	\end{split}
\end{equation}
where the second addendum is treated 
by a change of variable of integration from $x$ to $y = h_\gG ^{-1}(x-z)$, taking the absolute value of the integrand and by releasing the integration constraints on $y$.
Note that in the exponential case the prefactors $\gG^{-\xi+2}$ in \eqref{eq:interm2.k} may be replaced by 
 $\exp(-\gd \gG/2) / \gG$.
The assumption on $\logZ\in \bbL^\xi$  
together with the Markov inequality imply $F_\zeta (x) = O(|x|^{-\xi})$ for $x \to -\infty$
 ($F_\zeta (x) = 
 O(\exp(-c\vert x\vert))
 $ in the exponential case). Since $h_\gG (\gG /2) \sim \gG /2$, $h'_\gG (y)<1$ and the fact that $h'_\gG(y)$ decays exponentially for $y >\gG $ (so $\int \vert y\vert  h'_\gG (y) \dd y=O(\gG ^2)$), we have $\vert A_{\gG ,2}-A_{\gG ,1}\vert = O(\gG^{-\xi+2})+
O(\gG^{-\xi+1})$ (in the exponential case: $ O(\exp(-\gd \gG/2) / \gG)+
O(\gG \exp(- c\gG/2)  )$), so
\begin{equation}
	\left\vert A_{\gG ,2}-A_{\gG ,1}\right\vert\, 
	=\begin{cases}
	O\left( \gG ^{-\xi+2} \right) & \text{ if } \logZ \in \bbL^\xi, \\
	O(\gG \exp(-(\gd \wedge c) \gG/2)) & \text{ under exponential tails assumption}.
	\end{cases}
\end{equation}
 
Having shown this, we now need to control $A_{\gG ,2}$.  To do this we introduce
\begin{equation}
\hup _\gG (x)\, :=\, h(x+\gG )-\gG \, 
\end{equation}
which approximates $h_\Gamma$ for a certain range of arguments in the sense that
 \begin{equation}
\label{eq:ident-before-asympt1}
0 \, \le \, \hup _\gG (x)- h_\gG (x)\, =\, \log(1+ \exp(x-\gG ))\,= \begin{cases}
O(\exp(x-\gG ))& \text{ for } x-\gG \to -\infty\, , \\
O(x-\gG ) & \text{ for } x-\gG \to +\infty\, ,
\end{cases}
\end{equation}
\begin{equation}
\label{eq:ident-before-asympt2}
0 \, \le \, \hup '_\gG (x)- h'_\gG (x)\, =\, 1/(1+ \exp(\gG -x))\, = \begin{cases}
O(\exp(x-\gG ))& \text{ for } x-\gG \to -\infty\, , \\
\le 1 & \text{ for every } x \text{ and } \gG \, ,
\end{cases}
\end{equation}
and
\begin{equation}
\label{eq:hgG-1b}
0 \le h_\gG ^{-1}(y)-  \hup _\gG^{-1}(y)\,=\, - \log \left( 1- e^{y-\gG}\right)\, \qquad \forall\, y\in (-\Gamma, \Gamma).
\end{equation}
Moreover we point out that $h_\gG '(x)$ and $\hup '_\gG (x)$ are in $(0,1)$ for every $x$ and, as shown in Figure~\ref{fig:1}, the two functions almost coincide for $x\ll \gG $. They start to differ when $x$ approaches $\gG $ from the left, where  $\hup '_\gG (x)$ keeps being very close to one while $\hup _\gG (x)\sim x$. 
 
Using the characterization of $F_\lar(\cdot)$ as the cumulative function of a stationary measure,
\begin{multline}
	\label{eq:passo-interm1}
	\int\ind_{(-\infty,x+\gG )}(z) F_\lar(\hup _\gG ^{-1}(x-z)+\gG ) \gz(\dd z) - F_\lar(x+\gG )\,=\\
	\int \ind_{(-\infty,x)}(z) F_\lar(h^{-1}(x-z)) \gz(\dd z) - F_\lar(x)\, =\,0 ,
\end{multline}
and so
\begin{multline}
		A_{\gG ,2}\, 
		\,=
		\\ 
		\frac 1{C_\gG } 
		\int_{-\infty}^0  \left\vert 
		\int \left[
			\ind_{(x-\gG ,x+\gG )}(z) F_\lar(h_\gG ^{-1}(x-z)+ \gG) 
			-
			\ind_{(-\infty,x+\gG )}(z) F_\lar(\hup _\gG ^{-1}(x-z)+ \gG) 
		\right] \zeta(\dd z)
		\right\vert \dd x
		\\ 
		\le 
		\frac 1{C_\gG } 
		\int_{-\infty}^0 \int
		\bigg\{ 
			\ind_{(-\infty,x-\gG/2 ]}(z) F_{\lar}(\hup _\gG ^{-1}(x-z)+ \gG)
			+ \ind _{(x-\gG, x-\gG/2]}(z)  F_\lar(h_\gG ^{-1}(x-z)+ \gG) \\
			+\ind_{(x-\gG/2 ,x+\gG )}(z) \Big[ F_\lar(h_\gG ^{-1}(x-z)+ \gG)-  F_\lar(\hup _\gG ^{-1}(x-z)+ \gG) \Big]
		\bigg\} \zeta(\dd z) \dd x
		\\ =:\, T_1+T_2+T_3\,
		.
	\label{eq:Ak2_main}
\end{multline}
For $T_1$
we first note that, since $\hup ^{-1}_\gG (x') \sim x'$ and $F_\lar(x')\sim x'$ for  $x' \to +\infty$, we have for $\gG$ large
\begin{equation}
	\label{eq:T1_prelim}
	\begin{split}
		\ind_{(-\infty,x-\gG/2 ]}(z) F_{\lar}(\hup _\gG ^{-1}(x-z)+\gG)
		\,&=\,
		\ind_{[\gG/2 ,\infty)}(x-z) F_\lar(\hup _\gG ^{-1}(x-z)+\gG)
		\\
		&\le\, 
		2 (x-z+ \gG) \ind_{[\gG/2 ,\infty)}(x-z)
		,
	\end{split}
\end{equation}
and integrating by parts (keep in mind that $x<0$)
\begin{equation}
	\int (x-z) \ind_{[\gG/2 ,\infty)}(x-z) \zeta (\dd z)
	=
	\frac \gG 2 F_\zeta(x-\gG )
	+
	\int_{-\infty}^{x-\gG/2 }
	F_\zeta (z) \dd z
	=
	O\left( |x-\gG |^{-\xi+1} \right)\, ,
	\label{eq:T1_intermediate}
\end{equation}
where we have used  that $\logZ\in \bbL^{\xi}$; in the exponential case the estimate is $O(\exp(-c \vert x-\gG\vert/2)$.
	Regarding the remaining term in \cref{eq:T1_prelim}, note that
\begin{equation}
 \gG \int \ind_{[\gG/2 ,\infty)}(x-z) \zeta (\dd z)\, =\, \gG  F_\zeta(x-\gG/2 )\, ,
\end{equation}
and we can bound this by the Markov inequality, combining with \cref{eq:T1_intermediate} to give
\begin{equation}
	\int (x-z+ \gG) \ind_{[\gG/2 ,\infty)}(x-z) \zeta (\dd z)\, =\, O\left( |x-\gG |^{-\xi +1} \right)\, ,
	\end{equation}
for $\logZ\in \bbL^\xi$
	and  $O(\vert x-\gG\vert\exp(-c \vert x-\gG\vert/2))$ in the exponential case.
Therefore integrating in $x$ and dividing by $C_\gG $ we obtain $T_1=O(\gG ^{-\xi +1})$ and $O(\exp(-c \gG/3))$ in the exponential case.
\medskip 

For $T_2$ we use $F_\lar(x')\sim x'$ for $x'\to \infty$ (note that $x-z \in [\gG/2, \gG)$) so
\begin{equation}
  F_\lar(h_\gG ^{-1}(x-z)+ \gG) \, \le \, 2 (h_\gG ^{-1}(x-z)+ \gG)\, .
\end{equation}
Moreover
\begin{equation}
\ind_{(-\infty, 0)}(x)
 \ind _{(x-\gG, x-\gG/2]}(z)\, =\, 
 \ind_{(-\infty, -\gG]}(z)
 \ind _{(z+\gG/2, z+\gG]}(x)+  \ind_{(-\gG, -\gG/2)}(z)
  \ind _{(z+\gG/2,0]}(x),
\end{equation}
and we remark that
\begin{equation}
\int_{z+\frac \gG 2}^{z+\gG } (h_\gG ^{-1}(x-z)+ \gG) \dd x\, =\,
\int_{\frac \gG 2}^{\gG } \left(h_\gG ^{-1}(x)+ \gG\right) \dd x\, =\, O\left( \gG^2 \right),
\end{equation}
because when $x\nearrow \gG$ we have $ h_\gG ^{-1}(x)-\gG \sim \log (\gG-x)$ and the integral converges.
For the other term we have by using that $-z\in (-\gG/2, \gG)$ and the behavior of $h_\gG^{-1}$ approaching $\gG$
\begin{equation}
\int_{z+\frac \gG 2}^{0} (h_\gG ^{-1}(x-z)+ \gG) \dd x\, =\,
\int_{\frac\gG 2}^{-z} \left(h_\gG ^{-1}(x)+ \gG\right) \dd x
\, \le \, \int_{\frac\gG 2}^{\gG} \left(h_\gG ^{-1}(x)+ \gG\right) \dd x
\, =\, O\left( \gG^2 \right).
\end{equation}
By integrating over $z$, which is required to be smaller that $-\gG/2$ in both terms, and dividing by $C_\gG$ we see that $T_2=O(\gG ^{-\xi +1})$
and $O(\gG \exp(-c\gG/2))$ in the exponential case.
\medskip

For $T_3$ we note that by \eqref{eq:hgG-1b}
\begin{equation}
\label{eq:hgG-1b-2}
0 \le h_\gG ^{-1}(y)-  \hup _\gG^{-1}(y)\,=\,O\left(e^{y-\gG}\right)\, ,
\end{equation}
 when $y\in (-\gG, \gG/2)$.
 We then note that $C_\gG T_3$ is equal to
\begin{multline}
\int_{-\infty}^0 \dd x \int _\bbR \zeta(\dd z) \ind_{(x-\gG/2, x+ \gG)}(z)
\int \ind_{(\hup _\gG ^{-1}(x-z), h_\gG ^{-1}(x-z))}(y-\gG)\nu(\dd y)
\, =\\
\int_{-\infty}^\gG  \zeta(\dd z)  \int _{z-\gG}^{(z+\gG/2)\wedge 0}  \dd x
\int \ind_{(\hup _\gG ^{-1}(x-z), h_\gG ^{-1}(x-z))}(y-\gG)\nu(\dd y)\, .
\end{multline}
We then change variable from $x$ to $x-z$ and ignore the constraint $x \le 0$ to obtain
\begin{equation}
\begin{split}
C_\gG T_3
\, &\le \, 
\int_{-\infty}^\gG  \zeta(\dd z)  \int _{-\gG}^{\gG/2}  \dd x
\int \ind_{(\hup _\gG ^{-1}(x), h_\gG ^{-1}(x))}(y-\gG)\nu(\dd y)
\\
&\le\,   \int _{-\gG}^{\gG/2}  \dd x
\int \ind_{(\hup _\gG ^{-1}(x), h_\gG ^{-1}(x))}(y-\gG)\nu(\dd y)
\\
&=\, \int_{-\infty}^{h_\gG^{-1}(\gG/2)+ \gG} \nu(\dd y) \int_{h_\gG(y-\gG) \wedge (-\gG)}^{\hup_\gG(y-\gG) \vee (\gG/2)} \dd x
\\
&\le \, \int_{-\infty}^{h_\gG^{-1}(\gG/2)+ \gG} \nu(\dd y) \left(\hup_\gG(y-\gG)-h_\gG(y-\gG)\right) \, ,
\end{split}
\end{equation}
and then bounding the last integrand using \cref{eq:hgG-1b-2}, we see that $C_\gG T_3$ is bounded above by a constant times 
\begin{equation}
 \int_{-\infty}^{h_\gG^{-1}(\gG/2)+ \gG}   e^{y-2\gG} \nu(\dd y) \, \le \,
 e^{h_\gG^{-1}(\gG/2)-\gG} \nu \left( (-\infty, h_\gG^{-1}(\gG/2)+ \gG]\right)\, =\, O\left( \gG e^{-\gG/2}\right)
 \,,
\end{equation}
where for the last estimate we observe that  $h_\gG^{-1}(\gG/2) \sim \Gamma/2$ by \cref{eq:almost-id} and use \cref{eq:stationary_asymptotics2} to bound $\nu$.
Therefore $T_3$ is negligible.

\smallskip

By collecting all the estimates, and properly redefining the value of $\gd>0$ in the exponential case, we obtain
\eqref{eq:onestep} and this completes the proof of \Cref{thm:onestep}.
\end{proof}

\appendix

\section{$\kappa_1=\mathrm{var}(\logZ)/4$: A proof}
\label{sec:kappa1=var}

Recall from \eqref{eq:kappa_1_formula} that $\kappa_1= (1/2)\int \log (1 + e^{-x}) \nu(\dd x)$.
\medskip

\begin{proposition}
\label{th:kappa1=var}
Under the assumptions of Theorem~\ref{thm:stationary_asymptotics}, but requiring that $\xi>4$,  we have that
$\int_\bbR \log(1+ e^{-x})\nu (\dd x)= \bbE[ \logZ^2]/2$. 
\end{proposition}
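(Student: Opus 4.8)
The plan is to derive the identity from an \emph{exact} relation satisfied at every finite $\gG$ by the Furstenberg probability $\nu_\gG$, and then to pass to the limit $\gG\to\infty$, exploiting that $\gamma_\gG$ is a good approximation of $\nu_\gG$; this is the point where the hypothesis $\xi>4$ enters, since it is the threshold at which the estimate $\mathrm{d}_{W,1}(\nu_\gG,\gamma_\gG)=O(\gG^{-(\xi-4)})$ of \eqref{eq:W-1bound} becomes effective. By \eqref{eq:kappa_1_formula}, writing $\psi(y):=\log(1+e^{-y})=h(y)-y$ and $\sigma^2:=\bbE[\logZ^2]$, and normalising $\nu$ so that $c_\nu=1$, the claim is equivalent to $\int_\bbR\psi\,\dd\nu=\sigma^2/2$. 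For the exact input, note that the invariant probability $\nu_\gG$ of the chain \eqref{eq:XMC} has finite second moment (because $|X_1|\le\gG+|\logZ_1|\in\bbL^2$); squaring $X_1=\logZ_1+h_\gG(X_0)$, taking expectations with $X_0\sim\nu_\gG$, and using $\bbE[\logZ]=0$, $\bbE[\logZ^2]=\sigma^2$ together with stationarity $\bbE[X_1^2]=\bbE[X_0^2]$, one obtains
\[
\sigma^2\;=\;\int_\bbR\bigl(x^2-h_\gG(x)^2\bigr)\,\nu_\gG(\dd x)\;=\;\int_\bbR\bigl(x-h_\gG(x)\bigr)\bigl(x+h_\gG(x)\bigr)\,\nu_\gG(\dd x)\,,\qquad\text{for every }\gG>0.
\]

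The core of the proof is then the $\gG\to\infty$ analysis of the right-hand side. Since $x-h_\gG(x)=\log\frac{1+e^{x-\gG}}{1+e^{-x-\gG}}=O\!\bigl(e^{-(\gG-|x|)}\bigr)$ on $\{|x|\le\gG\}$, the bulk $\{|x|\le\gG-C\log\gG\}$ contributes $O(1/\gG)$; the far tails $\{|x|>\gG+t_0\}$, with $t_0=t_0(\gG)\to\infty$ chosen as a suitable power of $\gG$, contribute a negligible amount, using $\nu_\gG(|X|>\gG+t)\le\bbP(|\logZ|>t)=O(t^{-\xi})$ and $|x^2-h_\gG(x)^2|=O(\gG|x|+x^2)$ there. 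The essential contribution comes from the two edge windows $x\approx\pm\gG$: recalling from \eqref{eq:iterY} that $h_\gG(-\gG+Y)\to-\gG+h(Y)$ as $\gG\to\infty$, in the left window (variable $Y:=x+\gG$) one has $x^2-h_\gG(x)^2=2\gG\,\psi(Y)-\psi(Y)\bigl(Y+h(Y)\bigr)+O(\gG\,e^{-\gG})$, and symmetrically in the right window with $Y':=\gG-x$ and the increments replaced by $-\logZ$. The delicate leading term $2\gG\,\psi$ needs no approximation: by \eqref{eq:Lyap-k} and \eqref{eq:Lyap-k2}, $\int_{x<0}\log(1+e^{-x-\gG})\,\nu_\gG(\dd x)$ and $\int_{x>0}\log(1+e^{x-\gG})\,\nu_\gG(\dd x)$ both equal $\cL(\gG)+O(e^{-\gG})$, so the full $2\gG\,\psi$-contribution equals $4\gG\,\cL(\gG)+O(\gG e^{-\gG})$. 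For the bounded correction $-\psi(Y)(Y+h(Y))=Y^2-h(Y)^2$ (and the analogous term on the right), one substitutes the explicit $\gamma_\gG$ of \eqref{eq:DHprobability} for $\nu_\gG$ (via Corollary~\ref{th:bound-cor} and \eqref{eq:onestep}), and recalls that on $\{x<0\}$ (resp.\ $\{x>0\}$) the cumulative function of $\gamma_\gG$ equals $F_\lar(x+\gG)/C_\gG$ (resp.\ $1-F_\rar(\gG-x)/C_\gG$) with $C_\gG=2\gG+2\kappa_2+o(1)$ by \eqref{eq:Ck}; since this correction is integrated against a measure of mass $O(1/\gG)$ on the relevant window, it is $o(1)$ (here the finite integrals $\int |Y|\,\psi(Y)\,\nu(\dd Y)$, $\int\psi^2\,\dd\nu$ that arise converge because $\logZ\in\bbL^\xi$ with $\xi>3$). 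Invoking \Cref{th:main} in the form $\cL(\gG)=\kappa_1/(\gG+\kappa_2)+\textsc R(\gG)$ (equivalently, bounding $\cL(\gG)$ through $L_\gG[G_{\gamma_\gG}]$ as in Corollary~\ref{th:bound-cor} and \eqref{eq:Lyap}) gives $\sigma^2=4\kappa_1+o(1)$; as $\sigma^2$ and $\kappa_1$ are constants this yields $\sigma^2=4\kappa_1$, i.e.\ $\int_\bbR\log(1+e^{-x})\,\nu(\dd x)=2\kappa_1=\sigma^2/2=\bbE[\logZ^2]/2$. (One may equally bypass $\cL(\gG)$ and evaluate each edge contribution directly as $\tfrac{2\gG}{C_\gG}\int\psi\,\dd\nu\to\int\psi\,\dd\nu$ on the left and $\to\int\psi\,\dd\nu^-$ on the right, $\nu^-$ being the stationary measure of the $Y$-chain driven by $-\logZ$, and conclude via $\int\psi\,\dd\nu=\int\psi\,\dd\nu^-$ of Remark~\ref{rem:sym3}.)

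The main obstacle is precisely the rigorous passage to the limit in the edge windows: there the integrand $x^2-h_\gG(x)^2$ is \emph{not} uniformly Lipschitz, its Lipschitz constant being of order $\gG$ on account of the factor $2\gG\,\psi$, so one cannot simply bound $\bigl|\int f\,\dd(\nu_\gG-\gamma_\gG)\bigr|$ by $\mathrm{Lip}(f)\,\mathrm{d}_{W,1}(\nu_\gG,\gamma_\gG)$. The decomposition above is designed to isolate this leading term, which is treated \emph{exactly} through \eqref{eq:Lyap-k}--\eqref{eq:Lyap-k2} and the sharp asymptotics of $\cL(\gG)$ and $C_\gG$, while the remaining pieces are genuinely of bounded variation with controlled size and are handled with the $\mathrm{d}_{W,1}$ (equivalently $L^1$) estimate together with the polynomial tail bounds on $\nu_\gG$ and the finiteness of the $\nu$-integrals above. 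Choosing the bulk/edge/tail cutoffs (as suitable powers of $\gG$) so that every error term tends to $0$, and keeping track of the normalisation $C_\gG$ throughout, is the technical heart of the argument; it is exactly this bookkeeping that requires the moment condition $\xi>4$.
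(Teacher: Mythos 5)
Your overall strategy is genuinely different from the paper's: you work at finite $\gG$ with the Furstenberg probability $\nu_\gG$, extract the exact identity $\bbE[\logZ^2]=\int(x^2-h_\gG(x)^2)\,\nu_\gG(\dd x)$ from squaring \eqref{eq:XMC}, and then send $\gG\to\infty$, identifying the edge contributions with $4\gG\,\cL(\gG)$ via \eqref{eq:Lyap-k}--\eqref{eq:Lyap-k2}. The paper never touches $\cL(\gG)$, $\nu_\gG$ or $\gamma_\gG$ here: it applies the stationarity relation \eqref{eq:fornu} for the infinite invariant measure $\nu$ of the $Y$-chain to the truncated test function $g(x)=x\,\ind_{(-\infty,b]}(x)$, rearranges into three terms $I_1=I_2-I_3$, and computes their $b\to\infty$ limits using only the sharp asymptotics \eqref{eq:stationary_asymptotics2} (this is where $\xi>4$ enters, through the error $O(y^{-(\xi-4)})$ in $\int_{(-\infty,y]}x\,\nu(\dd x)=\tfrac12 y^2+C_\nu+R_\nu(y)$).

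The gap is that your route does not close under the stated hypotheses. First, the Proposition assumes only the hypotheses of \Cref{thm:stationary_asymptotics} with $\xi>4$, i.e.\ a one-sided moment condition {\bf (T-1)} on $\logZ^+$ plus {\bf (T-2)}; but $\gamma_\gG$, $F_\rar$, $C_\gG$, \Cref{thm:onestep} and the very definition of $\kappa_1$ in \eqref{eq:kappa_1_formula} all require the assumptions for \emph{both} $\pm\logZ$, which you are not given. Second, and more seriously, your final step needs $\gG\,\cL(\gG)\to\kappa_1=\tfrac12\int\log(1+e^{-y})\,\nu(\dd y)$, i.e.\ $\cL(\gG)=\kappa_1/\gG+o(1/\gG)$. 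Within the paper this is only available for $\xi>5$: \Cref{th:bound-cor} together with \eqref{eq:onestep} gives $\vert\cL(\gG)-L_\gG[G_{\gamma_\gG}]\vert=O(\gG^{4-\xi})$, so after multiplying by $\gG$ the error is $O(\gG^{5-\xi})$, which does not vanish for $\xi\in(4,5]$; and \Cref{th:main} itself assumes {\bf (H-2)} with $\xi>5$. In other words, the factor-$\gG$ obstacle you correctly identify (the leading term $2\gG\psi$ is Lipschitz with constant of order $\gG$, so \eqref{eq:W-1bound} alone is one power of $\gG$ short) is not actually overcome by routing it through $\cL(\gG)$: it reappears as the need to know $\cL(\gG)$ to precision $o(1/\gG)$, which none of the paper's estimates deliver below $\xi=5$. (Importing $\gG\cL(\gG)\to\mathrm{var}(\logZ)/4$ from the literature would make the whole computation superfluous, and is precisely what the appendix is meant to avoid.) So as written your argument proves the identity only under essentially the hypotheses of \Cref{th:main}, not under those of the Proposition; the paper's direct manipulation of $\nu$ is what buys the weaker, one-sided $\xi>4$ condition.
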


\medskip

\medskip

\begin{proof}
For conciseness we treat only the case of assumption {\bf (T-1)} of \Cref{thm:stationary_asymptotics} and we remind the reader that the assumptions
include $\logZ_-\in \bbL^2$, so $\logZ\in \bbL^2$.  Note that these assumptions are weaker than those of our main result, \cref{th:main}.

We recall that \cref{thm:stationary_asymptotics} allows us to choose $\nu$ such that
\begin{equation} 
\label{eq:stationary_asymptotics2}
\nu((-\infty, x])\,=\,   x+ d_\nu + r_\nu(x)\ \text{ with }
r_\nu(x)\,=\begin{cases} O\left(x^{-(\xi-3)}\right) & \text{ for } x \to \infty\\
O(\vert x\vert) & \text{ for } x \to -\infty,
\end{cases}
\end{equation}
with a proper choice of  $d_\nu\in \bbR$. The (very weak) bound for $x\to-\infty$ comes from the boundedness of $F_\nu(x)=\nu((-\infty, x])$ in this limit. In fact, we have $F_\nu(x)=O(|x|^{-2})$ for $x\to -\infty$, as a consequence of
$\logZ_-\in \bbL^2$ and \cref{th:Y}. We remark that $F_\nu(x)=O(|x|^{-2})$ for $x \to -\infty$  and $\nu((-\infty, x])\sim x$ for $x \to \infty$ yield that  $\int_\bbR \log(1+ e^{-x})\nu (\dd x) = \int_\bbR \frac{1}{1+e^{x}} F_\nu(\dd x)$ is finite.

By \eqref{eq:fornu} applied to $g(x)= x \ind_{(-\infty, b]}(x)$ we have that for every $b \in \bbR$
\begin{equation}
\int x  \ind_{(-\infty, b]}(x) \, \nu(\dd x) = \iint \left(z+h(x)\right) \ind_{(-\infty, b]}(z+h(x)) \nu(\dd x) \zeta(\dd z)\,.
\end{equation}
Both sides diverge when $b\to \infty$, so we rearrange to obtain the form $I_1=I_2-I_3$, with:
\begin{equation}
\begin{aligned}
\label{eq:3terms}
I_1 & := \iint   \left(h(x)-x\right) \ind_{(-\infty, b]}(z+h(x))  \, \nu(\dd x) \zeta (\dd z)\, , \\
I_2 & := - \iint  z \, \ind_{(-\infty, b]}(z+h(x)) \nu(\dd x) \zeta (\dd z) \, , \\
I_3 & :=  \iint  \left( \ind_{(-\infty, b]}(z+h(x)) - \ind_{(-\infty, b]}(x) \right) x\, \nu(\dd x) \zeta (\dd z) 
 \, .
\end{aligned}
\end{equation}
We pass to the $b\to \infty$ limit in the three terms $I_1$, $I_2$ and $I_3$ separately. In all three cases we exploit the properties of $h(\cdot)$, in particular that
$h(x)=x+ O(\exp(-x))$ for $x \to \infty$, to replace $h(x)$ with $x$ in the argument of the indicator function. 

For $I_1$ we have
\begin{equation}
\begin{split}
I_1\, &=\, \iint   \log \left(1+e^{-x}\right) \ind_{(-\infty, b]}(z+h(x))  \, \nu(\dd x) \zeta (\dd z)
\\
&=\,
 \int_{(-\infty,b)}\int_{(-\infty,h^{-1}(b-z)]} \log \left(1+e^{-x}\right) \, \nu(\dd x) \zeta (\dd z)\\
 &=\, \bbP(\logZ<b) \int_\bbR  \log \left(1+e^{-x}\right) \nu(\dd x) - \int_{(-\infty, b)}\int_{(h^{-1}(b-z), \infty)} 
 \log \left(1+e^{-x}\right) \, \nu(\dd x) \zeta (\dd z)\,,
 \end{split}
\end{equation}
so, recalling that $\int_\bbR \log(1+ e^{-x})\nu (\dd x)< \infty$,
 \begin{equation}
 \left \vert I_1 -\int_\bbR \log(1+ e^{-x})\nu (\dd x) \right\vert \,=\, O \left(b^{-\xi}\right)
 + \int_{(-\infty, b)}\int_{(h^{-1}(b-z), \infty)} 
 \log \left(1+e^{-x}\right) \, \nu(\dd x) \zeta (\dd z)
 \,  .
 \label{eq:I1_firsttry}
 \end{equation}
The last integral is  bounded from above, separating the case $z\le b/2$ and the case $z\in ( b/2, b)$, by
 \begin{equation}
  \int_{(h^{-1}(b/2), \infty)} 
 e^{-x} \, \nu(\dd x) + \bbP\left(\logZ \in (b/2, b)\right)
  \int_\bbR \log \left(1+e^{-x}\right)\, \nu(\dd x) \,,
 \end{equation}
 and using \cref{eq:stationary_asymptotics2} we see that the first addendum is $O(\exp(-b/3))$ and the second is $O (b^{-\xi})$. 
 Therefore returning to \cref{eq:I1_firsttry}
\begin{equation}
\label{eq:forI1}
I_1\, =\, \int_\bbR \log(1+ e^{-x})\nu (\dd x) + O(b^{-\xi})\, .
\end{equation}

    For the term $I_2$, we start by observing that, since $h(x) > x$ and $h(\R) = (0,\infty)$,
\begin{equation}
\label{eq:2nd-diff}
\int \left( \ind_{(-\infty, b]}(z+x)- \ind_{(-\infty, b]}(z+h(x))\right) \nu(\dd x) \, =\,\begin{cases}\nu \left( \left( h^{-1}(b-z), b-z\right]\right) & \text{ if } z<b,
\\ 
\nu \left( \left( -\infty, b-z\right]\right) & \text{ if } z\ge b, 
\end{cases}
\end{equation}
which by  is $O(b^{-(\xi-3)})$ if $z< b/2$ and $O(b)$ otherwise. 
Therefore 
we see that the expression in 
\eqref{eq:2nd-diff} multiplied by $z$ and integrated with respect to $\zeta( \dd z)$ is $O(b^{-(\xi-3)})$ and so
\begin{equation}
I_2=
- \int z \, \nu \left( \left( -\infty, b-z\right]\right) \zeta (\dd z) + O(b^{-(\xi-3)})=
\bbE[ \logZ^2] - \int z r_\nu(b-z)\zeta(\dd z) + O(b^{-(\xi-3)}),
\end{equation}
where in the second step we used 
\eqref{eq:stationary_asymptotics2}. By separating once again the cases $z<b/2$ and $z\ge b/2$ we see  that
 $\int z r_\nu(b-z)\zeta(\dd z)= O(b^{-(\xi-3)})$. Therefore 
\begin{equation}
I_2\, =\,  \bbE[ \logZ^2]+ O(b^{-(\xi-3)})\, .
\end{equation}

Before handling the estimation of $I_3$, we establish the following estimate: 
\begin{equation}
\label{eq:for3limits}
\int_{(-\infty, y]} x\, \nu(\dd x) \, =\,\frac 12 y^2+C_\nu+ R_\nu(y), \quad  \text{ with } R_{\nu}(y)
= \begin{cases}  O\left(y^{-(\xi-4)}\right) & \text{ for } y \to \infty\,,
\\ O(y^2) & \text{ for } y \to -\infty\,,
\end{cases}
\end{equation}
with $C_\nu$ a real constant that is implicitly defined by \eqref{eq:for3limits} and will be given explicitly below.
In fact the integration by parts formula for measures yields 
\begin{equation}
\label{eq:byparts}
\int_{(-\infty, y]} x\, \nu(\dd x) \, =\, y F_\nu(y)- \int_{-\infty}^y F_\nu(x) \dd x\, ,
\end{equation}
because the identity function $x\mapsto x$ is differentiable, so by \eqref{eq:stationary_asymptotics2} we obtain for 
$y>0$ 
\begin{equation}
\begin{aligned}
\int_{(-\infty, y]} x\, \nu(\dd x) \, &=\, y F_\nu(y) -\int_0^y F_\nu (x) \dd x
-\int_{-\infty} ^0 F_\nu(x) \dd x 
\\
&=\,
\frac {y^2 }2 +y r_\nu(y)+ \int_y^\infty r_\nu(x) \dd x +\left[-\int_{-\infty} ^0 F_\nu(x) \dd x -\int_0^\infty r_\nu(x)  \dd x \right]\, .
\end{aligned}
\end{equation}
Considering the asymptotics for $y \to \infty$ we directly obtain the desired estimate, with $C_\nu$ equal to the expression in square brackets in the last line; note that the terms with $d_\nu$ cancel.
We then complete the proof of 
 \eqref{eq:for3limits} by observing that $F_\nu(x)=O(1/x^2)$ for $x \to- \infty$ yields  $R_\nu(y) \sim -y^2/2$ for $y \to -\infty$.

We now estimate $I_3$. We start by establishing that 
 for a suitable choice of $c>0$, for all $b \ge 1$
\begin{equation}\label{eq:forjsgdj}
\left \vert  \int \left[\ind_{(-\infty, b]}(z+x)-\ind_{(-\infty, b]}(z+h(x))   \right] x\, \nu(\dd x)\right \vert  \le c  \begin{cases}
 b^{-(\xi-4)} & \text{ if } z<b/2\, ,
 \\
 b^2  &\text{ if } b/2 \le z < b \, , \\
 1 &\text{ if }  z \ge  b\,. 
 \end{cases}
\end{equation}
If $z \ge b$ we have that $\ind_{(-\infty, b]}(z+h(x)) =0$ for every $x$ because $h(x)>0$ so 
the term in \eqref{eq:forjsgdj} is bounded by  
$ \int _{(-\infty, b-z]} \vert x\vert \, \nu(\dd x)\le \int _{(-\infty, 0]} \vert x\vert \, \nu(\dd x)< \infty$.  
If instead $z<b$ we have
\begin{equation}
\label{eq:jsgdj}
\begin{aligned}
0 & \,\le \, \int \left[\ind_{(-\infty, b]}(z+x)-\ind_{(-\infty, b]}(z+h(x))   \right] x\, \nu(\dd x) \,\\
 & =\,
 \int _{(h^{-1}(b-z), b-z]} x\, \nu(\dd x)\,\\
 & \leq (b-z) \nu \left( (h^{-1}(b-z), b-z] \right)
\end{aligned}
\end{equation} 
We estimate this last term by considering separately $z<b/2$ and $z \in [b/2, b)$.
In the first case $h^{-1}(b-z)= b-z +O(\exp(-b/2))$ so, by \eqref{eq:stationary_asymptotics2}, 
we see that $\nu \left( (h^{-1}(b-z), b-z]\right)=O(b^{-(\xi-3)})$. In the other case, $z \in [b/2, b)$, we use instead 
$\nu \left( (h^{-1}(b-z), b-z]\right) \le \nu \left( (-\infty, b/2]\right)=O(b)$. 
This yields \eqref{eq:forjsgdj}.

From \eqref{eq:forjsgdj} we easily infer that
\begin{equation}
\label{eq:int-bound5t}
 \iint   \left[\ind_{(-\infty, b]}(z+x)-\ind_{(-\infty, b]}(z+h(x))   \right] x\, \nu(\dd x) \zeta(\dd z)
 \, =\, O \left( b^{-(\xi-4)}\right)\, ,
    \end{equation}
    so  to estimate $I_3$ it suffices to consider 
    \begin{multline}
    \label{eq:forA1}
\iint  \left( \ind_{(-\infty, b]}(z+x)- \ind_{(-\infty, b]}(x) \right) x\, \nu(\dd x) \zeta (\dd z)\stackrel{\eqref{eq:for3limits}}=
\\
\int \left(\frac 12 \left( (b-z)^2-b^2\right)+ R_\nu(b-z) -R_\nu(b) \right)\zeta (\dd z) \, = \, \frac 12 \bbE[ \logZ^2]+ O \left(b^{-(\xi-4)}\right)\,,
    \end{multline}
    where the estimate of the term containing $R_\nu(b-z)$ is performed as before, by separately considering two cases:  for $z> b/2$, we use the fact that  $|R_\nu(b-z)|=O(z^2)$,   obtaining a bound $O(b^{-(\xi-2)})$; whereas for $z \le  b/2$, we obtain $O(b^{-(\xi-4)})$. Therefore 
\begin{equation}
I_3\, =\, \frac{1}{2} \bbE[ \logZ^2]+ O \left(b^{-(\xi-4)}\right)\, .
\end{equation}

By putting together the estimates on $I_1$, $I_2$ and $I_3$ we complete the proof of \Cref{th:kappa1=var}.
\end{proof}

\section*{Acknowledgements and funding}
{We are very grateful to an anonymous referee for several comments that helped improving our work and for pointing out to us the exact solution in Remark~\ref{rem:exact}.}
The work of R.L.G.\ was supported in part by the MIUR Excellence Department Project MatMod@TOV awarded to the Department of Mathematics, University of Rome Tor Vergata, CUP E83C23000330006. G.G.\ acknowledges the support of the Cariparo Foundation. Y.H.\ acknowledges the support of ANR LOCAL.

\section*{Declarations}
The authors have no competing interests to declare that are relevant to the content of this article.


\begin{thebibliography}{99}

 
 \bibitem{cf:Asm} S.~Asmussen, \emph{Applied Probability and Queues}, Second edition,  Springer-Verlag, New York, 2003.
 
 \bibitem{cf:BBE97} M.~Babillot, P.~Bougerol and L.~Elie, 
\emph{The random difference equation $X_n=A_n X_{n-1}+B_n$ in the critical case},
Ann. Probab. {\bf 25}(1997),  478-493.


 
 
 \bibitem{cf:BL} 
P.~Bougerol and J.~Lacroix, 
\emph{Products of random matrices with applications to Schr\"odinger operators}, 
Progress in Probability and Statistics, {\bf 8}. Birkh\"auser, 1985.

\bibitem{cf:BDMbook}
  D.~Buraczewski, E.~Damek and T.~Mikosch, \emph{Stochastic models with power-law tails. The equation X=AX+B}, Springer Series in Operations Research and Financial Engineering. Springer,  2016.    

\bibitem{cf:Carlsson}
H.~Carlsson, 
\emph{Remainder term estimates of the renewal function},
Ann. Probab. {\bf 11} (1983), 143-157.
 
 \bibitem{cf:Orphee} O. Collin, \emph{
On the large interaction asymptotics of the free energy density of the Ising chain with disordered centered external field}, 
arXiv:2403.19626

 \bibitem{cf:Othesis}  O. Collin, \emph{Random field Ising chain and two-dimensional random
interlacements}, PhD thesis manuscript (2025).

\bibitem{cf:CGH1} O. Collin, G. Giacomin and Y. Hu, \emph{Infinite disorder renormalization fixed point for the continuum random field Ising chain},
Probab. Theory Related Fields   {\bf 190} (2024), 
881-939.

\bibitem{cf:CGH2} O. Collin, G. Giacomin and Y. Hu, \emph{The random field Ising chain domain-wall structure in the large interaction limit},
arXiv:2401.03927

\bibitem{cf:CGG} F. Comets, G. Giacomin and R. L. Greenblatt, \emph{Continuum limit of random matrix products in statistical mechanics of disordered systems}, Commun. Math. Phys. {\bf 369} (2019), 171-219. 

\bibitem{cf:CLTT2013}
A.~Comtet,  J.-M.~Luck, C.~Texier and
Y.~Tourigny, \emph{The Lyapunov exponent of products of random
$2\times 2$ matrices close to the identity}, J. Statist. Phys. {\bf 150} (2013), 13-65.

\bibitem{CPV}
A. Crisanti, G. Paladin and A. Vulpiani, 
\emph{Products of random matrices in statistical physics},  Springer Series in Solid-State Sciences {\bf 104}, 1993.

\bibitem{cf:footprint}
J.~De Moor, C.~Sadel and H.~Schulz-Baldes, \emph{
Footprint of a topological phase transition on the density of states},
Lett. Math. Phys.   {\bf 113} (2023), Paper No. 96, 29 pp.

\bibitem{cf:DMSDS-B}
J.~De Moor, C.~Sadel and H.~Schulz-Baldes, \emph{Scaling of the Lyapunov exponent at a balanced hyperbolic critical point}, 	arXiv:2411.11063

\bibitem{cf:DH83} Derrida and H.J. Hilhorst, \emph{Singular behaviour of certain infinite products of random $2\times2$ matrices},  J. Phys. A {\bf 16} (1983), 2641-2654.

\bibitem{cf:Doney80}
R.~A.~Doney, 
 \emph{Moments of ladder heights in random walks},
J. Appl. Probab. {\bf 17} (1980),  248-252.

\bibitem{cf:MC} R. Douc, E. Moulines, P. Priouret and P. Soulier, \emph{Markov Chains}, Springer Series in Operation Research and Financial Engineering, Springer, 2018.


\bibitem{cf:F95} D. S. Fisher, \emph{Critical behavior of random transverse-field Ising spin chains},  Phys. Rev. B  {\bf 51}  (1995), 6411-6461. 

\bibitem{cf:FLDM01}
 D.~S.~Fisher, P.~Le Doussal and C.~Monthus, \emph{Nonequilibrium dynamics of random field Ising spin chains: exact results via real space renormalization group}, Phys. Rev. E  {\bf 64} (2001),  066107, 41 pp..

\bibitem{cf:GGG17} 
G. Genovese, G.Giacomin and R. L.  Greenblatt, \emph{Singular behavior of the leading Lyapunov exponent of a product of random $2 \times 2$ matrices}, Commun. Math. Phys.  {\bf 351} (2017), 923-958.


\bibitem{cf:GG22} G. Giacomin and R. L. Greenblatt, \emph{Lyapunov exponent for products of random Ising transfer matrices: the balanced disorder case}, ALEA {\bf 19} (2022), 701-728.

\bibitem{Goldie-Grubel} C.M. Goldie and R. Gr\"{u}bel, \emph{Perpetuities with thin tails}, Adv. Appl. Prob. {\bf 28} (1996), 463-480.


\bibitem{Gut} A. Gut, \emph{Stopped random walks. Limit theorems and applications.}  2nd ed. Springer Series in Operations Research and Financial Engineering. (2009) New York, NY,  Springer.

\bibitem{cf:IM} F.~Igl\'oi and C.~Monthus, \emph{Strong disorder RG approach – a short review of recent developments}, Eur. Phys. J. B {\bf 91}, (2018) article number 290.

\bibitem{cf:IS} R.\ Ibragimov and Sh.\ Sharakhmetov, \emph{The best constant in the Rosenthal inequality for nonnegative random variables}, Stat. Probabil. Lett. {\bf 55} (2001), 367-376.

\bibitem{cf:KestenAM} 
H.~Kesten, \emph{Random difference equations and renewal theory for products of random matrices}, 
Acta Math. {\bf 131} (1973), 207-248.


\bibitem{cf:Luck} J. M. Luck, {Sys\`emes d\'esordonn\'es unidimensionnels}, Publications CEA Saclay, 1992.

\bibitem{cf:MW1} B. M. McCoy  and T. T. Wu,
\emph{Theory of a two-dimensional {I}sing model with random impurities. {I}. Thermodynamics}. Phys. Rev. \textbf{176} (1968), 631-643.


\bibitem{cf:MWbook}
B. M. McCoy  and T. T. Wu, 
\emph{The Two-Dimensional Ising Model}. Harvard University Press, 1973.



\bibitem{cf:QM} Q.~Moulard, \emph{Absence of Griffiths singularities for smoothly disordered statistical mechanics systems}, Master thesis, Univeit\'e de Saclay, 2023.

\bibitem{cf:NL86}
T. M. Nieuwenhuizen and J.-M. Luck, \emph{Exactly soluble random field Ising models in one dimension}, J. Phys. A, {\bf 19} (1986), 1207-1227. 
 
\bibitem{cf:Rogozin}
 B.~A.~Rogozin,
 \emph{Asymptotic analysis of the renewal function},
Theory Probab. Appl. {\bf 21} (1976),  669-686.

\bibitem{cf:Ruelle}
 D.~Ruelle,
\emph{Analycity properties of the characteristic exponents of random matrix products}.
\newblock \emph{Adv. Math} \textbf{32} (1979), 68-80.

\bibitem{cf:Stone}
C.~Stone,
\emph{On Absolutely Continuous Components and Renewal Theory}.
\newblock \emph{Annals Math. Stat.}  \textbf{37} (1966), 271-275.

\end{thebibliography}
\end{document}